\newtheorem{thm}{Theorem}[section]
\newtheorem{cor}[thm]{Corollary}
\newtheorem{lem}[thm]{Lemma}
\newtheorem*{Claim1}{Claim 1}
\newtheorem*{Claim2}{Claim 2}
\theoremstyle{definition}
\newtheorem{defn}[thm]{Definition}
\newtheorem{rem}[thm]{Remark}
\newtheorem{question}[thm]{Question}
\numberwithin{equation}{section}
\numberwithin{figure}{section}
\DeclareMathOperator{\supp}{supp}
\newcommand{\eps}{\varepsilon}
\begin{document}
\title[Mean Li-Yorke chaos along some good sequences]
{Mean Li-Yorke chaos along some good sequences}

\author{Jian Li}
\address[J.~Li]{Department of Mathematics, Shantou University, Shantou, Guangdong 515063, P.R. China}
\email{lijian09@mail.ustc.edu.cn}

\author{Yixiao Qiao}
\address[Y.~Qiao]{Department of Mathematics, University of Science and Technology of China,
Hefei, Anhui 230026, P.R. China
-- \and --
Institute of Mathematics, Polish Academy of Sciences, ul. \'Sniadeckich 8, 00-656
Warsaw, Poland}
\email{yxqiao@mail.ustc.edu.cn}
\date{\today}

\subjclass[2010]{37B35, 37A30, 54H20}

\keywords{Mean Li-Yorke chaos, sequence chaos,
positive entropy, pointwise ergodic convergence,
prime sequence, polynomial, Pinsker $\sigma$-algebra}

\begin{abstract}
If a topological dynamical system $(X,T)$ has positive topological entropy,
then it is multivariant mean Li-Yorke chaotic along
a sequence $\{a_k\}_{k=1}^\infty$ of positive integers which
is ``good'' for pointwise ergodic convergence with a mild condition;
more specifically,
there exists a Cantor subset $K$ of $X$ such that
for every $n\ge2$ and pairwise distinct points $x_1,x_2,\dotsc,x_n$ in $K$ we have
\[\liminf_{N\to\infty}\frac{1}{N}\sum_{k=1}^N\max_{1\leq i<j\leq n}
 d(T^{a_k}x_i,T^{a_k}x_j)=0\]
and
\[\limsup_{N\to\infty}\frac{1}{N}\sum_{k=1}^N\min_{1\leq i<j\leq n}
d(T^{a_k}x_i,T^{a_k}x_j)>0.\]
Examples are given for the classic sequences of primes and generalized polynomials.
\end{abstract}

\maketitle

\section{Introduction}
In investigations of topological dynamical systems,
different versions of chaos, which represent complexity in various senses,
attract a lot of attention over the past few decades.
The most studied chaos are Devaney chaos, Li-Yorke chaos, distributional chaos,
positive entropy and weak mixing.
The relationship among them naturally became a central topic as well.
It was known that weak mixing implies Li-Yorke chaos \cite{I}.
In 2002, Huang and Ye \cite{HY02} showed that Devaney chaos implies Li-Yorke chaos.
Later, using ergodic theoretic method,
Blanchard, Glasner, Kolyada and Maass \cite{BGKM} proved that
positive topological entropy also implies Li-Yorke chaos.
A combinatorial proof was given by Kerr and Li \cite{KL07}.

Many of the classical notions in topological dynamical systems
have an analogous version in the mean sense.
Downarowicz \cite{D} observed that mean Li-Yorke chaos
is equivalent to the so-called DC2 chaos
and proved that positive topological entropy implies mean Li-Yorke chaos,
which strengthens the result of \cite{BGKM}.
Huang, Ye and the first author \cite{HLY14} provided a different approach and
showed that positive topological entropy implies a
multivariant version of mean Li-Yorke chaos.
For related topics on chaotic behaviours
we refer to \cite{KL13,HXY15,HJ15} for general group actions,
\cite{GJ} for a new condition implying mean Li-Yorke chaos,
\cite{BH08,FHYZ12,H09} for more careful discussions,
and the survey article \cite{LY16} for more aspects and details.

\medskip

It is natural to study ``sequence versions'' of dynamical notions
(e.g., sequence entropy).
In order to make our statement explicit,
we restate the main theorem of \cite{HLY14} in the following,
which concerns the terminology of multivariant
mean Li-Yorke chaos.
By a \emph{topological dynamical system},
we mean a pair $(X,T)$, where $X$ is a compact metric space
equipped with a metric $d$
and $T$ is a homeomorphism of $X$ onto itself.

\begin{thm}[{\cite[Theorem 1.1]{HLY14}}]\label{thm:result-HLY14}
If a topological dynamical system $(X,T)$ has positive topological entropy,
then it is multivariant mean Li-Yorke chaotic; namely, there exists
a Cantor subset $K$ of $X$ such that
for every integer $n\geq2$ and pairwise distinct points $x_1,x_2,\dotsc,x_n$ in $K$ it holds that
\[\liminf_{N\to\infty}\frac{1}{N}\sum_{k=1}^N\max_{1\leq i<j\leq n} d(T^kx_i,T^kx_j)=0\]
and
\[\limsup_{N\to\infty}\frac{1}{N}\sum_{k=1}^N\min_{1\leq i<j\leq n} d(T^kx_i,T^kx_j)>0.\]
\end{thm}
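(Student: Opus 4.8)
\smallskip
\noindent\textit{Proof proposal.}
By the variational principle for entropy together with the ergodic decomposition, positive topological entropy yields an \emph{ergodic} $T$-invariant Borel probability measure $\mu$ with $h_\mu(T)>0$; then $\mu$ is non-atomic, so $\supp\mu$ is perfect. The plan is to produce $K$ by a Mycielski/Kuratowski--Mycielski-type argument inside a suitable perfect compact space $Z\subseteq X$: one builds nested closed sets $F_w$ indexed by finite $0$--$1$ words, with $F_{w0}\cup F_{w1}\subseteq F_w$, $F_{w0}\cap F_{w1}=\emptyset$ and $\diam F_w\to0$, sets $K=\bigcap_m\bigcup_{|w|=m}F_w$, and it is enough to show that the two sets of $n$-tuples occurring in the statement --- those with $\liminf_N\frac1N\sum_{k\le N}\max_{i<j}d(T^kx_i,T^kx_j)=0$ and those with $\limsup_N\frac1N\sum_{k\le N}\min_{i<j}d(T^kx_i,T^kx_j)>0$ --- are both residual in $Z^n$ for every $n\ge2$; then one Cantor set $K$ with $K^n\setminus\Delta_n$ in the intersection works for all $n$ simultaneously. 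Both sets are $G_\delta$, so the task is to prove density.

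The ergodic-theoretic input comes from the Pinsker factor. Let $\pi\colon(X,\mu,T)\to(Y,\nu,S)$ be the factor corresponding to the Pinsker $\sigma$-algebra $\mathcal P_\mu$, so $Y$ has zero entropy and, since $h_\mu(T)>0$, $\pi$ is a proper factor; disintegrate $\mu=\int_Y\mu_y\,d\nu(y)$. The extension $\pi$ is relatively Kolmogorov, so for every $n\ge2$ the relatively independent product $\lambda_n=\int_Y\mu_y^{\,n}\,d\nu$ on $X^n$ is invariant and ergodic under $T^{(n)}=T\times\cdots\times T$; and since $h_\mu(T\mid\mathcal P_\mu)=h_\mu(T)>0$, the fibres $\mu_y$ are non-atomic for $\nu$-a.e.\ $y$, and, fixing a finite partition $\alpha$ with $h_\mu(T,\alpha)>0$, the name cells $\bigcap_{k=M}^{M+L}T^{-k}\alpha$ are uncountable on a set of full $\mu$-measure, uniformly in $M$. (The space $Z$ of the first paragraph is extracted from this fibre structure, e.g.\ $Z=\supp\mu_{y_0}$ for a suitably chosen $y_0$.) The mean-distal direction is now soft: non-atomicity of the $\mu_y$ gives $c_n:=\int_{X^n}\min_{i<j}d(u_i,u_j)\,d\lambda_n>0$, so by the pointwise ergodic theorem for the ergodic system $(X^n,\lambda_n,T^{(n)})$ and the function $\min_{i<j}d$, a $\lambda_n$-conull set of tuples has $\frac1N\sum_{k\le N}\min_{i<j}d(T^kx_i,T^kx_j)\to c_n$; this makes the ($G_\delta$) set of tuples with $\limsup_N\frac1N\sum_{k\le N}\min_{i<j}d(T^kx_i,T^kx_j)>0$ residual in $Z^n$.

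The mean-proximal direction is the crux and is where positive entropy is genuinely used. Given a nonempty basic open box $U_1\times\cdots\times U_n$ in $Z^n$, it suffices to find pairwise distinct points $x_i\in U_i$ and integers $M\uparrow\infty$, $L$ with $M/(M+L)\to0$, such that all the $x_i$ share a common name, with respect to a partition of mesh $\eps$, along the far block $[M,M+L]$; then $\frac1{M+L}\sum_{k\le M+L}\max_{i<j}d(T^kx_i,T^kx_j)\le\frac{M}{M+L}\diam X+\eps$, which is as small as we please, so the liminf is $0$. Producing such a tuple inside the prescribed box is exactly where positive entropy enters: by a Rokhlin-tower/quasi-independence estimate over the zero-entropy base $Y$, the requirement of a prescribed common itinerary along the far block $[M,M+L]$ is almost independent of the local data, so it is met on a set of positive measure inside the box, and the corresponding name cell, being uncountable by the non-atomicity above, contains arbitrarily many distinct points; moreover, because the blocks $[M_l,M_l+L_l]$ recede to infinity, in the limit each fixed time $k$ is constrained by only finitely many levels, so the points of $K$ remain pairwise distinct and the construction still has the slack needed to also meet the mean-distal requirement. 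Putting the two residual sets together and invoking the Kuratowski--Mycielski theorem in $Z$ then produces the desired Cantor set $K$. The main obstacle is exactly this last orchestration --- running \emph{one} inductive scheme that realises the ``far long common block'' pattern ensuring mean proximality along a common sequence $N_l=M_l+L_l$ for all $n$ at once, while staying generic enough for mean distality and shrinking diameters --- whereas the reduction to an ergodic measure and the mean-distal estimate are routine.
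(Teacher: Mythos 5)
Your overall architecture coincides with the paper's (which simply specializes its Theorem \ref{thm-1} to $a_k=k$, following \cite{HLY14}): reduce to an ergodic $\mu$ with $h_\mu(T)>0$ by the variational principle, disintegrate over the Pinsker $\sigma$-algebra, use ergodicity of the relatively independent product $\lambda_n=\int\mu_y^{(n)}\,d\nu$ and non-atomicity of the fibres plus Birkhoff to make the mean-distal set residual in $Z^n$ with $Z=\supp\mu_{y_0}$, and finish with Mycielski. That half of your argument is essentially the paper's, and your closing worry about running ``one inductive scheme for all $n$ at once'' is unnecessary: once both sets are dense $G_\delta$ in $Z^n$ for every $n$, Theorem \ref{Myc} yields a single Cantor set for all $n$ simultaneously, and no common sequence of times across tuples is needed because the $\liminf$/$\limsup$ conditions are per tuple.

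The genuine gap is the mean-proximal half, which is exactly where positive entropy must do work and which you leave as an assertion. You reduce density of the set $MP_n$ to the claim that, for every box $U_1\times\cdots\times U_n$ charged by $\mu_{y_0}$ and a suitable far long block $[M,M+L]$, one $\eps$-name $w$ is realized by points of every $U_i\cap Z$, and you justify this by ``a Rokhlin-tower/quasi-independence estimate over the zero-entropy base''. As stated this is not a proof: (i) the relative-mixing statement you need (far-name $\sigma$-algebras becoming independent of a fixed set relative to the Pinsker algebra) requires the Rokhlin--Sinai $\sigma$-algebra together with a decreasing-martingale argument, and it must then be transferred from an integrated (in $y$) statement to the single fibre $\mu_{y_0}$; (ii) even granting approximate relative independence of each $U_i$ from the name partition of the block, this does not by itself produce one common cell $C_w$ with $\mu_{y_0}(U_i\cap C_w)>0$ for \emph{all} $i$ simultaneously, since the block-name partition has exponentially many cells of small conditional measure, so the independence error must be controlled summed over the cells, not cell by cell. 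Closing (i)--(ii) is essentially equivalent to proving the lemma the paper invokes instead, namely \cite[Lemma 3.1]{HLY14} (Lemma \ref{lem:HYL14} here): for $\mu$-a.e.\ $x$ one has $\overline{W^s(x,T)\cap\supp(\mu_x)}=\supp(\mu_x)$. With that lemma the proximal direction is immediate, since any tuple inside one stable set is asymptotic, so $MP_n\cap\supp(\mu_x)^n$ is dense $G_\delta$; its proof (in \cite{HLY14}, in the spirit of Blanchard--Host--Ruette) is precisely the nontrivial content you are sketching. So either quote/prove that lemma or carry out your block construction in full detail; as written, the crucial step is asserted rather than established.
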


Motivated by the above consideration and results we naturally ask
if ``sequence chaos'' appears in positive entropy systems;
more precisely, we investigate the following question.

\begin{question}\label{ques}
Let $\{a_k\}_{k=1}^\infty$ be a sequence of positive integers.
Suppose that $(X,T)$ is a topological dynamical system with positive entropy.
Is there a Cantor (in particular, uncountable) subset $K$ of $X$ such that
for every integer $n\geq2$ and pairwise distinct points $x_1,x_2,\dotsc,x_n$ in $K$
it holds that
\[\liminf_{N\to\infty}\frac{1}{N}\sum_{k=1}^N\max_{1\leq i<j\leq n} d(T^{a_k}x_i,T^{a_k}x_j)=0\]
and
\[\limsup_{N\to\infty}\frac{1}{N}\sum_{k=1}^N\min_{1\leq i<j\leq n} d(T^{a_k}x_i,T^{a_k}x_j)>0\;?\]
\end{question}

As it was shown in previous results (e.g., Theorem \ref{thm:result-HLY14}),
Birkhoff's pointwise ergodic theorem
plays a key role. To study Question \ref{ques},
we would follow this idea. However,
for general sequences of positive integers
the situation of pointwise ergodic convergence becomes much more complicated.
Fortunately, there were classic pointwise ergodic theorems for subsequences
of positive integers as well (see Section 3.3 for details), which are quite useful for our argument.
It is worth mentioning that the proofs of these ergodic theorems are extremely nontrivial.
All these facts lead to the following restrictions on sequences.

\medskip

\begin{defn}
An increasing sequence $\{a_k\}_{k=1}^\infty$ of positive integers is
\emph{pointwise good} if for each measure preserving system $(X,\mathcal{B},\mu,T)$
and $f\in L^2(\mu)$,
\[\frac{1}{N}\sum_{k=1}^{N}f(T^{a_k}x)\]
converges for $\mu$-a.e. $x\in X$.
\end{defn}

There are a lot of pointwise good sequences.
In a series of papers, Bourgain \cite{B88a,B88,B89} proved that $\{[p(k)]\}$ is pointwise good provided
$p(x)$ is a polynomial of real coefficients,
where $[\,\cdot\,]$ denotes the integer part of a real number.
This result was generalized to some logarithmico-exponential subpolynomials
\cite{BKQW05}.
The sequence of primes \cite{B88a} and
many return time sequences \cite{BR69,B89,L92} are also pointwise good.
We refer to \cite{J11,PS95} for a more comprehensive understanding of
pointwise ergodic theorems.

If a sequence is pointwise good,
then the ergodic average along this sequence converges almost everywhere.
Nonetheless, we do not have a further understanding of the value of the limit.
Following \cite{L92},
we say that a sequence $\{a_k\}$ is \emph{very good} if for any ergodic system
$(X,\mathcal{B},\mu,T)$ and $f\in L^2(\mu)$, it holds that
\[\lim_{N\to\infty} \frac{1}{N}\sum_{k=1}^{N}f(T^{a_k}x)=\int fd\mu
\quad \text{ for }\, \mu\text{-a.e. }\, x\in X.\]
In this terminology, Birkhoff's pointwise ergodic theorem states
that the sequence of natural numbers is very good.
Following the ideas in \cite{HLY14},
it is routine to check that Theorem \ref{thm:result-HLY14} also holds along any very good sequence;
namely, the answer to Question \ref{ques} is affirmative for every very good sequence.
For example, it was shown in \cite{L92} that the Morse sequence is very good
and in \cite{BKQW05} that sequences generalized by some logarithmico-exponential subpolynomials
(including sequences $\{[k^r]\}$ for all non-integer numbers $r>0$)
are very good.

Unfortunately, there are only few pointwise good sequences which are verified to be very good.
Comparing to the unsolved cases of Question \ref{ques},
this is far from satisfying.
To deal with the remaining cases, we follow the idea in \cite{FW96}
to study the limit of ergodic average along sequences.

\begin{defn}
A sub-$\sigma$-algebra $\mathcal{F}$ of $\mathcal{B}$ is
a \emph{characteristic $\sigma$-algebra for the sequence $\{a_k\}$}
if for every $f\in L^2(\mu)$,
\begin{equation*}
\frac{1}{N}\sum_{k=1}^{N} T^{a_k}f-
\frac{1}{N}\sum_{k=1}^{N} T^{a_k}E(f|\mathcal{F})\to0
\end{equation*}
as $N\to\infty$ in $L^2(\mu)$,
where $E(\cdot|\mathcal{F})$ is the conditional expectation with respect to $\mathcal{F}$.
\end{defn}

As we will show in Theorem \ref{thm-factor-Pinkser},
the Pinsker $\sigma$-algebra of a system is characteristic for
all sequences $\{a_k\}$ satisfying Condition ($*$) below,
which helps us overcome additional difficulties in the proof of our main theorem.

\medskip

\noindent\textbf{Condition ($*$).}
\,For every $L>0$,
\[\lim_{N\to\infty}\frac{1}{N^2}\#\{(i,j)\in[1,N]^2:\,|a_i-a_j|\leq L\}=0.\]

\medskip

We note here that Condition ($*$) is quite mild.
In fact, it is not hard to check that any strictly increasing sequence of positive integers
satisfies Condition ($*$) (see Lemma \ref{lem:sequence-condition-star}),
and that all those pointwise good sequences we previously mentioned
satisfy Condition ($*$) as well
(see Remark \ref{rem:good-sequences-examples}).
Moreover, for any pointwise good sequence satisfying Condition ($*$),
the limit of ergodic average is a constant on each atom of the generating partition of
the Pinsker $\sigma$-algebra.

\medskip

We now state the main result of this paper in the following,
solving Question \ref{ques} affirmatively for a large class of sequences.

\begin{thm}[Main theorem]\label{thm-main-result}
Let $\{a_k\}$ be a pointwise good sequence satisfying Condition ($*$).
Suppose that $(X,T)$ is a topological dynamical system with positive topological entropy.
Then $(X,T)$ is multivariant mean Li-Yorke chaotic along the sequence $\{a_k\}$;
namely, there exists a Cantor subset $K$ of $X$ such that
for every integer $n\geq2$ and pairwise distinct points $x_1,x_2,\dotsc,x_n$ in $K$
we have
\[\liminf_{N\to\infty}\frac{1}{N}\sum_{k=1}^N\max_{1\leq i<j\leq n} d(T^{a_k}x_i,T^{a_k}x_j)=0\]
and
\[\limsup_{N\to\infty}\frac{1}{N}\sum_{k=1}^N\min_{1\leq i<j\leq n} d(T^{a_k}x_i,T^{a_k}x_j)>0.\]
\end{thm}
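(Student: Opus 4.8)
The plan is to adapt the ergodic-theoretic strategy of \cite{HLY14} to the sequence setting, with the Pinsker $\sigma$-algebra playing the role that the full Borel $\sigma$-algebra (via Birkhoff's theorem) plays for very good sequences. First I would fix an ergodic $T$-invariant Borel probability measure $\mu$ on $X$ with positive measure-theoretic entropy $h_\mu(T)>0$; such a measure exists by the variational principle. Let $\mathcal{P}$ denote the Pinsker $\sigma$-algebra of $(X,\mathcal{B},\mu,T)$. Since $h_\mu(T)>0$, the system is not measure-theoretically isomorphic to its Pinsker factor, so there is a set of positive measure witnessing nontrivial relative behaviour; more precisely, the plan is to use the disintegration $\mu=\int \mu_x\,d\mu(x)$ over $\mathcal{P}$ and the fact that, by the Rokhlin--Sina\u{\i} theory, the conditional measures $\mu_x$ are non-atomic on a set of full measure whenever entropy is positive. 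This non-atomicity is exactly what produces ``many'' pairs of points that are close in the ergodic average (the $\liminf=0$ condition), while the characteristic-$\sigma$-algebra property from Theorem~\ref{thm-factor-Pinkser} ensures that the ergodic averages along $\{a_k\}$ see only the $\mathcal{P}$-measurable data, which is the same for $\mu_x$-typical points sitting over a common Pinsker atom.

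Next I would carry out the construction of the Cantor set $K$ by a Mycielski-type argument. The standard device (as in \cite{HLY14} and \cite{BGKM}) is to produce, for each pair of a positive lower density of indices, a dense $G_\delta$ or positive-measure set of tuples with the desired asymptotic behaviour, and then invoke the Mycielski theorem to extract a Cantor set all of whose finite tuples lie in the relevant residual/conditionally generic set. Concretely: (i) for the $\limsup>0$ side, I would use that the conditional measures $\mu_x$ are non-atomic to find, for a suitable $\delta_0>0$, a set of positive $\mu$-measure of pairs $(y,z)$ with $\limsup_N \frac1N\sum_{k=1}^N d(T^{a_k}y,T^{a_k}z)>\delta_0$; here one needs a quantitative input, and the natural route is to pick a partition $\mathcal{Q}$ of $X$ into small-diameter sets that is ``independent'' along $\{a_k\}$ in the relative (over $\mathcal{P}$) sense, so that for a positive-density set of $k$ the points $T^{a_k}y$ and $T^{a_k}z$ land in different atoms of $\mathcal{Q}$. (ii) For the $\liminf=0$ side, I would use the characteristic-$\sigma$-algebra property: since $E(f\mid\mathcal{P})$ governs the limit, and $\mu_x$-typical pairs over a common atom agree on all $\mathcal{P}$-measurable functions, one gets that $\frac1N\sum_{k=1}^N d(T^{a_k}y,T^{a_k}z)$ has a subsequence tending to $0$ along a density-one set of $N$; passing through a countable dense family of continuous functions and a diagonal argument upgrades this to $\liminf=0$ for all tuples from $K$.

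The transition from pairs to $n$-tuples is handled as in \cite{HLY14}: for the $\max$ over pairs one only needs each individual pair to contribute smallness infinitely often in density, which is preserved under finite maxima only if the ``small'' index sets can be taken to overlap in density; the cleaner route, and the one I would follow, is to work with a single well-chosen Cantor set and show that for $\mu^{(n)}_{\mathcal{P}}$-generic $n$-tuples over a common Pinsker atom all pairwise distances are simultaneously small along a common density-one sequence of $N$, which follows from the $L^2$ convergence in the definition of characteristic $\sigma$-algebra applied to the distance-type functions on the product system. For the $\min$ side, genericity of the independence (non-atomicity) again guarantees that for a positive-density set of $k$ \emph{all} the points $T^{a_k}x_i$ are pairwise separated. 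Finally, Mycielski's theorem applied to the residual set of ``good tuples'' in $X^n$ for every $n$ simultaneously yields the single Cantor set $K$.

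The main obstacle I anticipate is the $\limsup>0$ estimate along the sparse sequence $\{a_k\}$: for very good sequences one simply invokes convergence of $\frac1N\sum f(T^{a_k}x)$ to $\int f\,d\mu$, but here we only know convergence to $E(f\mid\mathcal{P})$ in $L^2$, with no control of the limit's value, so positivity of entropy must be converted into a genuinely \emph{relative} mixing or independence statement over $\mathcal{P}$ that survives averaging along $\{a_k\}$. The key technical point will be to show that positive entropy yields a partition $\mathcal{Q}$ and a set $A$ with $\mu(A)>0$ such that along $\{a_k\}$ a positive lower density of the times $k$ satisfy $T^{a_k}\mu_x(\mathcal{Q}\text{-atoms})$ spread out — this is where Condition ($*$) and the characteristic property of the Pinsker $\sigma$-algebra must be combined, presumably by an argument showing that on the Pinsker factor the maps $T^{a_k}$ behave like an equidistributed family (so the ergodic average of any non-$\mathcal P$-measurable observable cannot concentrate), while on the fibres the entropy forces a definite amount of dispersion. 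Once that quantitative positivity is in hand, the rest of the argument is the standard Mycielski machinery.
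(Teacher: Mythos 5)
Your mechanism for the $\liminf=0$ half does not work. You claim that because $\mu_x$-typical pairs over a common Pinsker atom ``agree on all $\mathcal{P}$-measurable data'', the characteristic-$\sigma$-algebra property forces $\frac{1}{N}\sum_{k=1}^{N}d(T^{a_k}y,T^{a_k}z)$ to become small along a density-one set of $N$. It does not: the characteristic property only lets you replace an observable $f$ by $E(f\mid P_{\lambda}(T^{(2)}))$ inside the average, and the conditional expectation of the distance function on the product system is in general bounded away from zero off the diagonal. A K-system makes this concrete: the Pinsker algebra is trivial, every pair sits over the same atom, and for $\mu\times\mu$-typical $(y,z)$ the averages converge to $\int\!\!\int d(y,z)\,d\mu(y)\,d\mu(z)>0$. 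Indeed the paper proves exactly the opposite statement about fibre-typical tuples: the limit of the min-distance averages is a \emph{positive} constant $\eta_{x,n}$ for $\mu_x^{(n)}$-a.e.\ tuple --- that is the $\limsup>0$ half, not the $\liminf=0$ half. Mean proximality must come from non-typical tuples, namely tuples inside a common stable set: if $x_1,\dotsc,x_n\in W^s(x,T)$ then $d(T^{a_k}x_i,T^{a_k}x_j)\to0$ since $a_k\to\infty$, and Lemma \ref{lem:HYL14} (i.e.\ \cite[Lemma 3.1]{HLY14}) gives density of $W^s(x,T)\cap\supp(\mu_x)$ in $\supp(\mu_x)$; combined with the $G_\delta$ structure of the set $MP_n(X,T)$ this makes the mean-proximal tuples residual in $\supp(\mu_x)^n$. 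Your proposal never invokes stable sets or any substitute, so this half of the argument is missing, and what you put in its place is false.

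For the $\limsup>0$ half you correctly identify the obstruction (no control of the value of the limit along $\{a_k\}$), but you leave it unresolved and propose to prove a relative independence/equidistribution statement over the Pinsker factor; that is a substantial unproved claim, and it is not what is needed. The paper's route avoids any quantitative relative mixing: form $\lambda_n=\int\mu_x^{(n)}\,d\mu(x)$, which is $T^{(n)}$-ergodic and satisfies $\lambda_n(\Delta^{(n)})=0$ because the $\mu_x$ are non-atomic; apply the pointwise-good decomposition (Theorem \ref{thm:tau-x}) to $(X^n,\lambda_n,T^{(n)})$, so the averages of the continuous function $f=\min_{1\leq i<j\leq n}d(x_i,x_j)$ converge $\lambda_n$-a.e.\ to $\int f\,d\tau_{(x_1,\dotsc,x_n)}$, which is positive a.e.\ since the measures $\tau_{(x_1,\dotsc,x_n)}$ do not charge $\Delta^{(n)}$; then use the Glasner--Thouvenot--Weiss theorem to identify the Pinsker algebra of the product with $\pi^{-1}(P_\mu(T))$, so that Theorem \ref{thm:Pinsker-constant-atom} (whose hypotheses hold by Theorem \ref{thm-factor-Pinkser}, since Condition ($*$) concerns only the sequence) makes this limit a constant $\eta_{x,n}>0$ on $\mu_x^{(n)}$-a.e.\ fibre. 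The only facts used are that the a.e.\ limit exists, is fibre-constant, and is a.e.\ positive --- its value is never computed or bounded below by an independence argument. Without this step (or an equivalent one) your proposal does not close; the remaining Mycielski machinery you describe is indeed standard once both residual sets are in hand.
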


\medskip

The proof of Theorem \ref{thm-main-result} is based on the techniques used in \cite{HLY14}.
A main new ingredient in this paper is the convergence argument for sequences given in Section 3.

\begin{cor}
Let $\{a_k\}$ be a pointwise good sequence satisfying Condition ($*$).
Suppose that $(X,T)$ is a topological dynamical system with positive topological entropy.
Then $(X,T)$ is Li-Yorke chaotic along the sequence $\{a_k\}$;
namely, there exists a Cantor subset $K$ of $X$ such that
for any $x,y\in K$ with $x\neq y$
we have
\[\liminf_{N\to\infty}d(T^{a_k}x,T^{a_k}y)=0
\,\;\,\text{ and }\,\;\,
\limsup_{N\to\infty}d(T^{a_k}x,T^{a_k}y)>0.\]
\end{cor}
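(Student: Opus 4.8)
The plan is to deduce the Corollary directly from the Main Theorem (Theorem \ref{thm-main-result}) by specializing to the two-variable case $n=2$. Indeed, applying Theorem \ref{thm-main-result} to the system $(X,T)$ under the same hypotheses (pointwise good sequence satisfying Condition ($*$), positive topological entropy) produces a Cantor subset $K\subseteq X$ such that for every $n\ge 2$ and every choice of pairwise distinct points $x_1,\dotsc,x_n\in K$, the $\liminf$ of the averaged maximum gap vanishes and the $\limsup$ of the averaged minimum gap is positive. Taking $n=2$ and any $x,y\in K$ with $x\neq y$, both the $\max$ and the $\min$ over the single pair $\{1<2\}$ collapse to $d(T^{a_k}x,T^{a_k}y)$, so we obtain
\[
\liminf_{N\to\infty}\frac{1}{N}\sum_{k=1}^N d(T^{a_k}x,T^{a_k}y)=0
\quad\text{and}\quad
\limsup_{N\to\infty}\frac{1}{N}\sum_{k=1}^N d(T^{a_k}x,T^{a_k}y)>0.
\]

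The only remaining point is to pass from these Cesàro-averaged statements to the pointwise statements $\liminf_k d(T^{a_k}x,T^{a_k}y)=0$ and $\limsup_k d(T^{a_k}x,T^{a_k}y)>0$ appearing in the Corollary. This is a soft comparison: for a bounded nonnegative sequence $b_k=d(T^{a_k}x,T^{a_k}y)$ (bounded by $\diam(X)$), one always has
\[
\liminf_{k\to\infty} b_k \;\le\; \liminf_{N\to\infty}\frac1N\sum_{k=1}^N b_k
\quad\text{and}\quad
\limsup_{k\to\infty} b_k \;\ge\; \limsup_{N\to\infty}\frac1N\sum_{k=1}^N b_k .
\]
Hence $\liminf_k b_k\le 0$, which together with $b_k\ge 0$ forces $\liminf_k b_k=0$; and $\limsup_k b_k\ge \limsup_N \frac1N\sum_{k\le N} b_k>0$. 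This yields exactly the pair of conditions in the Corollary for every distinct $x,y\in K$, so the same Cantor set $K$ witnesses Li-Yorke chaos along $\{a_k\}$.

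There is no real obstacle here; the content of the Corollary is entirely contained in the Main Theorem, and the argument is just the elementary observation that averaged asymptotic oscillation is dominated by (respectively dominates) the pointwise lower (respectively upper) asymptotic oscillation. The only thing to be slightly careful about is that one needs the same Cantor set $K$ to serve for all pairs simultaneously, which is automatic since Theorem \ref{thm-main-result} already delivers a single $K$ working for all tuples at once.
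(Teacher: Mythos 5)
Your proposal is correct and is exactly the intended derivation: the paper states this corollary without proof as an immediate consequence of Theorem \ref{thm-main-result}, and your argument (take $n=2$, then use the standard Ces\`aro comparison $\liminf_k b_k\le\liminf_N\frac1N\sum_{k\le N}b_k\le\limsup_N\frac1N\sum_{k\le N}b_k\le\limsup_k b_k$ for the bounded nonnegative sequence $b_k=d(T^{a_k}x,T^{a_k}y)$) fills in precisely the routine step the authors left implicit. Nothing further is needed.
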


\medskip

As we mentioned previously, the prime sequence and $\{[p(k)]\}$ (where $p(x)$ is a polynomial
of real coefficients)
are pointwise good and satisfy Condition ($*$).

\begin{cor}
Any positive entropy system is multivariant mean Li-Yorke chaotic along the prime sequence.
\end{cor}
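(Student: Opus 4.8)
The plan is to obtain this statement as an immediate consequence of Theorem~\ref{thm-main-result}; all that has to be done is to check that the sequence of primes meets its two hypotheses. Write $\{p_k\}_{k=1}^\infty$ for the primes listed in increasing order. First I would recall that $\{p_k\}$ is pointwise good: this is precisely Bourgain's pointwise ergodic theorem along the primes~\cite{B88a}, which guarantees that for every measure preserving system $(X,\mathcal{B},\mu,T)$ and every $f\in L^2(\mu)$ the averages $\frac1N\sum_{k=1}^N f(T^{p_k}x)$ converge for $\mu$-almost every $x$. Second, $\{p_k\}$ is a strictly increasing sequence of positive integers, so it satisfies Condition~($*$) by Lemma~\ref{lem:sequence-condition-star}; alternatively one checks this directly, noting that for a fixed $L>0$ any pair $(i,j)\in[1,N]^2$ with $|p_i-p_j|\le L$ forces $|i-j|\le L$, whence the count is $O(LN)=o(N^2)$.

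Having verified both hypotheses, I would simply apply Theorem~\ref{thm-main-result} with $\{a_k\}=\{p_k\}$: since $(X,T)$ has positive topological entropy, the theorem provides a Cantor set $K\subseteq X$ such that for every integer $n\ge2$ and pairwise distinct points $x_1,\dotsc,x_n\in K$,
\[
\liminf_{N\to\infty}\frac1N\sum_{k=1}^N\max_{1\le i<j\le n} d(T^{p_k}x_i,T^{p_k}x_j)=0
\quad\text{and}\quad
\limsup_{N\to\infty}\frac1N\sum_{k=1}^N\min_{1\le i<j\le n} d(T^{p_k}x_i,T^{p_k}x_j)>0,
\]
which is exactly the asserted multivariant mean Li-Yorke chaos along the prime sequence.

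There is essentially no obstacle internal to this corollary: it is a one-line deduction once the main theorem is in place. The genuine difficulties sit upstream — in Bourgain's (highly nontrivial) pointwise theorem for the primes, which is already available in the literature, and in Theorem~\ref{thm-main-result} itself, whose proof fuses the entropy machinery of~\cite{HLY14} with the convergence results of Section~3, in particular the statement (Theorem~\ref{thm-factor-Pinkser}) that the Pinsker $\sigma$-algebra is a characteristic $\sigma$-algebra for every sequence satisfying Condition~($*$), so that the ergodic average along the primes has a constant value on each atom of the generating partition of the Pinsker $\sigma$-algebra. Thus the only things to be careful about here are citing a correct pointwise ergodic theorem for the primes and confirming Condition~($*$), both of which are routine.
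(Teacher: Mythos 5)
Your proposal is correct and matches the paper's own route: the prime sequence is pointwise good by Bourgain's theorem cited as \cite{B88a}, it satisfies Condition~($*$) since it is strictly increasing (Lemma~\ref{lem:sequence-condition-star}), and the corollary then follows immediately from Theorem~\ref{thm-main-result}. Nothing further is needed.
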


\begin{cor}
Any positive entropy system is multivariant mean Li-Yorke chaotic along the sequence $\{[p(k)]\}$,
where $p(x)$ is a polynomial of real coefficients.
\end{cor}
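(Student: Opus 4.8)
The plan is to deduce this corollary directly from the Main theorem (Theorem~\ref{thm-main-result}): it suffices to check that, after discarding finitely many initial terms, the sequence $\{[p(k)]\}$ is pointwise good and satisfies Condition~($*$). We may assume that $p$ is non-constant with positive leading coefficient, since otherwise $\{[p(k)]\}$ is not an increasing sequence of positive integers (a constant polynomial gives a constant sequence, for which the $\liminf$ condition already fails; a negative leading coefficient makes $p(k)\to-\infty$). Then $p(k)\to+\infty$ and $p$ is eventually increasing, so there is $k_0$ such that $a_k:=[p(k)]$, $k\ge k_0$, is a non-decreasing sequence of positive integers tending to infinity; relabelling, take $\{a_k\}_{k\ge1}$ to be this sequence.

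That $\{a_k\}$ is pointwise good is precisely Bourgain's theorem \cite{B88a,B88,B89} on pointwise convergence of polynomial ergodic averages, which is valid for every polynomial $p$ with real coefficients. When $\deg p=1$ with leading coefficient in $(0,1)$ the sequence $\{a_k\}$ need not be strictly increasing, but then each value is attained only boundedly often; one may pass to the strictly increasing sequence of distinct values of $[p(k)]$, and Bourgain's results apply in either formulation.

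It remains to verify Condition~($*$). Suppose first $\deg p\ge2$; then $a_{k+1}-a_k> p(k+1)-p(k)-1\to\infty$, so given $L>0$ we may fix $M$ with $a_{k+1}-a_k>2L$ for all $k\ge M$. The numbers $a_j$ with $j\ge M$ are then $2L$-separated, so any interval of length $2L$ meets at most one of them; consequently $\#\{j\in[1,N]:|a_i-a_j|\le L\}\le M+1$ for every $i$, whence $\#\{(i,j)\in[1,N]^2:|a_i-a_j|\le L\}\le(M+1)N=o(N^2)$. If instead $\deg p=1$, then $\{a_k\}$ is either strictly increasing or non-decreasing with uniformly bounded multiplicities, so any interval of length $2L$ accounts for only $O_L(1)$ indices $j$, giving the same bound; alternatively, Condition~($*$) follows from Lemma~\ref{lem:sequence-condition-star} together with this bounded-multiplicity observation. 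With both hypotheses of Theorem~\ref{thm-main-result} verified, the theorem produces the required Cantor set $K\subseteq X$.

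The essential difficulty of the corollary lies not in this deduction but in the Main theorem itself, and — beneath it — in Bourgain's deep pointwise ergodic theorem. The only genuinely new point here, and an entirely routine one, is the short case analysis on $\deg p$ needed to confirm Condition~($*$) when $[p(k)]$ is not strictly increasing.
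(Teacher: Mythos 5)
Your proposal is correct and follows exactly the paper's route: the corollary is deduced from Theorem~\ref{thm-main-result} by citing Bourgain's theorem (Theorem~\ref{pol}) for pointwise goodness and checking Condition~($*$) in the spirit of Lemma~\ref{lem:sequence-condition-star}, which is precisely what the paper does via Remark~\ref{rem:good-sequences-examples}. Your explicit gap/bounded-multiplicity case analysis for Condition~($*$), and the reduction to non-constant $p$ with positive leading coefficient, merely spell out details the paper leaves as routine.
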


\medskip

This paper is organized as follows. In Section 2, we review some necessary notions and required properties.
In Section 3, we study ergodic average along sequences.
More precisely, we show that
the Pinsker $\sigma$-algebra is a characteristic $\sigma$-algebra for any sequence
satisfying Condition ($*$)
(Theorem \ref{thm-factor-Pinkser}).
Moreover, for any pointwise good sequence satisfying Condition ($*$)
we have a decomposition for any invariant measure
(Theorem \ref{thm:tau-x}); meanwhile,
the limit of ergodic average is a constant on each atom of the generating partition
of the Pinsker $\sigma$-algebra (Theorem \ref{thm:Pinsker-constant-atom}).
In Section 4,
we prove Theorem \ref{thm-main-result},
which holds for all continuous surjective maps as well
(Theorem \ref{thm:non-invertible-case}).

\medskip

\section{Preliminaries}

For convenience,
our notations will be as close to \cite{HLY14} as possible. For the reader who is not familiar with notions in ergodic theory and dynamical systems, we refer to \cite{EW,PW}.

\subsection{Mycielski's Theorem}
Let $X$ be a compact metric space and $C(X)$ the space of all real-valued continuous functions on $X$
equipped with the supremum norm.
For $n\geq2$, denote $X^n=X\times X\times\dotsb\times X$ ($n$-copies).
Set $\Delta_n=\{(x,x,\dotsc,x)\in X^n\colon x\in X\}$
and $\Delta^{(n)}=\{(x_1,x_2,\dotsc,x_n)\in X^n:\,\text{ there exists }1\leq i<j\leq n\text{ such that }x_i=x_j\}$.
We shall use the following version of Mycielski's theorem.

\begin{thm}[Cf. {\cite[Theorem 1]{My}}]\label{Myc}
Assume that $X$ is a perfect compact metric space.
If for every integer $n\geq2$, $R_n$ is a dense $G_\delta$ subset of $X^n$,
then there exists a dense subset $K$ of $X$ which is a union of countably many Cantor sets
such that
$K^n\subset R_n\cup\Delta^{(n)}$ holds for all integers $n\geq2$.
\end{thm}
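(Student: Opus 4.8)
The plan is a Cantor-scheme (Mycielski-type) construction, carried out for countably many schemes at once so as to upgrade a single Cantor set to a dense countable union of Cantor sets. First I would normalise the data: write each $R_n=\bigcap_{m\ge1}R_{n,m}$ with $R_{n,m}$ open and dense in $X^n$, and assume $R_{n,1}\supseteq R_{n,2}\supseteq\cdots$. Fix a countable base $\{G_j\}_{j\ge1}$ of nonempty open subsets of $X$; arranging that the final set meets each $G_j$ will give density.

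Next I would construct, for every $j\ge1$, a Cantor scheme $\{V^{(j)}_s:s\in\{0,1\}^{<\omega}\}$ of nonempty open sets with $\overline{V^{(j)}_\emptyset}\subseteq G_j$, with $\overline{V^{(j)}_{s0}}\cup\overline{V^{(j)}_{s1}}\subseteq V^{(j)}_s$, with the sets $\{\overline{V^{(j)}_s}:|s|=k\}$ pairwise disjoint for each $k$, and with $\diam V^{(j)}_s\to 0$ as $|s|\to\infty$; here perfectness of $X$ is exactly what lets one keep splitting a nonempty open set into two nonempty open pieces with disjoint closures. The additional ``independence'' requirement I want is that, for every $n\ge2$, every $m\le k$, and all labels $(j_1,s_1),\dots,(j_n,s_n)$ that are pairwise distinct (meaning $j_p\ne j_q$, or $j_p=j_q$ and $s_p\ne s_q$) with $|s_1|=\dots=|s_n|=k$ and $j_1,\dots,j_n\le k$, one has
\[
\overline{V^{(j_1)}_{s_1}}\times\dots\times\overline{V^{(j_n)}_{s_n}}\subseteq R_{n,m}.
\]
The point that makes this achievable is that, for each fixed level $k$, this is only \emph{finitely many} conditions. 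So the construction runs level by level: to pass from level $k$ to level $k+1$ one first splits every current bottom-level set in every active scheme (also activating scheme $k+1$ and bringing it up to level $k+1$), and then handles the finitely many new conditions one at a time — given a condition for $(n,m,(j_1,s_1),\dots,(j_n,s_n))$, density of $R_{n,m}$ together with non-emptiness of $\prod_p V^{(j_p)}_{s_p}$ furnishes nonempty open $W_p$ with $\overline{W_p}\subseteq V^{(j_p)}_{s_p}$ and $\prod_p\overline{W_p}\subseteq R_{n,m}$, and one replaces $V^{(j_p)}_{s_p}$ by $W_p$. Since these sets are all at the current bottom level they have no descendants, and within a single scheme the $s_p$ are distinct strings of equal length and hence already disjoint, so the successive shrinkings never conflict and no earlier containment is disturbed.

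Now set $K_j=\bigcap_{k\ge0}\bigcup_{|s|=k}\overline{V^{(j)}_s}$. Standard Cantor-scheme facts show that the branch map $\{0,1\}^\omega\to K_j$, $\alpha\mapsto\bigcap_k\overline{V^{(j)}_{\alpha|k}}$, is a homeomorphism, so each $K_j$ is a Cantor set with $K_j\subseteq G_j$; hence $K:=\bigcup_{j\ge1}K_j$ is dense in $X$ and is a union of countably many Cantor sets. Given pairwise distinct $x_1,\dots,x_n\in K$, choose $j_p$ with $x_p\in K_{j_p}$ and let $\alpha_p$ be the branch of the $j_p$-th scheme coding $x_p$; since the branch maps are injective and the $x_p$ are distinct, for all large $k$ the labels $(j_1,\alpha_1|k),\dots,(j_n,\alpha_n|k)$ are pairwise distinct. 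Fix $m$; for $k$ large enough (in particular $k\ge m$, $k\ge n$, $k\ge\max_p j_p$) the corresponding independence condition holds, so $(x_1,\dots,x_n)\in\prod_p\overline{V^{(j_p)}_{\alpha_p|k}}\subseteq R_{n,m}$. As $m$ was arbitrary, $(x_1,\dots,x_n)\in R_n$; therefore $K^n\subseteq R_n\cup\Delta^{(n)}$ for every $n\ge2$.

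The real content — and the only delicate point — is the observation that only finitely many independence conditions live at each level, which turns what would otherwise be an intricate bookkeeping into a transparent level-by-level induction; coupling countably many such schemes over the base $\{G_j\}$ is then just what is needed to make the union dense. Everything else (that the $K_j$ are genuine Cantor sets, that density holds, that the shrinkings can be performed inside open dense sets) is routine.
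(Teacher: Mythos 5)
Your construction is correct: the level-by-level Cantor scheme with countably many schemes rooted in a countable base, the finitely-many-conditions-per-level observation, and the final passage from distinct points to eventually distinct labels all hold up, and this is essentially the standard (Mycielski/Kuratowski--Mycielski) argument. Note that the paper itself offers no proof of Theorem~\ref{Myc} --- it simply cites Mycielski's original article --- so your write-up is in effect a self-contained proof of the cited result, following the same route as that source.
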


\subsection{Conditional expectation and disintegration}\label{Conditional expectation}
Let $(X,\mathcal{B},\mu)$ be a probability space and
$\mathcal{A}$ a sub-$\sigma$-algebra of $\mathcal{B}$.
The conditional expectation is a map
\[E(\,\cdot\,|\mathcal{A})\colon L^1(X,\mathcal{B},\mu)\to L^1(X,\mathcal{A},\mu)\]
satisfying the following conditions:
\begin{enumerate}
  \item for every $f\in L^1(X,\mathcal{B},\mu)$, $E(f|\mathcal{A})$ is  $\mathcal{A}$-measurable;
  \item if $g$ is $\mathcal{A}$-measurable and $fg\in L^1(X,\mathcal{B},\mu)$,
  then $E(fg|\mathcal{A})=gE(f|\mathcal{A})$;
  \item if $f\in L^p(X,\mathcal{B},\mu)$ for some $p\geq1$,
  then $E(f|\mathcal{A})\in L^p(X,\mathcal{A},\mu)$ and
  \[\Vert E(f|\mathcal{A})\Vert_{L^p}\leq \Vert f \Vert_{L^p}.\]
\end{enumerate}

The Martingale theorem is well known
(see e.g., \cite[Theorem 14.26]{Gl1}, \cite[Chapter 5.2]{EW}).
\begin{thm}[Martingale theorem]\label{thm:Margingale-theorem}
Let $(X,\mathcal{B},\mu)$ be a probability space.
Suppose that $\{\mathcal{A}_n\}_{n=1}^\infty$ is a decreasing sequence (resp.
an increasing sequence)
of sub-$\sigma$-algebras of $\mathcal{B}$ and
$\mathcal{A}=\bigcap_{n\geq 1}\mathcal{A}_n$ (resp.
$\mathcal{A}=\bigvee_{n\geq 1}\mathcal{A}_n$).
Then for any $f\in L^1(\mu)$,
\[E(f|\mathcal{A}_n)\to E(f|\mathcal{A})\]
as $n\to\infty$ in $L^1(\mu)$ and $\mu$-almost everywhere.
\end{thm}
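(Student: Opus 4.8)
The plan is to reduce to the case that $f$ is $\mathcal{A}$-measurable, to establish almost-everywhere convergence via Doob's upcrossing inequality, and to deduce $L^1$ convergence from the contraction property~(3) together with uniform integrability. First I would record the martingale structure: in the increasing case each $\mathcal{A}_n\subseteq\mathcal{A}$, and in the decreasing case each $\mathcal{A}\subseteq\mathcal{A}_n$, so in both situations the tower property of conditional expectation lets me work with the single sequence $f_n:=E(f|\mathcal{A}_n)$, which by property~(3) satisfies $\Vert f_n\Vert_{L^1}\le\Vert f\Vert_{L^1}$ for all $n$. Moreover, for $m\le n$ the tower property gives $E(f_m|\mathcal{A}_n)=f_m$ in the increasing case and $E(f_m|\mathcal{A}_n)=f_n$ in the decreasing case, so $\{f_n\}$ is a martingale (resp.\ a reverse martingale) adapted to $\{\mathcal{A}_n\}$.

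For the pointwise statement I would invoke Doob's upcrossing lemma: for a (sub)martingale and reals $a<b$, the expected number of upcrossings of $[a,b]$ by $f_1,\dots,f_N$ is at most $(b-a)^{-1}\int(f_N-a)^{+}\,d\mu$, with the analogous bound for reverse martingales. Since $\Vert f_N\Vert_{L^1}\le\Vert f\Vert_{L^1}$ uniformly in $N$, letting $N\to\infty$ shows that for each rational pair $a<b$ the total number of upcrossings of $[a,b]$ is finite $\mu$-a.e.; intersecting over all rational $a<b$ yields $\liminf_n f_n=\limsup_n f_n$ $\mu$-a.e. Writing $f_\infty$ for the common value, Fatou's lemma with the uniform $L^1$-bound gives $f_\infty\in L^1(\mu)$, so $f_\infty$ is finite $\mu$-a.e. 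In the increasing case $f_\infty$ is measurable with respect to $\bigvee_n\mathcal{A}_n=\mathcal{A}$; in the decreasing case, for each fixed $m$ one has $f_\infty=\lim_{n\ge m}f_n$, which is measurable with respect to $\sigma(f_m,f_{m+1},\dots)\subseteq\mathcal{A}_m$, hence with respect to $\bigcap_m\mathcal{A}_m=\mathcal{A}$.

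It remains to upgrade to $L^1$ and to identify the limit. The family $\{f_n\}=\{E(f|\mathcal{A}_n)\}$ is uniformly integrable, by the standard argument using that $\{|f_n|>\lambda\}\in\mathcal{A}_n$ and hence $\int_{\{|f_n|>\lambda\}}|f_n|\le\int_{\{|f_n|>\lambda\}}|f|$; therefore the $\mu$-a.e. convergence $f_n\to f_\infty$ improves to $L^1$ convergence by Vitali's theorem. Now for any $A\in\mathcal{A}$ we have $A\in\mathcal{A}_n$ for every $n$, so $\int_A f_\infty\,d\mu=\lim_n\int_A f_n\,d\mu=\lim_n\int_A f\,d\mu=\int_A f\,d\mu$; since $f_\infty$ is $\mathcal{A}$-measurable, this shows $f_\infty=E(f|\mathcal{A})$ and finishes the proof. (One can also obtain the $L^1$ convergence without uniform integrability: by the contraction property it suffices to check it on the dense subset $L^2(\mu)$, where $f_n$ is the orthogonal projection of $f$ onto $L^2(X,\mathcal{A}_n,\mu)$, and $\bigcup_n L^2(\mathcal{A}_n)$ is dense in $L^2(\mathcal{A})$, resp.\ $\bigcap_n L^2(\mathcal{A}_n)=L^2(\mathcal{A})$.)

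The only genuinely nontrivial input is the upcrossing inequality (equivalently, Doob's maximal inequality together with a convergence criterion); everything else is soft manipulation — the tower property, the $L^1$-contraction bound, density, and uniform integrability. The step needing the most care is the reverse-martingale (decreasing) case: one must verify that the a.e. limit is measurable with respect to $\mathcal{A}=\bigcap_n\mathcal{A}_n$, and that reverse martingales of the special form $E(f|\mathcal{A}_n)$ are automatically uniformly integrable, which, unlike for general martingales, requires no extra hypothesis.
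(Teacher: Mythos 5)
The paper does not prove this statement at all --- it is quoted as a known result with references to Glasner and to Einsiedler--Ward --- so there is no internal proof to compare with; what you wrote is the classical textbook argument (upcrossings for the a.e.\ statement, uniform integrability for the $L^1$ statement), and in outline it is the right one.

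One step fails as written, though, in the increasing case. You identify the limit by asserting that ``for any $A\in\mathcal{A}$ we have $A\in\mathcal{A}_n$ for every $n$''. That inclusion is correct when $\mathcal{A}=\bigcap_n\mathcal{A}_n$ (the decreasing case), but in the increasing case $\mathcal{A}=\bigvee_n\mathcal{A}_n$ contains each $\mathcal{A}_n$, not the other way around, so a set $A\in\mathcal{A}$ need not lie in any $\mathcal{A}_n$ and the identity $\int_A f_n\,d\mu=\int_A f\,d\mu$ is not available for it. The standard repair: fix $m$ and $A\in\mathcal{A}_m$; then $\int_A f_n\,d\mu=\int_A f\,d\mu$ for all $n\geq m$, and the $L^1$ convergence gives $\int_A f_\infty\,d\mu=\int_A f\,d\mu=\int_A E(f|\mathcal{A})\,d\mu$ for every $A$ in the algebra $\bigcup_m\mathcal{A}_m$. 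Since this algebra is a $\pi$-system generating $\mathcal{A}$, and both $f_\infty$ and $E(f|\mathcal{A})$ are $\mathcal{A}$-measurable and integrable, a monotone class (Dynkin) argument upgrades the equality of integrals to all $A\in\mathcal{A}$, whence $f_\infty=E(f|\mathcal{A})$ a.e. Your parenthetical $L^2$-projection argument also identifies the limit correctly in both cases, so this is a local slip rather than a wrong approach. A smaller wording point: the martingale property you need is $E(f_n|\mathcal{A}_m)=f_m$ for $m\leq n$ in the increasing case (tower property with the smaller algebra outside); the identity you wrote, $E(f_m|\mathcal{A}_n)=f_m$, merely restates that $f_m$ is $\mathcal{A}_n$-measurable. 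Everything else --- the uniform integrability of $\{E(f|\mathcal{A}_n)\}$, the upcrossing argument, and the $\mathcal{A}$-measurability of the a.e.\ limit in the reverse-martingale case --- is correct.
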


Let $X$ be a compact metric space.
Denote by $\mathcal{B}_X$ the Borel $\sigma$-algebra of $X$
and $\mathcal{M}(X)$ the set of all Borel probability measures on $X$.
For $\mu\in\mathcal{M}(X)$,
let $\mathcal{B}_\mu$ be the completion of $\mathcal{B}_X$ under the measure $\mu$.
Then $(X,\mathcal{B}_\mu,\mu)$ is a Lebesgue space.
A finite partition of $X$ is a finite family of pairwise distinct measurable subsets of $X$
whose union is $X$.
If $\{ \alpha_i\}_{i\in I}$ is a countable family of finite
partitions of $X$, then we say that $\alpha=\bigvee_{i\in I}\alpha_i$
is a {\it measurable partition}. The sets $A\in
\mathcal{B}_\mu$, which are unions of atoms of $\alpha$, form a
sub-$\sigma$-algebra $\mathcal{B}_\mu$, which we denote by $\widehat{\alpha}$
or $\alpha$ if there is no ambiguity. Every sub-$\sigma$-algebra of
$\mathcal{B}_\mu$ coincides with a $\sigma$-algebra constructed in
this way (mod $\mu$).

Let $\mathcal{F}$ be a sub-$\sigma$-algebra of $\mathcal{B}_\mu$ and
$\alpha$ a measurable partition of $X$ with $\widehat{\alpha}=\mathcal{F}$ (mod $\mu$).
Then $\mu$ can be disintegrated over $\mathcal{F}$ as
\[\mu=\int_X \mu_x d \mu(x),\]
where $\mu_x\in \mathcal{M}(X)$ and $\mu_x(\alpha(x))=1$ for $\mu$-a.e. $x\in X$.
The disintegration can be characterized by the
properties \eqref{meas1} and \eqref{meas3} as follows:
\begin{align}
&\text{for every } f \in L^1(X,\mathcal{B}_X,\mu),\
f \in L^1(X,\mathcal{B}_X,\mu_x)\ \text{for $\mu$-a.e. } x\in X, \label{meas1}\\
& \text{and the map $x \mapsto \int_X  f(y)\,d\mu_x(y)$ is in $L^1(X,\mathcal{F},\mu)$}; \notag\\
& \text{for every } f\in L^1(X,\mathcal{B}_X,\mu),\
E(f|\mathcal{F})(x)=\int_X f\,d\mu_{x}\ \text{for $\mu$-a.e. } x\in X. \label{meas3}
\end{align}
Then for any $f \in L^1(X,\mathcal{B}_X,\mu)$, one has
\begin{equation*}
\int_X \left(\int_X f(y)\,d\mu_x(y) \right)\, d\mu(x)=\int_X f \,d\mu.
\end{equation*}

\subsection{Ergodic theory}
By a \emph{measure-preserving system}, we mean a quadruple $(X,\mathcal{B},\mu,T)$,
where $(X,\mathcal{B},\mu)$ is a probability space and
$T\colon X\to X$ is an invertible measure-preserving transformation.
A measure-preserving system $(X,\mathcal{B},\mu,T)$ is called \emph{ergodic} if
the only members $B\in\mathcal{B}$ with $T^{-1}B=B$ satisfy $\mu(B)=0$ or $\mu(B)=1$.

Let $\alpha$ be a finite partition of $X$.
The measure-theoretic entropy of $\mu$ relative to $\alpha$ is denoted by $h_\mu(T,\alpha)$,
and the measure-theoretic entropy of $\mu$ is defined as
\[h_\mu(T)=\sup_{\alpha} h_\mu(T,\alpha),\]
where the supremum ranges over all finite partitions of $X$.

The \emph{Pinsker $\sigma$-algebra} of a system $(X,\mathcal{B}, \mu,T)$ is defined as
\[P_\mu(T)=\{A\in \mathcal{B}\colon h_\mu(T,\{A, X\setminus A\})=0\}.\]
It is easy to see that $P_\mu(T)$ is $T$-invariant.
The Rokhlin-Sinai theorem identifies
the Pinsker $\sigma$-algebra as the ``remote past'' of a generating partition.

\begin{thm}[\cite{RS61}]
For a measure-preserving system $(X,\mathcal{B},\mu,T)$,
there exists a sub-$\sigma$-algebra $\mathcal{P}$ of $\mathcal{B}$
such that $T^{-1}\mathcal{P}\subset\mathcal{P}$, $\bigvee_{k=0}^\infty T^k\mathcal{P}=\mathcal{B}$
and $\bigcap_{n=0}^\infty T^{-k}\mathcal{P}=P_\mu(T)$.
\end{thm}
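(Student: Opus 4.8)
The plan is to take $\mathcal P$ to be the one-sided $\sigma$-algebra generated by a two-sided generator and to identify its ``remote'' tail with the Pinsker $\sigma$-algebra by an entropy computation. Since $X$ is compact metric and $\mu$ is Borel, $(X,\mathcal B,\mu)$ is (isomorphic to) a Lebesgue space, so Rokhlin's structure theory applies. Splitting off the periodic part of $T$ (on which $T$ has zero entropy, so that $P_\mu(T)=\mathcal B$ there and one simply takes $\mathcal P=\mathcal B$), we may assume $T$ aperiodic and fix a countable partition $\xi$ with $\bigvee_{n\in\mathbb Z}T^n\xi=\mathcal B$; when $h_\mu(T)<\infty$ we may take $\xi$ finite (Krieger), which is the main case. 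Write $\xi_a^\infty:=\bigvee_{n\ge a}T^{-n}\xi$ and put $\mathcal P:=\xi_0^\infty$. Then $T^{-1}\mathcal P=\xi_1^\infty\subseteq\mathcal P$ and $\bigvee_{k\ge0}T^k\mathcal P=\bigvee_{m\in\mathbb Z}T^m\xi=\mathcal B$, so everything reduces to showing that $\Pi:=\bigcap_{n\ge0}T^{-n}\mathcal P=\bigcap_{n\ge0}\xi_n^\infty$ equals $P_\mu(T)$; note that $\Pi$ is $T$-invariant.

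For the inclusion $\Pi\subseteq P_\mu(T)$ I would show the factor $(X,\Pi,\mu,T)$ has zero entropy. Using Rokhlin's formula in the $\sigma$-algebra form, $h_\mu(T,\mathcal A)=H_\mu\!\left(\mathcal A\,\middle|\,\bigvee_{n\ge1}T^{-n}\mathcal A\right)$, together with the chain rule for conditional entropy, and the observations $\Pi\subseteq T^{-1}\mathcal P=\xi_1^\infty$ and $\Pi\subseteq\mathcal P=\xi_0^\infty$ (so $\bigvee_{n\ge1}T^{-n}(\xi\vee\Pi)=\xi_1^\infty$), one computes
\[
h_\mu(T,\xi\vee\Pi)=H_\mu(\xi\mid\xi_1^\infty)+H_\mu(\Pi\mid\xi_0^\infty)=h_\mu(T,\xi)+0 .
\]
On the other hand $h_\mu(T,\xi\vee\Pi)=h_\mu(T|_\Pi)+h_\mu(T,\xi\mid\Pi)$, and since $\Pi$ is $T$-invariant and $\Pi\subseteq\xi_1^\infty$ we get $h_\mu(T,\xi\mid\Pi)=H_\mu(\xi\mid\xi_1^\infty\vee\Pi)=H_\mu(\xi\mid\xi_1^\infty)=h_\mu(T,\xi)$. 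Comparing the two expressions forces $h_\mu(T|_\Pi)=0$, hence $h_\mu(T,\{B,X\setminus B\})=0$ for every $B\in\Pi$, i.e.\ $\Pi\subseteq P_\mu(T)$. (This half does not use that $\xi$ generates.)

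For the reverse inclusion it suffices, by $T$-invariance of $P_\mu(T)$, to prove $P_\mu(T)\subseteq\mathcal P$: then $P_\mu(T)=T^{-n}P_\mu(T)\subseteq T^{-n}\mathcal P$ for all $n$, whence $P_\mu(T)\subseteq\Pi$. Since $h_\mu(T|_{P_\mu(T)})=0$ and $\xi$ is a generator, the standard relative entropy identity $h_\mu(T)=h_\mu(T|_{P_\mu(T)})+h_\mu(T,\xi\mid P_\mu(T))$ together with $h_\mu(T)=h_\mu(T,\xi)$ gives $H_\mu(\xi\mid\xi_1^\infty)=H_\mu(\xi\mid\xi_1^\infty\vee P_\mu(T))$. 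Equality in the monotonicity of conditional entropy forces $\xi$ and $P_\mu(T)$ to be conditionally independent over $\xi_1^\infty$; unwinding this (using $\xi\vee\xi_1^\infty=\xi_0^\infty=\mathcal P$) yields $E(g\mid\mathcal P)=E(g\mid\xi_1^\infty)$ for every bounded $P_\mu(T)$-measurable $g$. Applying $T^{-j}$ and the $T$-invariance of $P_\mu(T)$ upgrades this to $E(g\mid\xi_j^\infty)=E(g\mid\xi_{j+1}^\infty)$ for every $j$, hence $E(g\mid\mathcal P)=E(g\mid\xi_{-k}^\infty)$ for all $k\ge0$. Since $\xi_{-k}^\infty\nearrow\bigvee_{m\in\mathbb Z}T^{-m}\xi=\mathcal B$, the Martingale theorem (Theorem~\ref{thm:Margingale-theorem}) gives $E(g\mid\mathcal P)=g$, so $g$ is $\mathcal P$-measurable, and therefore $P_\mu(T)\subseteq\mathcal P$.

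The step I expect to be the main obstacle is this last inclusion: it is the only place the generator property is used essentially, and it relies on the lemma that equality of conditional entropies $H_\mu(\xi\mid\mathcal A)=H_\mu(\xi\mid\mathcal A\vee\mathcal C)$ implies conditional independence of $\xi$ and $\mathcal C$ over $\mathcal A$ (via strict concavity of $x\mapsto-x\log x$ and the Lebesgue-space structure), followed by the martingale convergence that exhausts $\mathcal B$ from the $\xi_{-k}^\infty$. The remaining loose ends — reducing the infinite-entropy case to the finite-entropy one by exhausting $\xi$ with finite subpartitions and passing to a limit, and dispatching the periodic part — are routine once the finite-entropy aperiodic case is settled.
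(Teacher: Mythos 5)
This statement is quoted in the paper from Rokhlin--Sinai \cite{RS61} and is not proved there, so there is no in-paper argument to compare against; what you have written is, in substance, the classical proof of the Rokhlin--Sinai theorem (essentially the treatment one finds in \cite{Gl1} or in Parry/Rokhlin): take a two-sided generator $\xi$, set $\mathcal P=\bigvee_{n\ge0}T^{-n}\xi$, verify the first two properties trivially, prove the tail $\Pi=\bigcap_n T^{-n}\mathcal P$ is contained in $P_\mu(T)$ by an entropy cancellation, and prove $P_\mu(T)\subseteq\mathcal P$ via equality of conditional entropies, conditional independence, and the martingale exhaustion $\xi_{-k}^\infty\nearrow\mathcal B$. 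That skeleton is correct, and the key second half (the conditional-independence/martingale step) is exactly the standard mechanism.

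Three caveats you should tighten. First, Krieger's finite generator theorem requires ergodicity, which is not assumed here; for the general aperiodic finite-entropy system you should instead invoke Rokhlin's countable generator theorem producing a countable generator of finite entropy (that is all your argument needs, since the cancellations only require $H_\mu(\xi\mid\xi_1^\infty)<\infty$). Second, the computation for $\Pi\subseteq P_\mu(T)$ should be run with an arbitrary finite partition $\alpha\subseteq\Pi$ rather than with the $\sigma$-algebra $\Pi$ itself, so that all entropy quantities are unambiguously defined: the same chain-rule argument gives $h_\mu(T,\xi\vee\alpha)=h_\mu(T,\xi)+H_\mu(\alpha\mid\xi_0^\infty)=h_\mu(T,\xi)$ and, conditioning the other way, $h_\mu(T,\alpha)=0$; this also makes the finiteness hypothesis visible. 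Third, and most importantly, both cancellation steps genuinely fail when $h_\mu(T)=\infty$ (from $\infty=\infty+x$ nothing follows), so the reduction of the infinite-entropy case is not mere bookkeeping: one must either exhaust by finite subpartitions and carry the conditional-independence argument through the limit, or quote the general form of the theorem. Since you flagged these points yourself, I regard them as loose ends of a correct standard proof rather than a gap in the approach.
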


\subsection{Topological dynamics}
Let $(X,T)$ be a topological dynamical system. For a point $x\in X$,
the \emph{stable set} of $x$ is defined as
\[W^{s}(X,T)=\left\{y\in X\colon\lim_{k\to\infty}d(T^kx,T^ky)=0\right\},\]
and the \emph{unstable set} of $x$ is defined as
\[W^{u}(X,T)=\left\{y\in X\colon\lim_{k\to\infty}d(T^{-k}x,T^{-k}y)=0\right\}.\]

Let $\mathcal{M}(X,T)$ (resp. $\mathcal{M}^e(X,T)$) be the collection of all $T$-invariant (resp.
ergodic $T$-invariant) Borel probability measures on $X$.
For every $\mu\in\mathcal{M}(X,T)$, $(X,\mathcal{B}_X,\mu,T)$ is a measure-preserving system.
The variational principle is famous:
\[h_\text{top}(X,T)=\sup_{\mu\in\mathcal{M}(X,T)}h_\mu(X,T)=\sup_{\mu\in\mathcal{M}^e(X,T)}h_\mu(X,T),\]
where $h_\text{top}(X,T)$ denotes the topological entropy of $(X,T)$.

\medskip

\section{The Pinsker $\sigma$-algebra and pointwise good sequences}
\subsection{Characteristic $\sigma$-algebras}
\begin{lem}\label{lem:sequence-condition-star}
Any strictly increasing sequence of positive integers
satisfies Condition ($*$).
\end{lem}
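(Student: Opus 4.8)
The plan is to bound, for each fixed index $i$, the number of indices $j$ that contribute to the count in Condition ($*$), and then sum over $i$. First I would fix $L>0$ and a positive integer $N$, and for each $i\in[1,N]$ set $J_i=\{j\in[1,N]:|a_i-a_j|\le L\}$. Since $\{a_k\}$ is strictly increasing, the map $k\mapsto a_k$ is injective and integer-valued, so the numbers $\{a_j:j\in J_i\}$ are pairwise distinct integers all lying in the interval $[a_i-L,\,a_i+L]$; that interval contains at most $\lfloor 2L\rfloor+1$ integers, hence $\#J_i\le\lfloor 2L\rfloor+1$.

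Summing this bound over $i\in[1,N]$ gives
\[\#\{(i,j)\in[1,N]^2:\,|a_i-a_j|\le L\}=\sum_{i=1}^{N}\#J_i\le N\bigl(\lfloor 2L\rfloor+1\bigr),\]
and therefore $\frac{1}{N^2}\#\{(i,j)\in[1,N]^2:|a_i-a_j|\le L\}\le\frac{\lfloor 2L\rfloor+1}{N}\to0$ as $N\to\infty$, which is exactly Condition ($*$).

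There is essentially no obstacle here: the argument only uses that a strictly increasing integer sequence is injective and has consecutive terms differing by at least $1$, so a window of length $2L$ can contain only boundedly many terms, uniformly in the window's position. The one minor technical point is that $L$ is an arbitrary positive real, so I keep $\lfloor 2L\rfloor+1$ (rather than $2L+1$) as the count of integers in the window; this has no effect on the limit.
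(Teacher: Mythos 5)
Your proof is correct and uses essentially the same idea as the paper: a strictly increasing integer sequence can have only $O(L)$ terms in any window of length $2L$, so the count of pairs is $O(NL)$ and vanishes after dividing by $N^2$ (the paper phrases this as $|a_i-a_j|\le L$ implying $|i-j|\le L$, giving the bound $2NL$, but the substance is identical).
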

\begin{proof}
Fix a strictly increasing sequence $\{a_k\}$ of positive integers and $L>0$.
It is clear that $|a_i-a_j|\leq L$ implies $|i-j|\leq L$.
For every integer $N>L$, we have
$$
\#\{(i,j)\in[1,N]^2\colon |a_{i}-a_{j}|\leq L\}
\leq\# \{(i,j)\in [1,N]^{2}\colon |i-j|\leq L\}
\leq 2NL.
$$
Thus,
\[\lim_{N\to\infty}\frac{1}{N^2}\#\{(i,j)\in[1,N]^2\colon |a_i-a_j|\leq L\}=0.\]
\end{proof}

\begin{thm}\label{thm-factor-Pinkser}
Suppose that $\{a_k\}$ is a sequence of positive integers satisfying Condition ($*$)
and $(X,\mathcal{B},\mu,T)$ is a measure-preserving system.
Then the Pinsker $\sigma$-algebra $P_\mu(T)$ is a
characteristic $\sigma$-algebra for the sequence $\{a_k\}$.
\end{thm}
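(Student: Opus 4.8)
The plan is to reduce the claim to a statement about a single function and then run a van der Corput type $L^2$-estimate in which Condition ($*$) annihilates the near-diagonal terms. Throughout, $\|\cdot\|$ denotes the $L^2(\mu)$-norm and $U$ the Koopman operator $Uh=h\circ T$ (so $T^{a_k}f=U^{a_k}f$), which is unitary on $L^2(\mu)$ since $T$ is an invertible measure-preserving transformation. Fix $f\in L^2(\mu)$ and set $g=f-E(f\mid P_\mu(T))$. By property (3) of the conditional expectation (with $p=2$) we have $g\in L^2(\mu)$, and since $E(\cdot\mid P_\mu(T))$ is the orthogonal projection of $L^2(\mu)$ onto $L^2(X,P_\mu(T),\mu)$, we have $E(g\mid P_\mu(T))=0$, equivalently $g\perp L^2(X,P_\mu(T),\mu)$. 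By linearity of $U^{a_k}$, the quantity whose vanishing we must prove is $\bigl\|\frac1N\sum_{k=1}^N U^{a_k}g\bigr\|$.

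The key input is the following decay of correlations off the Pinsker $\sigma$-algebra: if $h\in L^2(\mu)$ with $E(h\mid P_\mu(T))=0$, then $c(m):=\langle U^m h,h\rangle\to0$ as $|m|\to\infty$; note also $|c(m)|\le\|h\|^2$ for all $m\in\mathbb Z$ and $c(-m)=\overline{c(m)}$. To prove this I would invoke the Rokhlin--Sinai theorem to fix a sub-$\sigma$-algebra $\mathcal P$ with $T^{-1}\mathcal P\subset\mathcal P$, $\bigvee_{k\ge0}T^k\mathcal P=\mathcal B$ and $\bigcap_{n\ge0}T^{-n}\mathcal P=P_\mu(T)$. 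Then the closed subspaces $H_n:=L^2(X,T^{n}\mathcal P,\mu)$, $n\in\mathbb Z$, increase with $n$, satisfy $\overline{\bigcup_{n}H_n}=L^2(\mathcal B)$ and $\bigcap_{n}H_n=L^2(X,P_\mu(T),\mu)$, and $U$ maps $H_n$ onto $H_{n-1}$. Hence, putting $W_n:=H_n\ominus H_{n-1}$, one obtains an orthogonal decomposition $L^2(\mathcal B)\ominus L^2(X,P_\mu(T),\mu)=\bigoplus_{n\in\mathbb Z}W_n$ on which $U$ acts as a bilateral shift of the summands. For $h$ in this complement and $\eps>0$, choose $M$ so that the orthogonal projection $h'$ of $h$ onto $\bigoplus_{|n|\le M}W_n$ satisfies $\|h-h'\|<\eps$; for $|m|>2M$ the vector $U^m h'$ lies in $\bigoplus_{|n|>M}W_n$, hence is orthogonal to $h'$, so
\[
|c(m)|=\bigl|\langle U^m h,h\rangle-\langle U^m h',h'\rangle\bigr|
\le\|h-h'\|\,\|h\|+\|h'\|\,\|h-h'\|\le\eps\bigl(2\|h\|+\eps\bigr),
\]
and letting $\eps\to0$ gives $c(m)\to0$. (Equivalently, $U$ restricted to this complement has Lebesgue spectral type, so the spectral measure of $h$ is absolutely continuous and the Riemann--Lebesgue lemma applies.)

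With this in hand I would conclude by expanding the square and using unitarity of $U$:
\[
\Bigl\|\frac1N\sum_{k=1}^N U^{a_k}g\Bigr\|^2
=\frac1{N^2}\sum_{i=1}^N\sum_{j=1}^N\langle U^{a_i}g,U^{a_j}g\rangle
=\frac1{N^2}\sum_{i=1}^N\sum_{j=1}^N c(a_i-a_j).
\]
Given $\eps>0$, take $L>0$ with $|c(m)|<\eps$ for all $|m|>L$. Splitting the double sum according to whether $|a_i-a_j|\le L$, and bounding $|c|$ by $\|g\|^2$ on the first part and by $\eps$ on the second,
\[
\Bigl\|\frac1N\sum_{k=1}^N U^{a_k}g\Bigr\|^2
\le\frac{\|g\|^2}{N^2}\,\#\bigl\{(i,j)\in[1,N]^2:\,|a_i-a_j|\le L\bigr\}+\eps .
\]
By Condition ($*$) the first summand tends to $0$ as $N\to\infty$, whence $\limsup_{N\to\infty}\bigl\|\frac1N\sum_{k=1}^N U^{a_k}g\bigr\|^2\le\eps$; since $\eps>0$ is arbitrary, the limit is $0$. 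This is precisely the defining property of a characteristic $\sigma$-algebra for $\{a_k\}$, so $P_\mu(T)$ is characteristic. I expect the genuine difficulty to be concentrated in the middle step — the decay of correlations off the Pinsker factor — which is exactly where the Rokhlin--Sinai theorem is needed; the reduction via conditional expectation and the final van der Corput estimate driven by Condition ($*$) are routine.
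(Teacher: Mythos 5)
Your proof is correct. It shares the paper's skeleton --- the Rokhlin--Sinai filtration $H_n=L^2(T^n\mathcal P)$, an expansion of the squared $L^2$-norm of the averages into correlation terms, and Condition ($*$) to kill the near-diagonal pairs --- but the middle step is organized differently. You reduce at once to $g=f-E(f\mid P_\mu(T))$ and prove a clean, reusable lemma: correlations $\langle U^m g,g\rangle$ tend to $0$ as $|m|\to\infty$ for any $g$ orthogonal to $L^2(P_\mu(T))$, via the band decomposition $L^2\ominus L^2(P_\mu(T))=\bigoplus_{n\in\mathbb Z}W_n$ with $UW_n=W_{n-1}$ (equivalently, Lebesgue spectral type off the Pinsker algebra); the split of the double sum at a threshold $L$ then finishes it. The paper instead keeps two explicit martingale approximants, $g_m=E(f\mid T^m\mathcal P)$ and $h_n=E(g_m\mid T^{-n}\mathcal P)$, and shows by conditional-expectation identities that for $|a_i-a_j|\geq n+m$ the cross terms cancel \emph{exactly} ($A_{ij}=B_{ij}$, $C_{ij}=D_{ij}$, etc.), so only the near-diagonal pairs survive, which Condition ($*$) controls; the conclusion then comes from a $4\eps$ triangle-inequality chain. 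Your route buys a statement of independent interest (decay of correlations off the Pinsker factor) and makes transparent that Condition ($*$) is exactly a density condition on near-diagonal pairs; the paper's route avoids invoking the orthogonal band/spectral structure and stays entirely at the level of elementary conditional-expectation computations, at the cost of juggling two approximation parameters. Both rest on the same inputs (Rokhlin--Sinai plus the Martingale theorem), so this is a genuine but modest reorganization rather than a new method.
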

\begin{proof}
Fix $f\in L^2(\mu)$ and $\eps>0$.
Choose a partition $\mathcal{P}$ as in the Rohlin-Sinai theorem.
By Theorem \ref{thm:Margingale-theorem},
there exist $m>0$ and $g_m=E(f|T^m\mathcal{P})$ such that
\[\Vert g_m-f\Vert_{L^2(\mu)}<\eps.\]
Thus, it holds that
\begin{align*}
\biggl\Vert \frac{1}{N}\sum_{k=1}^{N} (T^{a_k}f- T^{a_k}g_m)\biggr\Vert_{L^2(\mu)}
&\leq \frac{1}{N} \sum_{k=1}^{N} \biggl\Vert T^{a_k}f- T^{a_k}g_m\biggr\Vert_{L^2(\mu)}\\
   &=\frac{1}{N} \sum_{k=1}^{N} \Vert f-g_m\Vert_{L^2(\mu)}<\eps
\end{align*}
and
\begin{align*}
&\biggl\Vert \frac{1}{N}\sum_{k=1}^{N} \bigl(T^{a_k}E(g_m|P_\mu(T))-
T^{a_k}E(f|P_\mu(T))\bigr)\biggr\Vert_{L^2(\mu)}\\
 &\qquad \leq\frac{1}{N} \sum_{k=1}^{N} \Vert T^{a_k}E(g_m|P_\mu(T))- T^{a_k}E(f|P_\mu(T))\Vert_{L^2(\mu)}\\
 &\qquad= \frac{1}{N} \sum_{k=1}^{N} \Vert E(g_m|P_\mu(T))-E(f|P_\mu(T))\Vert_{L^2(\mu)}\\
 &\qquad= \frac{1}{N} \sum_{k=1}^{N}\Vert E(g_m-f|P_\mu(T))\Vert_{L^2(\mu)}\\
 &\qquad\leq \frac{1}{N} \sum_{k=1}^{N}\Vert g_m-f\Vert_{L^2(\mu)} <\eps.
\end{align*}
By Theorem \ref{thm:Margingale-theorem} again,
there exist $n>0$ and $h_n= E(g_m|T^{-n}\mathcal{P})$ such that
\[\Vert h_n-E(g_m|P_\mu(T))\Vert_{L^2(\mu)}<\eps.\]
It follows that
\begin{align*}
\biggl\Vert \frac{1}{N}\sum_{k=1}^{N} \bigl(T^{a_k}h_n-T^{a_k}E(g_m|P_\mu(T))\bigr)
\biggr\Vert_{L^2(\mu)}<\eps.
\end{align*}

Notice that
\begin{align*}
\biggl\Vert \frac{1}{N}\sum_{k=1}^{N}(T^{a_k}g_m-T^{a_k}h_n)
\biggr\Vert_{L^2(\mu)}^2
&=\int \Bigl|\frac{1}{N}\sum_{k=1}^{N} (T^{a_k}g_m-T^{a_k}h_n)\Bigr|^2 d\mu\\
&= \frac{1}{N^2} \sum_{i,j=1}^{N} \int\bigl(T^{a_i}g_m-T^{a_i}h_n\bigr)
\bigl(T^{a_j}g_m-T^{a_j}h_n\bigr) d\mu \\
& =\frac{1}{N^2}\sum_{i,j=1}^{N} \bigl(A_{ij}-B_{ij}+C_{ij}-D_{ij}\bigr),
\end{align*}
where
\begin{align*}
A_{ij}&=\int T^{a_i}g_m \cdot T^{a_j}g_m d\mu,\quad
B_{ij}=\int T^{a_i}h_n\cdot T^{a_j}g_m d\mu,\\
C_{ij}&=\int T^{a_i}h_n\cdot T^{a_j}h_nd\mu,\quad
D_{ij}=\int T^{a_i}g_m\cdot T^{a_j}h_nd\mu.
\end{align*}

\begin{Claim1}
If $a_j-m\geq n+a_i$, then we have $A_{ij}=B_{ij}$ and $C_{ij}=D_{ij}$.
\end{Claim1}
\begin{proof}[Proof of Claim 1]
First, we have
\begin{align*}
B_{ij}
=&\int h_n \circ T^{a_i}\cdot  g_m\circ T^{a_j} d\mu
=\int \Bigl(h_n\cdot g_m\circ T^{a_j-a_i}\Bigr) \circ T^{a_i} d\mu \\
=&\int h_n\cdot g_m\circ T^{a_j-a_i}d\mu.
\end{align*}
Recall that $g_m=E(f|T^m\mathcal{P})$.
Then $g_m$ is $T^m\mathcal{P}$-measurable
and hence $g_m\circ T^{a_j-a_i}$ is $T^{-(a_j-a_i-m)}\mathcal{P}$-measurable.
As $a_j-m\geq n+a_i$, $a_j-a_i-m\geq n$,
we know that $g_m\circ T^{a_j-a_i}$ is $T^{-n}\mathcal{P}$-measurable.
Recall that $h_n=E(g_m|T^{-n}\mathcal{P})$.
By (2) of Section \ref{Conditional expectation}, we have
\begin{align*}
B_{ij} =&\int E(g_m\cdot g_m \circ T^{a_j-a_i}|T^{-n}\mathcal{P}) d\mu
=\int g_m\cdot g_m \circ T^{a_j-a_i} d\mu\\
=&\int \bigl(g_m\cdot g_m \circ T^{a_j-a_i}\bigr)\circ T^{a_i} d\mu = A_{ij}.
\end{align*}
Now we consider the term $C_{ij}$:
\begin{align*}
C_{ij}&=\int h_n\circ T^{a_i}\cdot h_n\circ T^{a_j}d\mu=\int h_n \cdot h_n\circ T^{a_j-a_i} d\mu.
\end{align*}
As $a_j-m\geq n+a_i$, $a_j\geq a_i$,
we have that $E(g_m|T^{-n}\mathcal{P})\circ T^{a_j-a_i}$ is $T^{-n}\mathcal{P}$-measurable.
Recall that $h_n=E(g_m|T^{-n}\mathcal{P})$.
By (2) of Section \ref{Conditional expectation}, we have
\begin{align*}
C_{ij}=&\int E\Bigl(g_m \cdot h_n\circ T^{a_j-a_i}|T^{-n}\mathcal{P}\Bigr)d\mu
=\int g_m \cdot h_n\circ T^{a_j-a_i}d\mu\\
=&\int g_m\circ T^{a_i}\cdot h_n\circ T^{a_j} d\mu=D_{ij}.
\end{align*}
This ends the proof of Claim 1.
\end{proof}

Similarly, we have the following

\begin{Claim2}
If $a_i-m\geq n+a_j$, then we have $A_{ij}=D_{ij}$ and $B_{ij}=C_{ij}$.
\end{Claim2}

By H\"older's inequality, it is easy to see that
\[\max\Bigl\{\bigl|A_{ij}\bigr|, \bigl|B_{ij}\bigr|, \bigl|C_{ij}\bigr|, \bigl|D_{ij}\bigr|\Bigr\}
\leq \Vert g_m\Vert_{L^2(\mu)}^2.\]
Note that the sequence $\{a_k\}$ satisfies Condition ($*$).
For $n+m-1$ there exists $N_0>0$ such that whenever $N\geq N_0$ it holds that
\[\frac{1}{N^2}\#\{(i,j)\in [1,N]^2\colon |a_j-a_i|\leq n+m-1\}<
\frac{\eps^2}{4\Vert g_m\Vert_{L^2(\mu)} ^2}.\]
Therefore, when $N\geq N_0$ we have
\begin{align*}
&\biggl \Vert \frac{1}{N}\sum_{k=1}^{N} (T^{a_k}g_m-T^{a_k}h_n)
\biggr\Vert_{L^2(\mu)}^2\\
&\qquad =\frac{1}{N^2}\sum_{i,j=1}^{N} \bigl(A_{ij}-B_{ij}+C_{ij}-D_{ij}\bigr)
= \frac{1}{N^2} \sum_{\substack{i,j=1\\|a_j-a_i| \leq n+m-1}}^{N}
\bigl(A_{ij}-B_{ij}+C_{ij}-D_{ij}\bigr)\\
&\qquad\leq 4\Vert g_m\Vert_{L^2(\mu)}^2\cdot
\frac{1}{N^2}\#\{(i,j)\in [1,N]^2\colon |a_j-a_i|\leq n+m-1\}
< \eps^2.
\end{align*}
To sum up, for $N\geq N_0$ we have
\begin{align*}
&\biggl\Vert \frac{1}{N}\sum_{k=1}^{N}
\Bigl(T^{a_k}f- T^{a_k}E(f|P_\mu(T))\Bigr)\biggr\Vert_{L^2(\mu)}\\
&\qquad\leq \biggl\Vert \frac{1}{N}\sum_{k=1}^{N} (T^{a_k}f- T^{a_k}g_m)\biggr\Vert_{L^2(\mu)}+
\biggl \Vert \frac{1}{N}\sum_{k=2}^{N} (T^{a_k}g_m-T^{a_k}h_n)
\biggr\Vert_{L^2(\mu)}\\
&\qquad\qquad +\biggl\Vert \frac{1}{N}\sum_{k=1}^{N} \bigl(T^{a_k}h_n-
T^{a_k}E(g_m|P_\mu(T))\bigr)\biggr\Vert_{L^2(\mu)}\\
&\qquad\qquad +
\biggl\Vert \frac{1}{N}\sum_{k=1}^{N} \bigl(T^{a_k}E(g_m|P_\mu(T))-
T^{a_k}E(f|P_\mu(T))\bigr)\biggr\Vert_{L^2(\mu)}\\
&\qquad< \eps+\eps+\eps+\eps=4\eps.
\end{align*}
This ends the proof.
\end{proof}

\begin{rem}
We also refer to \cite{DL96} for a variant of Theorem \ref{thm-factor-Pinkser} which deals with polynomial iterates and pointwise convergence.
\end{rem}

\subsection{Good sequences for pointwise convergence}

Similar to the ergodic decomposition theorem,
we have the following decomposition of an invariant measure with respect to any pointwise good sequence.

\begin{thm}\label{thm:tau-x}
Let $\{a_k\}$ be a pointwise good sequence of positive integers.
Suppose that $(X,T)$ is a topological dynamical system and $\mu\in M(X,T)$.
Then there exists a disintegration of $\mu$,
\[\mu=\int \tau_xd\mu(x),\]
in the sense that there exists a Borel subset $X_0$ of $X$ with $\mu(X_0)=1$ such that
for any $x\in X_0$ and $f\in C(X)$, it holds that
\[\lim_{N\to\infty}\frac{1}{N}\sum_{k=1}^{N}f(T^{a_k}x)=\int fd\tau_x\]
and
\[\int \int fd\tau_xd\mu(x)=\int fd\mu.\]
\end{thm}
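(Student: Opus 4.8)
The plan is to build the family $\{\tau_x\}$ by applying the pointwise ergodic hypothesis to a countable dense subalgebra of $C(X)$ and then extending by density. First I would fix a countable set $\{f_i\}_{i=1}^\infty\subset C(X)$ that is dense in $C(X)$ (in the supremum norm) and contains the constant function $1$; since $X$ is a compact metric space, such a set exists. For each $i$, apply the definition of a pointwise good sequence to the measure-preserving system $(X,\mathcal{B}_\mu,\mu,T)$ and the function $f_i\in L^2(\mu)$: there is a Borel set $X_i$ with $\mu(X_i)=1$ on which $\frac{1}{N}\sum_{k=1}^N f_i(T^{a_k}x)$ converges. Put $X_0'=\bigcap_{i\ge1}X_i$, so $\mu(X_0')=1$ and for every $x\in X_0'$ and every $i$ the limit $\Lambda_x(f_i):=\lim_N\frac1N\sum_{k=1}^N f_i(T^{a_k}x)$ exists.

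Next I would check that, for $x\in X_0'$, the functional $\Lambda_x$ extends to a positive bounded linear functional on all of $C(X)$. For any finite rational linear combination $g=\sum c_i f_i$ the average $\frac1N\sum_k g(T^{a_k}x)$ converges to $\sum c_i\Lambda_x(f_i)$, and $|\frac1N\sum_k g(T^{a_k}x)|\le\|g\|_\infty$, so $\Lambda_x$ is Lipschitz on this dense subspace and extends uniquely to a bounded linear functional on $C(X)$ with $\|\Lambda_x\|\le1$. Moreover $\Lambda_x(1)=1$ and $\Lambda_x(f)\ge0$ whenever $f\ge0$ (approximate $f$ uniformly by elements $g$ of the subalgebra with $g\ge -\eps$; then each average is $\ge-\eps$, hence $\Lambda_x(g)\ge-\eps$, and let $\eps\to0$). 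By the Riesz representation theorem there is a unique $\tau_x\in\mathcal{M}(X)$ with $\Lambda_x(f)=\int f\,d\tau_x$ for all $f\in C(X)$. A small point: once the identity holds for all $f$ in a uniformly dense set and both sides are continuous in $f$ with respect to $\|\cdot\|_\infty$ (the left side because the $N$-averages are uniformly Lipschitz, the right side because $\tau_x$ is a probability measure), it holds for all $f\in C(X)$; this gives the first displayed formula on $X_0:=X_0'$.

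It remains to verify the disintegration identity $\int\!\int f\,d\tau_x\,d\mu(x)=\int f\,d\mu$ and the measurability needed to make sense of the outer integral. For each fixed $i$, the map $x\mapsto\int f_i\,d\tau_x=\Lambda_x(f_i)$ is a pointwise limit (on $X_0$) of the Borel functions $x\mapsto\frac1N\sum_{k=1}^N f_i(T^{a_k}x)$, hence Borel measurable; by the uniform-Lipschitz extension, $x\mapsto\int f\,d\tau_x$ is Borel for every $f\in C(X)$. For the integral identity, apply the bounded convergence theorem: $\frac1N\sum_{k=1}^N f(T^{a_k}x)\to\int f\,d\tau_x$ pointwise on $X_0$ and is bounded by $\|f\|_\infty$, so
\[
\int\Bigl(\int f\,d\tau_x\Bigr)d\mu(x)
=\lim_{N\to\infty}\frac1N\sum_{k=1}^N\int f(T^{a_k}x)\,d\mu(x)
=\lim_{N\to\infty}\frac1N\sum_{k=1}^N\int f\,d\mu
=\int f\,d\mu,
\]
where the middle equality uses $T$-invariance of $\mu$. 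Finally, $\widehat\alpha$-measurability of the $\tau_x$ in the sense of disintegration over the appropriate sub-$\sigma$-algebra follows from the standard uniqueness of disintegrations once we know $\tau_x$ is the conditional measure; but strictly speaking the statement here only asserts the averaging identity and the a.e.\ convergence, both of which we have established. The main obstacle, such as it is, is purely bookkeeping: ensuring the exceptional null sets from the countably many applications of pointwise convergence are collected into a single full-measure set before extending by density, and noting that uniform density in $C(X)$ (not merely $L^2$) is what lets us pass from the countable family to all continuous $f$; there is no genuine analytic difficulty beyond invoking the pointwise good hypothesis.
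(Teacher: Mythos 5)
Your proof is correct and follows essentially the same route as the paper's: apply the pointwise good hypothesis to a countable dense subset of $C(X)$, intersect the exceptional null sets, obtain $\tau_x$ via the Riesz representation theorem, and deduce the integral identity from dominated (bounded) convergence together with the $T$-invariance of $\mu$. The additional care you take with the density extension, positivity of $\Lambda_x$, and Borel measurability of $x\mapsto\int f\,d\tau_x$ merely makes explicit details the paper leaves implicit.
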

\begin{proof}
Fix a countable dense subset $\{f_n\}$ of
$C(X)$.
As $\{a_k\}$ is pointwise good, there exists a Borel subset $X_0$ of $X$ with $\mu(X_0)=1$
such that for every $x\in X_0$ and $f_n$, we have
\[\lim_{N\to\infty}\frac{1}{N}\sum_{k=1}^{N}f_n(T^{a_k}x)=\overline{f}_n(x).\]
Fix $x\in X_0$. Define
\begin{align*}
L_x\colon C(X)\to \mathbb{R}, \;\; f\mapsto \lim_{N\to\infty}\frac{1}{N}\sum_{k=1}^{N}f(T^{a_k}x).
\end{align*}
Since $\{f_n\}$ is dense in $C(X)$ endowed with the supremum norm, $L_x$ is well defined.
Moreover, $L_x$ is a positive linear functional with $L_x(1)=1$.
By the Riesz Representation Theorem, there exists $\tau_x\in M(X)$ such that for any $f\in C(X)$,
\[\lim_{N\to\infty}\frac{1}{N}\sum_{k=1}^{N}f(T^{a_k}x)=\int fd\tau_x.\]
By Lebesgue's Dominated Convergence Theorem,
\begin{align*}
\int \left(\int fd\tau_x\right)d\mu(x)&=
    \int \left(\lim_{N\to\infty}\frac{1}{N}\sum_{k=1}^{N}f(T^{a_k}x)\right)d\mu(x)\\
&= \lim_{N\to\infty}\frac{1}{N}\sum_{k=1}^{N} \int f(T^{a_k}x)d\mu(x)=\int fd\mu.
\end{align*}
\end{proof}

\begin{thm} \label{thm:Pinsker-constant-atom}
Let $\{a_k\}$ be a pointwise good sequence,
$(X,T)$ a topological dynamical system and $\mu\in M(X,T)$.
If the Pinsker $\sigma$-algebra $P_\mu(T)$ is a
characteristic $\sigma$-algebra for the sequence $\{a_k\}$, then
there exists a Borel subset $X_1$ of $X$ with $\mu(X_1)=1$ such that
for any $x\in X_1$ and $f\in C(X)$, it holds that
\[\int fd\tau_x=\int\left(\int fd\tau_y\right)d\mu_x(y),\]
where
\[\mu=\int \tau_xd\mu(x)\quad \text{ and } \quad \mu=\int \mu_xd\mu(x)\]
are the disintegrations of $\mu$ as in Theorem \ref{thm:tau-x}
and over $P_\mu(T)$ respectively.
\end{thm}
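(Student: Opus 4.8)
The plan is to show that the two disintegrations $\mu=\int\tau_x\,d\mu(x)$ and $\mu=\int\mu_x\,d\mu(x)$ are compatible in the sense that averaging $\tau_y$ over the fibre $\mu_x$ returns $\tau_x$ for a.e.\ $x$. The key observation is that, because $\{a_k\}$ is pointwise good, the map $x\mapsto\int f\,d\tau_x=\lim_N\frac1N\sum_{k=1}^N f(T^{a_k}x)$ is (a.e.-defined and) measurable, and moreover it is \emph{invariant of the tail}: it equals $\lim_N\frac1N\sum_{k=1}^N (T^{a_k}f)(x)$, so it should be measurable with respect to the Pinsker $\sigma$-algebra $P_\mu(T)$. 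To make this precise I would work in $L^2(\mu)$: fix $f\in C(X)$ and consider the averages $S_N f=\frac1N\sum_{k=1}^N T^{a_k}f$. Since $P_\mu(T)$ is characteristic for $\{a_k\}$, we have $S_N f - S_N E(f|P_\mu(T))\to 0$ in $L^2(\mu)$. On the other hand, pointwise goodness gives $S_Nf\to \Phi_f$ pointwise $\mu$-a.e.\ and, by dominated convergence (the $S_N f$ are uniformly bounded by $\|f\|_\infty$), also in $L^2(\mu)$, where $\Phi_f(x):=\int f\,d\tau_x$. Likewise $S_N\big(E(f|P_\mu(T))\big)$ converges in $L^2(\mu)$ — here I would first approximate $E(f|P_\mu(T))$ in $L^2$ by a continuous function $g$, apply pointwise goodness to $g$, and control the error uniformly in $N$ by $\|E(f|P_\mu(T))-g\|_{L^2}$, so that $S_N\big(E(f|P_\mu(T))\big)$ is $L^2$-Cauchy and converges to some limit $\Psi_f$. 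Combining the three facts yields $\Phi_f=\Psi_f$ in $L^2(\mu)$.

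The next step is to identify $\Psi_f$ with $E(\Phi_f\,|\,P_\mu(T))$. Since each $T^{a_k}$ commutes with conditional expectation onto the $T$-invariant $\sigma$-algebra $P_\mu(T)$, we have $S_N\big(E(f|P_\mu(T))\big)=E(S_Nf\,|\,P_\mu(T))$ for every $N$; passing to the $L^2$-limit and using continuity of $E(\cdot\,|\,P_\mu(T))$ on $L^2(\mu)$ gives $\Psi_f=E(\Phi_f\,|\,P_\mu(T))$. Hence $\Phi_f=E(\Phi_f\,|\,P_\mu(T))$ $\mu$-a.e., i.e.\ $x\mapsto\int f\,d\tau_x$ is $P_\mu(T)$-measurable (mod $\mu$). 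Now invoke the disintegration over $P_\mu(T)$: by property \eqref{meas3}, for any $\mathcal{B}_X$-measurable integrable $h$ one has $E(h\,|\,P_\mu(T))(x)=\int h\,d\mu_x$ for $\mu$-a.e.\ $x$; applying this with $h=\Phi_f$ and using that $\Phi_f$ is already $P_\mu(T)$-measurable gives $\Phi_f(x)=\int \Phi_f(y)\,d\mu_x(y)$ for $\mu$-a.e.\ $x$, which is exactly the claimed identity for the fixed $f$.

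Finally, to get a single full-measure set $X_1$ working for all $f\in C(X)$ simultaneously, I would fix a countable dense subset $\{f_n\}$ of $C(X)$, take $X_1$ to be the intersection of the a.e.-sets obtained above for each $f_n$ together with $X_0$ from Theorem~\ref{thm:tau-x} and the full-measure set on which the disintegration $\mu=\int\mu_x\,d\mu(x)$ behaves well, and then extend the identity from the dense family to all $f\in C(X)$ by uniform approximation, using that for each fixed $x\in X_1$ both sides $f\mapsto\int f\,d\tau_x$ and $f\mapsto\int(\int f\,d\tau_y)\,d\mu_x(y)$ are bounded linear functionals of norm one on $C(X)$. The main obstacle I anticipate is the careful justification that $S_N\big(E(f|P_\mu(T))\big)$ converges and that its limit is $E(\Phi_f\,|\,P_\mu(T))$: one must pass from continuous functions, where pointwise goodness applies directly, to the merely $L^2$ function $E(f|P_\mu(T))$, and simultaneously reconcile the pointwise (a.e.) convergence coming from pointwise goodness with the $L^2$ convergence coming from the characteristic-$\sigma$-algebra hypothesis; the uniform bound $\|S_N g\|_\infty\le\|g\|_\infty$ and the $L^2$-contractivity of conditional expectation are the tools that make this work.
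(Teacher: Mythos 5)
Your proposal is correct and follows essentially the same route as the paper: both proofs show that the limit function $x\mapsto\int f\,d\tau_x$ is $P_\mu(T)$-measurable using the characteristic-$\sigma$-algebra hypothesis, then apply the disintegration property \eqref{meas3}, and finish with a countable dense family in $C(X)$ plus uniform approximation. The only (harmless) difference is in the middle step: the paper applies pointwise goodness directly to the $L^2$ function $E(f|P_\mu(T))$ and extracts an a.e.-convergent subsequence from the $L^2$-null difference, whereas you work in $L^2$ throughout and obtain $P_\mu(T)$-measurability via the identity $\frac{1}{N}\sum_{k=1}^N T^{a_k}E(f|P_\mu(T))=E\bigl(\frac{1}{N}\sum_{k=1}^N T^{a_k}f\,\big|\,P_\mu(T)\bigr)$, which is legitimate since $P_\mu(T)$ is fully $T$-invariant.
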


\begin{proof}
As $\{a_k\}$ is pointwise good,
for every $f\in C(X)$ there exist $\bar f$ and $f^*$ in $L^2(\mu)$ such that
\[\frac{1}{N}\sum_{k=1}^N T^{a_k} f\to \bar f
\;\;\text{and}\;\; \frac{1}{N}\sum_{k=1}^N T^{a_k}E(f|P_\mu(T))\to f^*\]
$\mu$-almost everywhere.
As the Pinsker $\sigma$-algebra $P_\mu(T)$ is a
characteristic $\sigma$-algebra for the sequence $\{a_k\}$,
by the definition we have
\begin{equation*}
\frac{1}{N}\sum_{k=1}^{N} T^{a_k}f-
\frac{1}{N}\sum_{k=1}^{N} T^{a_k}E(f|P_\mu(T))\to 0
\end{equation*}
as $N\to\infty$ in $L^2(\mu)$.
So there exists a strictly increasing sequence $\{N_i\}$ of positive integers such that
\[
\frac{1}{N_i}\sum_{k=1}^{N_i} T^{a_k}f-
\frac{1}{N_i}\sum_{k=1}^{N_i} T^{a_k}E(f|P_\mu(T))\to0
\]
as $i\to\infty$ $\mu$-almost everywhere. Hence $\bar f(x)= f^*(x)$ for $\mu$-a.e. $x\in X$.
Clearly, $f^*$ is $P_\mu(T)$-measurable, so is $\bar f$.
Let
\[\mu=\int \tau_xd\mu(x)\]
be the disintegration of $\mu$
as in Theorem \ref{thm:tau-x}.
Then $\bar f(x)=\int f d\tau_x$ for $\mu$-a.e. $x\in X$.
Let
\[\mu=\int \mu_xd\mu(x)\]
be the disintegration of $\mu$ over the Pinsker $\sigma$-algebra $P_\mu(T)$.
As $\bar f$ is $P_\mu(T)$-measurable, by \eqref{meas3}
we have $\bar f(x)=\int \bar f d\mu_x$ for $\mu$-a.e. $x\in X$.

Now fix a countable dense subset $\{f_n\}$ of $C(X)$.
By the above discussion, there exists a Borel subset $X_1$ of $X_0$ with $\mu(X_1)=1$
such that for every $x\in X_1$ and $n\in\mathbb{N}$,
\begin{equation}\label{eq:fn-taux}
\int f_n d\tau_x = \int\left(\int f_n d\tau_y\right)d\mu_x(y).
\end{equation}
As any function in $C(X)$ can be uniformly approximated by $\{f_n\}$,
\eqref{eq:fn-taux} holds for all $f\in C(X)$.
\end{proof}

\begin{rem}
Let $\alpha$ be a measurable partition generating $P_\mu(T)$.
Then $\mu_x(\alpha(x))=1$ for $\mu$-a.e. $x\in X$, where $\alpha(x)$ is the atom of $\alpha$ containing $x$.
If a function $f$ is $P_\mu(T)$-measurable, then we know from \eqref{meas3} that
for $\mu$-a.e. $x\in X$,
$f$ is a constant almost everywhere with respect to $\mu_x$ on $\alpha(x)$.
By Theorem \ref{thm:Pinsker-constant-atom},
if a sequence is pointwise good and satisfies Condition ($*$), then
the limit of the ergodic average along this sequence
is a constant on any atom of the generating partition
of $P_\mu(T)$.
\end{rem}

If $(X,\mathcal{B},\mu,T)$ is a Kolmogorov system, then the Pinsker factor is trivial.
This yields the following.
\begin{cor}
Let $\{a_k\}$ be a pointwise good sequence satisfying Condition ($*$) and $(X,\mathcal{B},\mu,T)$ a Kolmogorov system.
Then for every $f\in L^2(\mu)$,
\begin{equation*}
\lim_{N\to\infty}\frac{1}{N}\sum_{k=1}^{N}f(T^{a_k}x) = \int fd\mu
\end{equation*}
holds for $\mu$-a.e. $x\in X$.
\end{cor}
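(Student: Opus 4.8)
The plan is to combine Theorem \ref{thm-factor-Pinkser} with the defining property of a pointwise good sequence, exploiting that a Kolmogorov system has trivial Pinsker $\sigma$-algebra. First I would recall that for a Kolmogorov system $(X,\mathcal{B},\mu,T)$ the Pinsker $\sigma$-algebra $P_\mu(T)$ equals $\{\emptyset,X\}$ modulo $\mu$, so that for every $f\in L^2(\mu)$ the conditional expectation $E(f|P_\mu(T))$ is the constant $\int f\,d\mu$. Since $\{a_k\}$ satisfies Condition ($*$), Theorem \ref{thm-factor-Pinkser} shows that $P_\mu(T)$ is a characteristic $\sigma$-algebra for $\{a_k\}$; unwinding the definition,
\[
\frac{1}{N}\sum_{k=1}^{N} T^{a_k} f-\frac{1}{N}\sum_{k=1}^{N} T^{a_k} E(f|P_\mu(T))\to 0
\]
in $L^2(\mu)$ as $N\to\infty$. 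But $\frac{1}{N}\sum_{k=1}^{N} T^{a_k}E(f|P_\mu(T))=\frac{1}{N}\sum_{k=1}^{N} T^{a_k}\bigl(\int f\,d\mu\bigr)=\int f\,d\mu$ for every $N$, because constants are $T$-invariant. Hence $\frac{1}{N}\sum_{k=1}^{N} f(T^{a_k}x)\to\int f\,d\mu$ in $L^2(\mu)$.

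Next I would upgrade this $L^2$-convergence to $\mu$-a.e. convergence using that $\{a_k\}$ is pointwise good: by definition there is a measurable function $\bar f$ with $\frac{1}{N}\sum_{k=1}^{N} f(T^{a_k}x)\to\bar f(x)$ for $\mu$-a.e. $x\in X$. Since the same averages also converge to $\int f\,d\mu$ in $L^2(\mu)$, one can extract a subsequence $\{N_i\}$ along which the convergence to $\int f\,d\mu$ holds $\mu$-a.e.; comparing this with the $\mu$-a.e. limit $\bar f$ of the full sequence forces $\bar f(x)=\int f\,d\mu$ for $\mu$-a.e. $x$, which is the assertion.

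The argument has essentially no hard part. The only point requiring a little care is the reconciliation of the $L^2$-limit with the pointwise limit, which is the standard fact that a sequence converging in $L^2$ and converging pointwise $\mu$-a.e. must have the two limits coincide $\mu$-a.e. (pass to a further subsequence of the $L^2$-convergent sequence that converges $\mu$-a.e.). Everything else is a direct appeal to Theorem \ref{thm-factor-Pinkser}, the triviality of the Pinsker $\sigma$-algebra for Kolmogorov systems, and the definition of a pointwise good sequence.
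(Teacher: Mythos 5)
Your proof is correct. The paper itself treats this corollary as immediate from Theorem \ref{thm:Pinsker-constant-atom}: since the Pinsker $\sigma$-algebra of a Kolmogorov system is trivial, the disintegration over $P_\mu(T)$ is $\mu_x=\mu$ a.e., so $\int f\,d\tau_x=\int\bigl(\int f\,d\tau_y\bigr)d\mu(y)=\int f\,d\mu$ for a.e.\ $x$. You instead argue directly at the $L^2$ level: triviality of $P_\mu(T)$ makes $E(f|P_\mu(T))$ the constant $\int f\,d\mu$, Theorem \ref{thm-factor-Pinkser} then gives $L^2$-convergence of the averages to that constant, and pointwise goodness together with a subsequence extraction identifies the a.e.\ limit. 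The reconciliation step you use is exactly the device inside the paper's proof of Theorem \ref{thm:Pinsker-constant-atom}, so the underlying ideas coincide; but your route has a genuine advantage of fitting the statement's actual generality, since it applies verbatim to any $f\in L^2(\mu)$ on an abstract Kolmogorov measure-preserving system, whereas Theorems \ref{thm:tau-x} and \ref{thm:Pinsker-constant-atom} are formulated for continuous functions on topological systems, so a literal deduction from them would require either an approximation argument or precisely the direct $L^2$ argument you give.
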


\subsection{Examples of pointwise good sequences}
There are many results on pointwise good/bad sequences,
we refer the reader to \cite{J11,PS95} for recent work on this topic.
Here let us list some sequences of positive integers
which are both pointwise good and satisfying Condition ($*$).

\begin{thm}[{\cite[Theorem 1]{B88a}}]
The sequence of prime numbers is pointwise good.
\end{thm}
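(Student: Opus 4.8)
The plan is to follow Bourgain's architecture for pointwise ergodic theorems along arithmetic sets, which rests on two pillars: a maximal inequality together with an oscillation (or jump) inequality. First I would apply the Calder\'on transference principle to reduce matters to the integer shift $(\mathbb Z,+1)$: it suffices to prove, for the prime averages $A_Nf(n)=\frac{1}{\pi(N)}\sum_{p\le N}f(n-p)$ on $\ell^2(\mathbb Z)$ (where $\pi(N)=\#\{p\le N:p\text{ prime}\}$; one may replace this by the comparable von Mangoldt average $\frac1N\sum_{k\le N}\Lambda(k)f(n-k)$), both the maximal bound $\bigl\Vert\sup_N|A_Nf|\bigr\Vert_{\ell^2(\mathbb Z)}\lesssim\Vert f\Vert_{\ell^2(\mathbb Z)}$ and a uniform oscillation bound for the family $\{A_N\}$. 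These estimates transfer to the averages $\frac1N\sum_{k\le N}f(T^{a_k}x)$ on $L^2(\mu)$ of an arbitrary measure-preserving system, and the oscillation bound, being uniform over increasing time-sequences, already forces a.e.\ convergence of $A_Nf$ for every $f\in L^2(\mu)$, which is exactly the assertion that $\{a_k\}$ is pointwise good. (If one wishes, the limit can afterwards be identified by the spectral theorem: $A_N$ acts as the Fourier multiplier $m_N(\theta)=\frac1{\pi(N)}\sum_{p\le N}e(p\theta)$ with $e(t):=e^{2\pi i t}$, and $m_N(\theta)\to\mathbf 1_{\{\theta=0\}}$ off a countable set --- for irrational $\theta$ by equidistribution of $\{p\theta\}$, and for $\theta=a/q$ with $(a,q)=1$ by the prime number theorem in progressions, which gives the value $\mu(q)/\phi(q)$, nonzero only for $q=1$ --- so $A_Nf\to E_T(\{0\})f$; the value of the limit is, however, irrelevant here.)

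The substance is therefore the maximal and oscillation estimates on $\mathbb Z$, which I would obtain by the Hardy--Littlewood circle method applied to the multipliers $m_N$ and their differences. After a dyadic decomposition in $N$, split the circle into major and minor arcs around rationals $a/q$. On the minor arcs the decisive input is Vinogradov's estimate: for $|\theta-a/q|\le q^{-2}$ with $(a,q)=1$ one has $\sum_{k\le N}\Lambda(k)e(k\theta)\ll(Nq^{-1/2}+N^{4/5}+N^{1/2}q^{1/2})\log^{C}N$, and the power saving in $q$ and in $N$ lets one dominate the minor-arc part of $\sup_N|A_Nf|$ on $\ell^2$ by a square function, treated by a Rademacher--Menshov type argument. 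On the major arcs one approximates $m_N(\theta)$ near $a/q$ by $\frac{\mu(q)}{\phi(q)}\,w_N(\theta-a/q)$, where $w_N$ is the multiplier of the continuous average $\frac1N\int_0^N$; after the usual dyadic-in-$q$ organisation and $q$-uniform Gauss-sum bounds, the major-arc maximal operator reduces to the classical Hardy--Littlewood (Birkhoff) maximal function modulated by the arithmetic factors $\mu(q)/\phi(q)$, whose summability in $q$ yields the $\ell^2$ bound. Running the same analysis for the differences $m_N-m_M$ in place of the bare multipliers produces the oscillation inequality.

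The hard part is unquestionably this circle-method step, and within it the minor-arc analysis: it rests on genuinely deep exponential-sum estimates of Vinogradov--Vaughan type, and the major-arc bookkeeping (summing the arithmetic coefficients after the decomposition in $q$ and controlling the passage between the arithmetic and continuous multipliers) also demands care. None of this is elementary, which is why the statement is quoted from \cite{B88a} as a black box; for the present paper this is all that is needed, since $\{a_k\}$ being pointwise good and satisfying Condition ($*$) --- the latter by Lemma \ref{lem:sequence-condition-star} --- lets us invoke our main theorem, Theorem \ref{thm-main-result}.
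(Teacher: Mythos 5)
The paper does not prove this statement at all --- it is imported verbatim from Bourgain \cite{B88a} and used as a black box --- so there is no internal argument to compare with. Your outline faithfully reproduces the architecture of Bourgain's proof: Calder\'on transference to $\ell^2(\mathbb{Z})$, dyadic decomposition of the multipliers $m_N$, circle-method analysis with Vinogradov--Vaughan bounds on the minor arcs and the $\mu(q)/\phi(q)$-weighted continuous approximants on the major arcs, and a maximal inequality supplemented by an oscillation inequality (the latter is indeed indispensable, since unlike in Birkhoff's theorem there is no obvious dense class of functions for which convergence along the primes is trivial, coboundaries not telescoping along primes). As written it is of course only a skeleton: the maximal and oscillation estimates \emph{are} the content of \cite{B88a}, and you correctly defer them to that reference, which is exactly what the paper does.

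One genuine slip occurs in your parenthetical identification of the limit. For $\theta=a/q$ with $(a,q)=1$ the limit of $m_N(\theta)$ is the normalized Ramanujan sum $\mu(q)/\phi(q)$, which vanishes only when $q$ is \emph{not} squarefree; it is nonzero for every squarefree $q$ (for instance $m_N(1/2)\to-1$, since all but one prime is odd). Consequently $A_Nf\to E_T(\{0\})f$ is false in general: an eigenfunction $g$ with $Tg=e^{2\pi i a/q}g$, $q$ squarefree, satisfies $A_Ng\to\frac{\mu(q)}{\phi(q)}\,e^{-2\pi i a/q\,\cdot\,}g$-type limits, e.g.\ $Tg=-g$ gives $A_Ng\to-g\neq 0$. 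This does not damage the statement you are proving --- ``pointwise good'' requires only a.e.\ convergence, not identification of the limit, and indeed the present paper needs Theorems \ref{thm:tau-x} and \ref{thm:Pinsker-constant-atom} precisely because the limit along such sequences need not equal $\int f\,d\mu$ --- but the clause ``nonzero only for $q=1$'' should be deleted.
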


\begin{thm}[{\cite[Theorem 2]{B89}}]\label{pol}
For any polynomial $$p(x)=b_{0}+b_{1}x+\cdots+b_{m}x^{m},$$
where $m\ge1$, $b_0,\cdots,b_m\in\mathbb{R}$, and $b_m>0$,
the sequence $\{[p(k)]\}$ is pointwise good.
\end{thm}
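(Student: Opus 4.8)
The statement is Bourgain's polynomial pointwise ergodic theorem; the plan is to reproduce the circle-method proof. First I would invoke the Calder\'on transference principle to reduce the assertion, for an arbitrary measure-preserving system $(X,\mathcal B,\mu,T)$, to a pair of inequalities for the averaging operators $A_Nf(x)=\frac1N\sum_{k=1}^N f(x+[p(k)])$ on $\ell^2(\mathbb Z)$: a \emph{maximal inequality} $\bigl\|\sup_{N\ge1}|A_Nf|\bigr\|_{\ell^2}\lesssim\|f\|_{\ell^2}$ and an \emph{oscillation inequality} controlling the fluctuations of $N\mapsto A_Nf$. Transference carries both inequalities back to $(X,\mathcal B,\mu,T)$, and together they yield $\mu$-a.e.\ convergence of $\frac1N\sum_{k=1}^N f(T^{[p(k)]}x)$ for every $f\in L^2(\mu)$ (in fact for $f\in L^r(\mu)$, any $r>1$).

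For the maximal inequality I would pass to the Fourier side, where $A_N$ is the Fourier multiplier $m_N(\theta)=\frac1N\sum_{k=1}^N e(\theta[p(k)])$ on $\mathbb T$, and apply the Hardy--Littlewood circle method. On the minor arcs, Weyl's inequality (together with van der Corput's method, which also covers irrational coefficients) gives power savings $|m_N(\theta)|\lesssim N^{-\delta}$ with $\delta=\delta(\deg p)>0$; after a Littlewood--Paley decomposition this handles the minor-arc part of $\sup_N|A_Nf|$ on $\ell^2$ with room to spare. On the major arcs, for $\theta$ near a rational $a/q$ with $q$ not too large, $m_N(\theta)$ is closely approximated by $q^{-1}S(a,q)\,\Phi_N(\theta-a/q)$, where $S(a,q)$ is a complete (Gauss-type) exponential sum with $|S(a,q)|\lesssim q^{1-\delta}$ and $\Phi_N$ is a smooth multiplier essentially equal to the Fourier transform of a continuous polynomial-average kernel on $\mathbb R$. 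The Gauss-sum bound makes the sum over $q$ summable, and the maximal operator attached to the $\Phi_N$'s is dominated by a Stein--Wainger-type maximal function for averages along a polynomial curve, which is bounded on $L^2(\mathbb R)$; combining the contributions proves the maximal bound.

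The main obstacle is passing from the maximal inequality to almost-everywhere convergence: there is no obvious dense class on which convergence is elementary, since even a single character $e(t\cdot)$ requires $\frac1N\sum_{k=1}^N e(t[p(k)])$ to converge. Following Bourgain, I would establish an \emph{oscillation inequality}: there is $C<\infty$, independent of $J$ and of any increasing sequence $N_1<N_2<\dots<N_J$, with
\[
\Bigl\|\Bigl(\sum_{j=1}^{J-1}\bigl(\sup_{N_j\le N<N_{j+1}}|A_Nf-A_{N_j}f|\bigr)^2\Bigr)^{1/2}\Bigr\|_{\ell^2}\le C\|f\|_{\ell^2}.
\]
Such a bound forces $N\mapsto A_Nf$ to be a.e.\ Cauchy, hence convergent. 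Its proof again runs through the circle-method decomposition, but now one must control, \emph{uniformly across all dyadic scales}, the interference between the arithmetic structure (the Gauss sums $S(a,q)$ and the associated singular series) at the various rationals $a/q$ and the continuous oscillatory factors $\Phi_N$, via multi-frequency $L^2$ inequalities, square-function estimates and sharp minor-arc Weyl bounds. This is the part of the argument the introduction rightly calls ``extremely nontrivial'', and is where essentially all the work lies; the reductions around it --- transference, the treatment of irrational coefficients inside the Weyl-sum estimates, and the passage from $L^2$ to $L^r$ --- are comparatively routine.
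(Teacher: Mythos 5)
The paper does not prove this statement: it is quoted directly from Bourgain \cite{B89} (Theorem 2 there), so there is no internal argument to compare yours against. Your outline --- Calder\'on transference, the circle-method maximal inequality with Weyl/van der Corput minor-arc savings and Gauss-sum major-arc approximants dominated by a Stein--Wainger-type continuous maximal function, and an oscillation inequality replacing the missing dense class of functions --- is a faithful roadmap of Bourgain's actual proof in that reference, and you correctly identify the oscillation/multi-frequency step as the heart of the matter. Keep in mind, however, that as written this is a plan rather than a proof: the maximal and oscillation inequalities, which carry essentially all of the difficulty (and in the uniform-in-$J$ form you state go back to Bourgain and to later refinements by Jones, Kaufman, Rosenblatt and Wierdl), are asserted rather than established, so the proposal is complete only modulo an appeal to the very estimates the cited theorem rests on.
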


\begin{rem}
There are very good random sequences of integers that are not too sparse,
including ones that grow faster than any polynomial (see \cite[Proposition~8.2]{B88}).
\end{rem}

Furthermore, a lot of sequences generated by logarithmico-exponential subpolynomials are pointwise good
(for details see \cite[Theorems 3.4, 3.5, 3.8]{BKQW05}).
In particular, one has the following:
\begin{thm}\label{thm:BKQW05}
For every non-integer number $r>0$, the sequence $\{[k^r]\}$ is very good.
\end{thm}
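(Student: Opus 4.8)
The sequence $a_k=[k^r]$ is strictly increasing, hence satisfies Condition~($*$) by Lemma~\ref{lem:sequence-condition-star}; the real content of the statement is to upgrade ``pointwise good'' to ``very good''. The plan is to treat the two halves separately: first establish that $\{[k^r]\}$ is pointwise good, and then show that on an arbitrary ergodic system the resulting a.e.\ limit equals $\int f\,d\mu$. I expect the first half to be the main obstacle, while the second is comparatively soft; in both halves the hypothesis $r\notin\mathbb{Z}$ is what makes things work.

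For the pointwise-good half, the range $0<r<1$ is elementary: since $\#\{k\le N:[k^r]=n\}$ is of order $n^{1/r-1}$, the average $\frac1N\sum_{k\le N}f(T^{[k^r]}x)$ is, up to negligible boundary terms, obtained from the Birkhoff averages $\frac1n\sum_{j\le n}f(T^jx)$ by a regular summation method, hence converges a.e.\ to $\int f\,d\mu$ by Birkhoff's ergodic theorem. So the interesting case is non-integer $r>1$, where one must run the full harmonic-analytic program \cite{BKQW05} in the style of Bourgain. By Calder\'on transference it suffices to prove, on $\ell^2(\mathbb{Z})$, a maximal inequality for the operators $f\mapsto\sup_N\bigl|\frac1N\sum_{k\le N}f(\,\cdot+[k^r])\bigr|$ together with an oscillation (or variational) inequality, the latter serving to promote a.e.\ convergence from a dense subclass to all of $\ell^2$. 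On the Fourier side everything is governed by the exponential sums $\frac1N\sum_{k\le N}e([k^r]\xi)$, where $e(t)=e^{2\pi it}$: on the minor arcs a power saving comes from van der Corput's method or exponent pairs after expanding the fractional part $\{k^r\}$ in its Fourier series, and this is exactly the point where $r\notin\mathbb{Z}$ is needed, so that $\theta k^r$ genuinely oscillates for every $\theta\neq0$ rather than locking onto residue classes; on the major arcs around rationals $p/q$ with $q$ bounded, one compares with the continuous model and with a Ramanujan-type multiplier supported on the rationals, for which classical square-function estimates are available. Summing over dyadic scales yields the two inequalities and hence a.e.\ convergence of $\frac1N\sum_{k\le N}f(T^{[k^r]}x)$ for every measure-preserving system and every $f\in L^2$.

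For the second half, fix an ergodic system $(X,\mathcal{B},\mu,T)$ and $f\in L^2(\mu)$. By the first half, $A_Nf:=\frac1N\sum_{k\le N}T^{[k^r]}f$ converges $\mu$-a.e.\ to some $f^\ast$, and it remains only to identify $f^\ast$ with $\int f\,d\mu$. It suffices to prove $A_Nf\to\int f\,d\mu$ in $L^2(\mu)$, since then a subsequence of $(A_Nf)$ converges a.e.\ to both limits. Subtracting $\int f\,d\mu$, we may assume $\int f\,d\mu=0$; by ergodicity the only invariant functions are constants, so the spectral measure $\sigma_f$ of $f$ with respect to the Koopman operator has no atom at $1$, and the spectral theorem gives
\[\Vert A_Nf\Vert_{L^2(\mu)}^2=\int_{\mathbb{T}\setminus\{1\}}\Bigl|\tfrac1N\sum_{k\le N}z^{[k^r]}\Bigr|^2\,d\sigma_f(z).\]
Since the integrand is bounded by $1$ and $\sigma_f$ is finite, by dominated convergence it is enough to show $\frac1N\sum_{k\le N}z^{[k^r]}\to0$ for every $z=e(\beta)$ with $\beta\in(0,1)$. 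If $\beta=p/q$ is rational in lowest terms, then $e\bigl(\tfrac pq[k^r]\bigr)$ depends only on $[k^r]\bmod q$, and since $r\notin\mathbb{Z}$ the classical equidistribution of $(k^r/q)$ modulo $1$ forces $([k^r]\bmod q)$ to be equidistributed in $\mathbb{Z}/q\mathbb{Z}$, so the average tends to $\frac1q\sum_{a=0}^{q-1}e(pa/q)=0$. If $\beta$ is irrational, then writing $[k^r]=k^r-\{k^r\}$ and expanding the sawtooth in a Fourier series reduces the problem to the vanishing of $\frac1N\sum_{k\le N}e((\beta+m)k^r)$ for each $m\in\mathbb{Z}$, which holds by the classical equidistribution of $(\theta k^r)$ modulo $1$ for $\theta=\beta+m\neq0$ and non-integer $r>0$ (the truncation of the Fourier series being handled in the standard way, e.g.\ via the Erd\H{o}s--Tur\'an inequality); equivalently, $(\beta[k^r])$ is equidistributed modulo $1$. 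Hence $\Vert A_Nf\Vert_{L^2(\mu)}\to0$, so $f^\ast=\int f\,d\mu$ $\mu$-a.e., which is precisely the assertion that $\{[k^r]\}$ is very good.
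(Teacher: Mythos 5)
The first thing to say is that the paper does not prove this statement: Theorem~\ref{thm:BKQW05} is imported verbatim from \cite{BKQW05} (cf.\ the discussion of logarithmico-exponential subpolynomials just before it and in the introduction), so there is no internal proof to compare yours against. Judged on its own terms, your proposal is an accurate reconstruction of how such a result is proved, but it is not a complete proof: for non-integer $r>1$ the entire analytic core --- the transference, the maximal and oscillation inequalities, and the major/minor arc exponential-sum estimates that yield a.e.\ convergence of $\frac1N\sum_{k\le N}f(T^{[k^r]}x)$ --- is only described in outline and is ultimately delegated to the very reference \cite{BKQW05} (and to Bourgain's method, cf.\ \cite{B88,B89}). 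So exactly at the point where the real difficulty lies you have a citation rather than an argument; that is no worse than what the paper itself does, but it should be flagged as a citation, not presented as a proof.

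What you add beyond the citation is essentially sound and is the genuinely useful part. The reduction ``pointwise good $+$ $L^2$-convergence of the averages to the mean $\Rightarrow$ very good'' is correct: a.e.\ convergence plus norm convergence identifies the limit along a subsequence, and your spectral computation reduces everything to $\frac1N\sum_{k\le N}e(\beta[k^r])\to0$ for all $\beta\in(0,1)$, which is classical --- for rational $\beta=p/q$ via equidistribution of $k^r/q$ modulo $1$, and for irrational $\beta$ via the sawtooth expansion together with Weyl/van der Corput (for $r>1$) or Fej\'er (for $0<r<1$) applied to $\theta k^r$, $\theta\neq0$; this is indeed precisely where $r\notin\mathbb{Z}$ enters. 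The elementary treatment of $0<r<1$ by regular summation against Birkhoff averages is also fine. One small slip: for $0<r<1$ the sequence $[k^r]$ is \emph{not} strictly increasing (each value $n$ is repeated roughly $n^{1/r-1}$ times), so Lemma~\ref{lem:sequence-condition-star} does not apply as you claim; Condition ($*$) still holds by a direct count of the repetitions (this is what Remark~\ref{rem:good-sequences-examples} has in mind), but in any case Condition ($*$) is not part of the statement you were asked to prove.
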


\begin{thm}[{\cite[Theorem in Appendix]{B89}}]
Let $(X,\mathcal{B},\mu,T)$ be an ergodic measure-preserving system
and $A\in\mathcal{B}$ with $\mu(A)>0$.
Then for $\mu$-a.e. $x\in X$, the sequence $\{n\in\mathbb{N}\colon T^nx\in A\}$ is pointwise good.
\end{thm}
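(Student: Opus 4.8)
This is (the pointwise $L^2$ form of) Bourgain's \emph{return times theorem}, and the plan is to prove it in that guise. Put $S_x=\{n\ge1:T^nx\in A\}$ and let $a_1(x)<a_2(x)<\cdots$ enumerate $S_x$; since $\mu(A)>0$ and $T$ is ergodic, $S_x$ is infinite for $\mu$-a.e.\ $x$, so $\{a_k(x)\}$ is a genuine strictly increasing sequence. The first step is a routine passage from the subsequence averages to weighted ergodic averages. For any measure-preserving system $(Y,\mathcal C,\nu,S)$, any $g\in L^2(\nu)$ and any $y\in Y$, the definition of $a_k(x)$ gives
\[\frac1N\sum_{k=1}^{N}g(S^{a_k(x)}y)=\frac{a_N(x)}{N}\cdot\frac{1}{a_N(x)}\sum_{n=1}^{a_N(x)}\mathbf 1_A(T^nx)\,g(S^ny),\]
while Birkhoff's pointwise ergodic theorem applied to $(X,\mathcal B,\mu,T)$ and $\mathbf 1_A$ yields $N/a_N(x)\to\mu(A)$, hence $a_N(x)/N\to1/\mu(A)$, for $\mu$-a.e.\ $x$. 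Therefore, for such $x$, once the weighted averages $\frac1M\sum_{n=1}^M\mathbf 1_A(T^nx)g(S^ny)$ converge as $M\to\infty$ for $\nu$-a.e.\ $y$, so do the averages $\frac1N\sum_{k=1}^{N}g(S^{a_k(x)}y)$ (to $1/\mu(A)$ times the same limit), the latter being, up to the factor $a_N(x)/N$, a subsequence of the former. So it suffices to show: for $\mu$-a.e.\ $x$, the weights $w_n:=\mathbf 1_A(T^nx)$ form a good universal weight for the $L^2$ pointwise ergodic theorem, i.e.\ for every $(Y,\mathcal C,\nu,S)$ and $g\in L^2(\nu)$ the averages $\frac1M\sum_{n=1}^{M}w_n\,g(S^ny)$ converge for $\nu$-a.e.\ $y$.

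By ergodic decomposition we may assume $(Y,\mathcal C,\nu,S)$ ergodic. The second step uses a maximal inequality to cut the problem down to a dense family of $g$: Bourgain's return times maximal inequality provides a constant $C$ such that, for $\mu$-a.e.\ $x$, $\bigl\Vert\sup_M\bigl|\frac1M\sum_{n=1}^{M}w_n\,g(S^n\cdot)\bigr|\bigr\Vert_{L^2(\nu)}\le C\Vert g\Vert_{L^2(\nu)}$ for all ergodic $(Y,\mathcal C,\nu,S)$ and $g\in L^2(\nu)$; by the Banach principle the set of $g$ for which the weighted averages converge $\nu$-a.e.\ is then closed in $L^2(\nu)$. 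Since the linear span of the eigenfunctions of $S$ together with the orthocomplement $L^2(\nu)\ominus L^2(\mathcal K_Y)$ of $L^2(\mathcal K_Y)$, where $\mathcal K_Y$ denotes the Kronecker factor of $(Y,\mathcal C,\nu,S)$, is dense in $L^2(\nu)$, it is enough to handle these two cases.

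The third step treats both cases using only the dynamics of $(X,T,A)$ -- this is what produces a \emph{single} $\mu$-full set of admissible $x$ good for all second systems. If $g\perp L^2(\mathcal K_Y)$ then the spectral measure $\sigma_g$ of $g$ relative to $S$ is atomless, and expanding the square one gets
\[\Bigl\Vert\frac1M\sum_{n=1}^{M}w_n\,g(S^n\cdot)\Bigr\Vert_{L^2(\nu)}^{2}=\int_{\mathbb T}\Bigl|\frac1M\sum_{n=1}^{M}w_n\,e^{2\pi in\theta}\Bigr|^{2}d\sigma_g(\theta).\]
Writing $\mathbf 1_A=E(\mathbf 1_A\mid\mathcal K_X)+h$ with $h\perp L^2(\mathcal K_X)$: the uniform Wiener--Wintner theorem on $L^2(\mathcal K_X)^{\perp}$ gives $\sup_\theta|\frac1M\sum_{n=1}^{M}h(T^nx)e^{2\pi in\theta}|\to0$ for $\mu$-a.e.\ $x$, while the eigenfunction expansion of $E(\mathbf 1_A\mid\mathcal K_X)$ shows its twisted averages converge, for every $\theta$, to a function of $\theta$ that vanishes off the countable set of $\theta$ with $e^{-2\pi i\theta}$ in the point spectrum of $(X,T)$; hence $\frac1M\sum_{n=1}^{M}w_ne^{2\pi in\theta}$ converges for every $\theta$ to a function vanishing off that countable set, and since $\sigma_g$ is atomless and the integrand is bounded by $1$, dominated convergence forces the integral to $0$. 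Thus the weighted averages tend to $0$ in $L^2(\nu)$ for every such $g$ and $\mu$-a.e.\ $x$, and the maximal inequality upgrades this to $\nu$-a.e.\ convergence to $0$. If instead $g=\sum_jc_j\psi_j$ is a finite combination of eigenfunctions, $\psi_j\circ S=\beta_j\psi_j$, then $\frac1M\sum_{n=1}^{M}w_ng(S^ny)=\sum_jc_j\psi_j(y)\cdot\frac1M\sum_{n=1}^{M}w_n\beta_j^{\,n}$, and the classical Wiener--Wintner theorem applied to $\mathbf 1_A$ in $(X,\mathcal B,\mu,T)$ shows that, for $\mu$-a.e.\ $x$, $\frac1M\sum_{n=1}^{M}w_n\beta^{\,n}$ converges for every $\beta\in\mathbb T$; hence the weighted averages converge for every $y$. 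Intersecting the countably many $\mu$-full sets of $x$ used above (all depending only on $(X,T,A)$) with the Birkhoff set of the first step yields the desired $\mu$-full set of good $x$.

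The hard part is the return times maximal inequality of the second step: this is Bourgain's metric-entropy estimate for the random averaging operators $g\mapsto\frac1M\sum_{n=1}^{M}\mathbf 1_A(T^nx)\,g(S^n\cdot)$, and it is by far the deepest ingredient, the remaining steps being soft modulo the classical and uniform Wiener--Wintner theorems. An alternative to the maximal-inequality route is the structural approach of Bourgain--Furstenberg--Katznelson--Ornstein, which successively peels off isometric (compact) extensions to reduce to the Kronecker case; in either approach essentially all of the analytic difficulty is concentrated in controlling the weighted maximal function uniformly over the second system.
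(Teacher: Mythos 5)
You are attempting to prove a statement that the paper itself does not prove: it is quoted from the appendix of Bourgain's paper \cite{B89} (the appendix due to Bourgain, Furstenberg, Katznelson and Ornstein), so your proposal is really a sketch of the return times theorem. Your skeleton is the standard one and several pieces are fine: the time change $a_N(x)/N\to 1/\mu(A)$ reducing the problem to the weights $w_n=\mathbf{1}_A(T^nx)$, the reduction to a dense class via the Banach principle, and the treatment of the Kronecker part of $L^2(\nu)$ by the Wiener--Wintner theorem (with the full-measure set of $x$ depending only on $(X,T,A)$). But there is a genuine gap at the decisive point. For $g\perp L^2(\mathcal{K}_Y)$ your spectral-measure computation only yields that $\frac1M\sum_{n=1}^{M}w_n\,g(S^n\cdot)$ tends to $0$ in $L^2(\nu)$-\emph{norm}, and the sentence ``the maximal inequality upgrades this to $\nu$-a.e.\ convergence to $0$'' is false: the Banach principle transfers almost everywhere convergence from a dense class of functions to its closure, but it never converts norm convergence of the averages of a single fixed $g$ into pointwise convergence --- a maximal bound controls the supremum, not the oscillation. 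If such an upgrade were available, the return times theorem would be a soft corollary of Wiener--Wintner, which it notoriously is not; the a.e.\ convergence to $0$ on the orthocomplement of the Kronecker factor is exactly where all the depth lies.

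Relatedly, you locate the difficulty in the wrong place. Since $|w_n|\le 1$, the maximal inequality you call ``by far the deepest ingredient'' is immediate: pointwise $\sup_M\bigl|\frac1M\sum_{n=1}^{M}w_n g(S^ny)\bigr|\le\sup_M\frac1M\sum_{n=1}^{M}|g|(S^ny)$, and the right-hand side is bounded in $L^2(\nu)$ by the classical ergodic maximal inequality, uniformly in $x$ and in the second system; no entropy method is needed for that. What genuinely requires work --- and what the BFKO appendix supplies through its orthogonality criterion (for $\mu$-a.e.\ $x$ the sequence $f(T^nx)$, with $f$ orthogonal to the Kronecker factor of $(X,\mu,T)$, is a universally good weight with limit $0$ for \emph{pointwise} averages in every second system), or what Bourgain obtains via oscillation/entropy estimates much finer than an $L^2$ maximal bound --- is precisely the step you replaced by the invalid norm-to-pointwise upgrade. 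To make your outline into a proof you would need to prove that criterion (equivalently, handle a dense subclass of $L^2(\mathcal{K}_Y)^{\perp}$ with genuine $\nu$-a.e.\ convergence), which is the heart of the theorem.
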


\begin{thm}[{\cite[Theorem 1]{BR69}}]
Let $(X,T)$ be a minimal equicontinuous system with
the unique invariant measure $\mu$.
For every $x\in X$ and
every subset $A$ of $X$ with $\mu(A)>0$ and $\mu(\partial A)=0$
(where $\partial A$ is the boundary of $A$),
the sequence $\{n\in\mathbb{N}\colon T^nx\in A\}$ is pointwise good.
\end{thm}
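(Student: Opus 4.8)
The plan is to reduce the assertion to a Wiener--Wintner-type pointwise convergence statement and then run a standard density/maximal-inequality argument. First I would invoke the classical structure theorem for minimal equicontinuous systems: $(X,T)$ is topologically conjugate to a minimal rotation $x\mapsto x+\alpha$ on a compact abelian group $G$ (with dual group $\widehat G$), and under this conjugacy the unique invariant measure $\mu$ becomes the Haar measure of $G$. Fix $x\in X$ and $A$ with $\mu(A)>0$ and $\mu(\partial A)=0$, and set $c_n=\mathbf 1_A(T^nx)$, so that $\{a_k\}$ is the increasing enumeration of $\{n\in\mathbb N:c_n=1\}$. Since $\mu(\partial A)=0$ the function $\mathbf 1_A$ is Riemann integrable, so unique ergodicity gives $\frac1M\sum_{n=1}^M c_n\to\mu(A)>0$, whence $a_k\sim k/\mu(A)$. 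A summation-by-counting comparison then shows that, for every measure-preserving system $(Y,\mathcal C,\nu,S)$, every $g\in L^2(\nu)$ and every point $y$, the averages $\frac1N\sum_{k=1}^N g(S^{a_k}y)$ converge as $N\to\infty$ if and only if the averages $\frac1M\sum_{n=1}^M c_n\,g(S^ny)$ converge as $M\to\infty$ (the two limits differing by the factor $\mu(A)$). Hence it suffices to prove that $\frac1M\sum_{n=1}^M c_n\,g(S^ny)$ converges for $\nu$-a.e.\ $y$.

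Next I would handle weights that are trigonometric polynomials on $G$. Assume first $g\in L^\infty(\nu)$ and, by ergodic decomposition, that $(Y,S,\nu)$ is ergodic. By the Wiener--Wintner ergodic theorem there is a full-measure set $Y_0\subset Y$ such that for every $y\in Y_0$ and every $\lambda$ in the unit circle $\mathbb T$ the averages $\frac1M\sum_{n=1}^M\lambda^n g(S^ny)$ converge. Each character $\chi\in\widehat G$ satisfies $\chi(T^nx)=\chi(\alpha)^n\chi(x)$ with $\chi(\alpha)\in\mathbb T$, so for any finite linear combination $w=\sum_j b_j\chi_j$ and every $y\in Y_0$,
\[\frac1M\sum_{n=1}^M w(T^nx)\,g(S^ny)=\sum_j b_j\chi_j(x)\cdot\frac1M\sum_{n=1}^M\chi_j(\alpha)^n g(S^ny)\]
converges; and $Y_0$ does not depend on the characters used, so one $Y_0$ works for countably many such $w$.

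Then I would pass from trigonometric-polynomial weights to the weight $c_n$ by a Cesàro-mean approximation. Given $\eps>0$, regularity of $\mu$ together with $\mu(\partial A)=0$ yields continuous functions $\phi\le\mathbf 1_A\le\psi$ on $G$ with $\int(\psi-\phi)\,d\mu<\eps$, and Stone--Weierstrass yields a trigonometric polynomial $w$ with $\|w-\phi\|_\infty<\eps$; then $|c_n-w(T^nx)|\le(\psi-\phi)(T^nx)+\eps$, so unique ergodicity gives $\limsup_M\frac1M\sum_{n=1}^M|c_n-w(T^nx)|<2\eps$, uniformly in $x$. Consequently, for $y\in Y_0$,
\[\limsup_{M\to\infty}\Bigl|\frac1M\sum_{n=1}^M c_n g(S^ny)-\frac1M\sum_{n=1}^M w(T^nx)g(S^ny)\Bigr|\le 2\eps\|g\|_\infty,\]
and combining this with the convergence of the $w$-weighted averages (applied along a sequence $\eps_k\to0$, involving only countably many characters) shows that $\bigl(\frac1M\sum_{n=1}^M c_n g(S^ny)\bigr)_M$ is Cauchy for every $y\in Y_0$. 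Finally, to cover all $g\in L^2(\nu)$, note that $|c_n|\le1$ forces $\sup_M\bigl|\frac1M\sum_{n\le M}c_n g(S^ny)\bigr|\le\sup_M\frac1M\sum_{n\le M}|g|(S^ny)$, so the relevant maximal operator obeys the strong $(2,2)$ maximal inequality for ergodic averages; the Banach principle then upgrades a.e.\ convergence from the dense subspace $L^\infty(\nu)$ to all of $L^2(\nu)$, completing the proof that $\{a_k\}$ is pointwise good.

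The main obstacle is the second step: one genuinely needs pointwise convergence of the exponential averages $\frac1M\sum_{n=1}^M\lambda^n g(S^ny)$ simultaneously for all frequencies $\lambda\in\mathbb T$ and $\nu$-a.e.\ $y$, which is precisely (the difficult part of) the Wiener--Wintner ergodic theorem — a nontrivial input. Everything else is soft: the maximal inequality is immediate since the weights are bounded by $1$, and the passage from a dense class to $L^2$ is the usual Banach-principle argument; the only point needing care is that the approximation of $\mathbf 1_A$ by trigonometric polynomials must be controlled in Cesàro mean uniformly in the base point $x$, which the unique ergodicity of the equicontinuous system provides.
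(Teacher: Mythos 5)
The paper does not prove this statement at all: it is quoted as background from Brunel and Keane \cite{BR69}, so there is no internal proof to compare with, and your argument has to be judged on its own. As far as I can check it is correct, and it is essentially the modern ``bounded Besicovitch weight'' proof rather than Brunel--Keane's original one. Your chain of steps is sound: a minimal equicontinuous system is conjugate to a minimal rotation on a compact abelian group with Haar measure; since $\mu(\partial A)=0$ and the system is uniquely ergodic, $\frac1M\sum_{n\leq M}\mathbf{1}_A(T^nx)\to\mu(A)>0$ for \emph{every} $x$, which both makes the return-time sequence infinite with positive density and justifies the exact identity $\frac1N\sum_{k\leq N}g(S^{a_k}y)=\frac{a_N}{N}\cdot\frac1{a_N}\sum_{n\leq a_N}c_n\,g(S^ny)$ with $a_N/N\to1/\mu(A)$; the Wiener--Wintner theorem handles trigonometric-polynomial weights for a single full-measure set of $y$ simultaneously in all frequencies; the sandwich $\phi\leq\mathbf{1}_A\leq\psi$ gives the Ces\`aro-mean approximation of $c_n$ by such weights uniformly in the base point; and the maximal inequality plus the Banach principle passes from $L^\infty$ to $L^2$. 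Brunel and Keane proceed differently: they isolate the notion of a \emph{uniform sequence} and prove the pointwise theorem along it directly, essentially by applying the Birkhoff theorem to the product of the equicontinuous system with $(Y,\mathcal{C},\nu,S)$ and using equicontinuity together with $\mu(\partial A)=0$ to upgrade convergence from almost every base point to the given point $x$; this avoids Wiener--Wintner entirely, whereas your route is shorter but leans on that nontrivial classical input (which you correctly flag). Two minor points to tighten: the reduction ``by ergodic decomposition'' to ergodic $(Y,\nu,S)$ requires $(Y,\nu)$ to be a Lebesgue space (or first pass to the separable factor generated by $g$, or invoke a non-ergodic form of Wiener--Wintner); and the trigonometric polynomial $w$ approximating the real function $\phi$ is in general complex-valued, which is harmless but worth a word.
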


\begin{thm}[{\cite[Theorem 3]{L92}}]
Return time sequences for dynamical systems which are abelian extensions of
translations on compact abelian groups are pointwise good.
Moreover, the Morse sequence is very good.
\end{thm}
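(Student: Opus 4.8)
The plan is to derive both assertions from the classical Wiener--Wintner ergodic theorem and a single application of van der Corput's inequality, exploiting the algebraic structure of abelian group extensions of rotations. Write the system as $(X,T)$ with $X=G\times H$, where $G,H$ are compact metric abelian groups, $T(g,h)=(\sigma g,\phi(g)h)$, $\sigma$ is the translation by some $\alpha\in G$, and $\phi\colon G\to H$ is a continuous cocycle; by passing to ergodic components we may assume the relevant invariant measure $\mu$ is ergodic. Fix a Borel set $A\subseteq X$ with $\mu(A)>0$, a point $x\in X$, and let $a_1<a_2<\dotsb$ enumerate $\{n\ge1\colon T^nx\in A\}$. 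The goal is to show that for $\mu$-a.e.\ $x$, and for every measure-preserving system $(Y,\mathcal C,\nu,S)$ and every $f\in L^2(\nu)$, the averages $\frac1N\sum_{k=1}^N f(S^{a_k}y)$ converge for $\nu$-a.e.\ $y$.

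The first step reduces the return-time statement to a weighted ergodic theorem. Since the weight $n\mapsto\mathbf 1_A(T^nx)$ vanishes off $\{a_k\}$, we have $\sum_{k=1}^N f(S^{a_k}y)=\sum_{n=1}^{a_N}\mathbf 1_A(T^nx)f(S^ny)$, and Birkhoff's theorem ($N/a_N\to\mu(A)$) shows that the return-time averages converge for a.e.\ $y$ if and only if the weighted averages $\frac1M\sum_{n\le M}\mathbf 1_A(T^nx)f(S^ny)$ do. So it suffices to prove that for $\mu$-a.e.\ $x$ the sequence $n\mapsto\mathbf 1_A(T^nx)$ is a good weight for the pointwise ergodic theorem against every system $(Y,S)$. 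Approximating $\mathbf 1_A$ in $L^2(\mu)$ by a trigonometric polynomial $P=\sum_{\mathrm{finite}}c_j(\chi_j\otimes\psi_j)$ (Fourier analysis on $G\times H$, $\chi_j\in\widehat G$, $\psi_j\in\widehat H$) and estimating the error $\frac1M\sum_{n\le M}(\mathbf 1_A-P)(T^nx)f(S^ny)$ by Cauchy--Schwarz together with Birkhoff's theorem applied in both $(X,T)$ and $(Y,S)$, one reduces further to showing: for each character $\chi\otimes\psi$ and $\mu$-a.e.\ $x$, the sequence $n\mapsto(\chi\otimes\psi)(T^nx)$ is a good weight.

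Evaluating along the orbit, with $\rho=\psi\circ\phi\colon G\to\mathbb T$ and $\lambda=\chi(\alpha)\in\mathbb T$, one has $(\chi\otimes\psi)(T^n(g,h))=\chi(g)\psi(h)\cdot\lambda^n\rho_n(g)$ with $\rho_n(g)=\prod_{i=0}^{n-1}\rho(\sigma^ig)$; dropping the constant factor, one must show that $w_n(g):=\lambda^n\rho_n(g)$ is a good weight for a.e.\ $g$. The cocycle identity gives, for each fixed $h\ge1$,
\[
w_{n+h}\,\overline{w_n}=\lambda^{h}\prod_{i=0}^{h-1}\rho(\sigma^{n+i}g)=\lambda^{h}\,R_h(\sigma^ng),\qquad R_h:=\prod_{i=0}^{h-1}\rho\circ\sigma^i,
\]
where $R_h$ is independent of $n$; writing $\sigma^ng=g+n\alpha$ and approximating the continuous function $R_h$ uniformly by trigonometric polynomials on $G$ shows that $w_{n+h}\overline{w_n}$ is, up to an error uniformly small in $n$, a finite combination of pure exponential weights $n\mapsto(\eta(\alpha))^n$, $\eta\in\widehat G$. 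By the Wiener--Wintner ergodic theorem each exponential weight is a good weight, hence so is $w_{n+h}\overline{w_n}$ for a.e.\ $g$; van der Corput's inequality for weighted averages then upgrades good-weight-ness of all the differences $w_{n+h}\overline{w_n}$ to good-weight-ness of $w_n$ itself. Intersecting the resulting full-measure sets of $g$ over the countably many characters and over $h$, and using Fubini to pass from ``a.e.\ $(x,y)$'' to ``for a.e.\ $x$, a.e.\ $y$'', proves that return-time sequences of abelian extensions of rotations are pointwise good. I expect this van der Corput step to be the main obstacle: one must invoke the ``good weights'' van der Corput lemma with exactly the right hypotheses (existence of the auxiliary correlation limits, the uniform bound $|w_n|\le1$) so that it genuinely applies to the sequences above, and track carefully the Cauchy--Schwarz and maximal-type estimates needed to pass from exponentials to $R_h$ and from $L^\infty$ to $L^2$ test functions. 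The point of the algebra is that one van der Corput difference collapses the cocycle --- because an abelian extension of a rotation sits two levels above a point --- so everything reduces to Wiener--Wintner; for taller distal towers one would iterate, but height two is all that is needed.

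Finally, for the Morse sequence one realizes the Morse/Thue system as the abelian $(\mathbb Z/2\mathbb Z)$-extension of the dyadic odometer (translation by $1$ on $\mathbb Z_2$) given by the Thue--Morse cocycle; this system is minimal and uniquely ergodic, and $\{n\colon t_n=1\}$ is one of its return-time sequences, hence pointwise good by the above. To upgrade ``pointwise good'' to ``very good'', use the identity $\mathbf 1_{\{t_n=1\}}=\tfrac12(1-(-1)^{s_2(n)})$, where $s_2$ denotes the binary digit sum: this reduces, for an ergodic $(Y,S)$ and $f\in L^2(\nu)$, the claim $\frac1N\sum_{k\le N}f(S^{a_k}y)\to\int f\,d\nu$ (for $\nu$-a.e.\ $y$) to the single assertion that $\frac1M\sum_{n\le M}(-1)^{s_2(n)}f(S^ny)\to0$ for $\nu$-a.e.\ $y$. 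The latter follows from the pointwise-good property (which already gives a.e.\ convergence of these averages) together with the classical fact that the Thue--Morse sequence has continuous spectral measure, so $\frac1M\sum_{n\le M}(-1)^{s_2(n)}\beta^n\to0$ for every $\beta\in\mathbb T$; this disposes first of eigenfunctions and then, by density and a maximal inequality, of all $f\in L^2(\nu)$. Hence the Morse sequence is very good.
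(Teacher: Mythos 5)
First, note that the paper does not prove this statement at all: it is quoted verbatim from Lesigne \cite{L92}, so your argument has to stand on its own, and as written it has a genuine gap at its central step.

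The gap is the claim that ``van der Corput's inequality \dots upgrades good-weight-ness of all the differences $w_{n+h}\overline{w_n}$ to good-weight-ness of $w_n$ itself.'' No such upgrade exists. Van der Corput bounds $\limsup_N\bigl|\frac1N\sum_{n\le N}u_n\bigr|$ by averaged difference correlations, so it can only ever certify convergence \emph{to zero}; knowing that each difference sequence is a good weight (i.e.\ that the difference-weighted averages \emph{converge}, possibly to nonzero limits) gives you nothing about convergence of $\frac1N\sum_n w_n(g)f(S^ny)$. And here the limits are genuinely nonzero in resonant situations: take $(Y,S,\nu)$ to be the base rotation $(G,\sigma,m_G)$ and $f=\eta\in\widehat G$, so that $\frac1N\sum_n w_n(g)f(S^ny)=\eta(y)\cdot\frac1N\sum_n(\lambda\eta(\alpha))^n\rho_n(g)$, which (for instance when $\rho$ is a quasi-coboundary, a case your theorem must cover) converges to a nonzero limit on a set of $(g,y)$ of positive measure; or take $(Y,S)$ to be the skew product itself and $f$ a character. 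No argument whose only output is an upper bound on a $\limsup$ of absolute values can produce these limits. What is actually needed, and is missing from your outline, is a decomposition of $f$ over the Kronecker factor of the \emph{receiving} system $(Y,S,\nu)$: for $f$ orthogonal to it one proves the averages tend to $0$ (there van der Corput can be used, but one must compute the a.e.\ limits of the $h$-difference averages, identify them in terms of the spectral measure $\sigma_f$ at the frequencies $\eta(\alpha)$, and use continuity of $\sigma_f$ together with a Wiener-type average over $h$ --- not merely ``each difference is a good weight''); for $f$ in the Kronecker part one reduces to eigenfunctions, and then one needs a Wiener--Wintner theorem \emph{for the weight $w_n$ itself}, i.e.\ that for $\mu$-a.e.\ $g$ the averages $\frac1N\sum_n\beta^n\lambda^n\rho_n(g)$ converge simultaneously for all $\beta\in\mathbb{T}$. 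That is not the classical Wiener--Wintner theorem (the sequence $\rho_n(g)$ is not Besicovitch almost periodic --- e.g.\ $(-1)^{s_2(n)}$ has continuous spectral measure); it is a Wiener--Wintner theorem for the abelian extension, which is essentially the substance of Lesigne's result and is assumed away in your sketch.

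Two secondary points. The Morse deduction quietly passes from ``for $\mu$-a.e.\ $x$ the return-time sequence is pointwise good'' to the specific point whose return times give $\{n\colon t_n=1\}$; an a.e.\ statement does not apply to a distinguished point, and one needs the unique-ergodicity/everywhere version (this is precisely why the Morse sequence is stated separately in \cite{L92}). Also the Thue--Morse cocycle over the dyadic odometer is not continuous (it is discontinuous at $\dotsc 111$), so the uniform approximation of $R_h$ by trigonometric polynomials must be replaced by an $L^2$/a.e.\ argument. By contrast, your reduction of ``very good'' to the vanishing of $\frac1M\sum_{n\le M}(-1)^{s_2(n)}f(S^ny)$, via continuity of the Thue--Morse spectral measure, eigenfunctions first and then the Banach principle, is sound \emph{once} pointwise goodness of that particular sequence is established; the difficulty is upstream.
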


\begin{rem}\label{rem:good-sequences-examples}
Similar to the proof of Lemma \ref{lem:sequence-condition-star},
it is not hard to check that the sequences
in Thereoms \ref{pol} and \ref{thm:BKQW05}
satisfy Condition ($*$).
\end{rem}

\medskip

\section{Proof of the main theorem}
\subsection{Dynamical systems with positive entropy}
Let $(X,T)$ be a topological dynamical system, $\mu\in\mathcal{M}(X,T)$,
and $\mathcal{B}_{\mu}$ the completion of $\mathcal{B}_X$ under $\mu$.
Then $(X,\mathcal{B}_\mu,\mu,T)$ is a Lebesgue system.
Let $P_\mu(T)$ be the Pinsker $\sigma$-algebra of $(X,\mathcal{B}_\mu,\mu,T)$
and \[\mu=\int_X \mu_x d \mu(x)\]
the disintegration of $\mu$ over $P_\mu(T)$.
Under the above settings, we first state the following lemma.

\begin{lem}[{\cite[Lemma 3.1]{HLY14}}] \label{lem:HYL14}
If $\mu\in\mathcal{M}^e(X,T)$ and $h_\mu(T)>0$,
then for $\mu$-a.e. $x\in X$,
\begin{align*}
\overline{W^s(x,T)\cap \supp(\mu_x)}=\supp(\mu_x) \text{ and }
\overline{W^u(x,T)\cap\supp(\mu_x)}=\supp(\mu_x).
\end{align*}
\end{lem}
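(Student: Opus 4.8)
The plan is to use the Rokhlin--Sinai generator together with the Martingale theorem. Since $P_\mu(T)=P_\mu(T^{-1})$, the disintegration $\mu=\int\mu_x\,d\mu(x)$ over the Pinsker $\sigma$-algebra is the same for $T$ and for $T^{-1}$, so the assertion about $W^u$ follows from the one about $W^s$ upon replacing $T$ by $T^{-1}$ (note $W^u(x,T)=W^s(x,T^{-1})$); thus it suffices to treat $W^s$. Fix a sub-$\sigma$-algebra $\mathcal P$ as in the Rokhlin--Sinai theorem: $T^{-1}\mathcal P\subset\mathcal P$, $\bigvee_{k\ge 0}T^k\mathcal P=\mathcal B_\mu$ and $\bigcap_{k\ge 0}T^{-k}\mathcal P=P_\mu(T)$. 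For $N\ge 0$ let $\mu=\int\mu_x^{(N)}\,d\mu(x)$ be the disintegration over $T^{-N}\mathcal P$; thus $\mu_x^{(0)}$ is the disintegration over $\mathcal P$, and since $P_\mu(T)\subset T^{-N}\mathcal P$ each $\mu_x^{(N)}$ refines $\mu_x$, so $\supp\mu_x^{(N)}\subset\supp\mu_x$ for $\mu$-a.e.\ $x$. I shall also use the standard equivariance $(T^n)_*\mu_x^{(N)}=\mu_{T^nx}^{\,T^{n-N}\mathcal P}$. (If $h_\mu(T)=0$ then $P_\mu(T)=\mathcal B_\mu$, $\mu_x=\delta_x$, and the statement is trivial, so one may assume $h_\mu(T)>0$; positive entropy is not actually needed for the conclusion.)

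The argument has two parts. First, the $T^{-N}\mathcal P$-fibers exhaust the Pinsker fiber: because $T^{-N}\mathcal P$ decreases to $P_\mu(T)$, the Martingale theorem gives $E(f\mid T^{-N}\mathcal P)\to E(f\mid P_\mu(T))$ $\mu$-a.e.\ for each $f\in C(X)$, and testing against a countable dense subset of $C(X)$ shows $\mu_x^{(N)}\to\mu_x$ weakly as $N\to\infty$ for $\mu$-a.e.\ $x$. Since $\mu_x^{(N+1)}=\int\mu_z^{(N)}\,d\mu_x^{(N+1)}(z)$ and $x\in\supp\mu_x^{(N+1)}$ for $\mu$-a.e.\ $x$, the supports increase, $\supp\mu_x^{(N)}\subset\supp\mu_x^{(N+1)}$; applying the portmanteau theorem to the open complement of $\overline{\bigcup_N\supp\mu_x^{(N)}}$ gives, for $\mu$-a.e.\ $x$,
\[
\overline{\bigcup_{N\ge 0}\supp\mu_x^{(N)}}=\supp\mu_x .
\]
Second, the $\mathcal P$-fibers lie in stable sets: for $\mu$-a.e.\ $x$, $\mu_x^{(0)}$-a.e.\ $y$ belongs to $W^s(x,T)$; by the equivariance and $T$-invariance of $\mu$ (apply the $N=0$ statement at the $\mu$-generic point $T^Nx$) the same holds with $\mu_x^{(N)}$ in place of $\mu_x^{(0)}$, for every $N$. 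Granting both parts, for each $N$ the set $W^s(x,T)\cap\supp\mu_x\supset\{y\in\supp\mu_x^{(N)}:y\in W^s(x,T)\}$ has full $\mu_x^{(N)}$-measure in $\supp\mu_x^{(N)}$, hence is dense there; taking the union over $N$ and closing up gives $\overline{W^s(x,T)\cap\supp\mu_x}\supset\overline{\bigcup_N\supp\mu_x^{(N)}}=\supp\mu_x$, the reverse inclusion being trivial.

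The main obstacle is the second part, i.e.\ that for $\mu$-a.e.\ $x$, $\mu_x^{(0)}$-a.e.\ $y$ is forward asymptotic to $x$. The mechanism is that $\mathcal P$-atoms are ``forward negligible'': since $\bigvee_{k\ge 0}T^k\mathcal P=\mathcal B_\mu$, the Martingale theorem gives $\mu_w^{\,T^k\mathcal P}\to\delta_w$ weakly for $\mu$-a.e.\ $w$, so $\diam\supp\bigl(\mu_w^{\,T^k\mathcal P}\bigr)\to 0$ as $k\to\infty$; and by equivariance $T^n\bigl(\supp\mu_x^{(0)}\bigr)=\supp\bigl(\mu_{T^nx}^{\,T^n\mathcal P}\bigr)$, whence $d(T^nx,T^ny)\le\diam T^n\bigl(\supp\mu_x^{(0)}\bigr)$ for every $y\in\supp\mu_x^{(0)}$. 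The delicate point is that the convergence $\diam\supp(\mu_w^{\,T^k\mathcal P})\to 0$ is in the index $k$ for a \emph{fixed} base point, whereas here the base point $T^nx$ and the index $n$ move together; the $L^1(\mu)$ bounds from the martingale (convergence to $0$ of the $\mu_x^{(0)}$-average of $|f\circ T^n-E(f\circ T^n\mid\mathcal P)|$) only yield this diameter estimate along a subsequence of $n$'s, and upgrading it to the full sequence, for $\mu$-a.e.\ $x$, is the technical heart of the proof. I would try to obtain it by combining the monotonicity $\supp\mu_x^{\,T^{k+1}\mathcal P}\subset\supp\mu_x^{\,T^k\mathcal P}$ with an Egorov-type uniformization of the rate and Birkhoff's theorem applied to the exceptional sets; alternatively one can invoke the known structure of asymptotic pairs in positive-entropy systems (in the spirit of Blanchard--Host--Ruette) to supply precisely this input.
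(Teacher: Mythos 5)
The paper itself offers no proof of Lemma \ref{lem:HYL14}: it is quoted verbatim from \cite[Lemma 3.1]{HLY14}. Your skeleton does reproduce the standard route to that lemma: disintegrate over $T^{-N}\mathcal P\downarrow P_\mu(T)$, use the martingale theorem plus a portmanteau argument to get that $\bigcup_{N}\supp\mu_x^{(N)}$ is dense in $\supp\mu_x$, reduce $W^u$ to $W^s$ via $T^{-1}$ and $P_\mu(T)=P_\mu(T^{-1})$, and feed in the statement that the fibers of the Rokhlin--Sinai $\sigma$-algebra consist ($\mu$-a.e.) of points forward asymptotic to the base point. The problem is that this last statement --- for $\mu$-a.e.\ $x$, $\mu_x^{\mathcal P}$-a.e.\ $y\in W^s(x,T)$ --- is the entire nontrivial content of the lemma: it is the only place where the one-sided structure $T^{-1}\mathcal P\subset\mathcal P$ is really exploited, and it is essentially the asymptotic-pair theorem of Blanchard--Host--Ruette \cite{BHS} (used also in \cite{HLY14}). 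You do not prove it; you correctly note that the soft estimate $\int\bigl|\int f(T^ny)\,d\mu_x^{\mathcal P}(y)-f(T^nx)\bigr|\,d\mu(x)\to0$ only yields a.e.\ convergence along a subsequence of $n$'s, and you leave the upgrade to all $n$ as a plan.

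The plan, as described, cannot close the gap. An Egorov set $E$ with $\mu(E)<\delta$ off which the fiberwise convergence is uniform, combined with Birkhoff's theorem, only controls the times $n$ with $T^nx\notin E$; the orbit still enters $E$ infinitely often (with positive frequency comparable to $\mu(E)$), and at those times you have no bound on $d(T^nx,T^ny)$. Shrinking $\delta$ does not help, since for each fixed $\delta$ the uncontrolled times remain infinite. So this route gives at best convergence in density (a ``mean asymptotic'' statement), which does not place $y$ in $W^s(x,T)$ --- and genuine asymptoticity is what the lemma asserts and what Theorem \ref{thm-1} uses, since there the averages are taken along an arbitrary good sequence $\{a_k\}$ that could live on the uncontrolled set of times. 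Similarly, the auxiliary claim $\diam\supp\mu_w^{T^k\mathcal P}\to0$ does not follow from the weak convergence $\mu_w^{T^k\mathcal P}\to\delta_w$ together with nestedness of supports, so even the fixed-base-point step needs an argument. Your fallback --- ``invoke the known structure of asymptotic pairs in the spirit of Blanchard--Host--Ruette'' --- is precisely the missing theorem rather than a proof of it; quoting it (or quoting \cite[Lemma 3.1]{HLY14} wholesale, as the paper does) would complete your argument, but as a self-contained proof the proposal has a genuine gap, and the portion you do establish (the martingale/portmanteau density of $\bigcup_N\supp\mu_x^{(N)}$ and the $W^u$ reduction) is the easy half.
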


\medskip

By Theorem \ref{thm-factor-Pinkser} and the variational principle,
our main result (Theorem~\ref{thm-main-result}) follows from
Theorem \ref{thm-1}:

\begin{thm}\label{thm-1}
Let $(X,T)$ be a topological dynamical system and $\mu\in\mathcal{M}^e(X,T)$ with $h_\mu(T)>0$.
If $\{a_k\}$ is a pointwise good sequence and
the Pinsker $\sigma$-algebra $P_\mu(T)$ is a characteristic $\sigma$-algebra for the sequence $\{a_k\}$,
then for $\mu$-a.e. $x\in X$, there exists a Cantor
subset $K_x\subset\overline{W^s(x,T)}\cap\overline{W^u(x,T)}$
satisfying that
for every integer $n\geq2$ and pairwise distinct points $x_1,x_2,\dotsc,x_n$ in $K_x$
we have
\[\liminf_{N\to\infty}\frac{1}{N}\sum_{k=1}^N\max_{1\leq i<j\leq n} d(T^{a_k}x_i,T^{a_k}x_j)=0\]
and
\[\limsup_{N\to\infty}\frac{1}{N}\sum_{k=1}^N\min_{1\leq i<j\leq n} d(T^{a_k}x_i,T^{a_k}x_j)
\geq \eta_{x,n},\]
where $\eta_{x,n}$ is a positive constant depending only on $x$ and $n$.
\end{thm}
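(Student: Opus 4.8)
The plan is to follow the strategy of \cite{HLY14}, replacing the iterates $T^k$ by $T^{a_k}$ and feeding in the ergodic-average convergence supplied by Theorems~\ref{thm:tau-x}--\ref{thm:Pinsker-constant-atom}. The starting point is Lemma~\ref{lem:HYL14}: for $\mu$-a.e.\ $x$ the stable and unstable sets of $x$ are dense in $\supp(\mu_x)$, and since $h_\mu(T)>0$ the set $\supp(\mu_x)$ is a perfect set (this is where positive entropy is essential: the conditional measures $\mu_x$ over the Pinsker $\sigma$-algebra are non-atomic on a set of full measure). Fix such a good point $x$ and write $Y=\supp(\mu_x)$, $\nu=\mu_x$. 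I would like to construct, inside $Y$, a Cantor set $K_x$ whose distinct tuples are ``asymptotically proximal along $\{a_k\}$ in mean'' (the $\liminf$ condition) yet ``not mean asymptotic'' (the $\limsup$ condition).

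First I would set up the $\liminf$ (proximality) half. Using density of $W^s(x,T)\cap Y$ and $W^u(x,T)\cap Y$ in $Y$, together with a Kuratowski--Mycielski / Banach--Mazur type argument, one produces points that are stable-asymptotic; because $d(T^{a_k}y,T^{a_k}z)\to0$ whenever $z\in W^s(y,T)$ and $a_k\to\infty$, any Cesàro average of $\max_{i<j} d(T^{a_k}x_i,T^{a_k}x_j)$ along a suitable subsequence $N\to\infty$ can be forced to $0$ once the $x_i$ are built to be mutually stably (or unstably) related at the appropriate scales; this uses only that $\{a_k\}$ is increasing, hence $a_k\to\infty$, and a standard inductive scheme refining a Cantor construction. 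The more delicate $\limsup$ (``mean distality'') half is where the pointwise-good hypothesis and Theorem~\ref{thm:Pinsker-constant-atom} enter: the idea is to choose two disjoint closed sets $U_0,U_1\subset Y$ of positive $\nu$-measure, and arrange the Cantor set $K_x$ so that for distinct $x_i,x_j$ one has, along a density-one set of $k$'s, the points $T^{a_k}x_i$ and $T^{a_k}x_j$ landing in $U_0$ and $U_1$ respectively with definite frequency. Concretely, one applies Theorem~\ref{thm:tau-x} to the product system or to the conditional measure $\nu$ and uses that, by Theorem~\ref{thm:Pinsker-constant-atom}, $\int f\,d\tau_y=\int(\int f\,d\tau_w)\,d\nu(w)$, so the limiting frequency of visits to a fixed open set is, on average over the atom, the same as the ambient one — this prevents the average from collapsing and produces the positive lower bound $\eta_{x,n}$, which depends on $\nu(U_0),\nu(U_1)$ and on $\operatorname{dist}(U_0,U_1)$, hence only on $x$ and $n$.

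The technical heart is to reconcile these two requirements in a single Mycielski-type inductive construction (Theorem~\ref{Myc}): at stage $m$ one has a finite collection of small balls, and one must split each ball into two while (a) keeping all chosen representatives pairwise stably-and-unstably shadowing at ever finer scales, to kill $\max$-averages along a chosen subsequence, and (b) keeping a positive-frequency ``separation event'' available for every pair, to keep $\min$-averages bounded below. Step (b) is where I expect the main obstacle: for a single pair of points the separation frequency is controlled by the ergodic averages above, but one needs this \emph{uniformly} over the $2^m$ branches at stage $m$ and for all arities $n$ simultaneously, and the events witnessing separation must be chosen \emph{before} one knows which subsequence $N_j\to\infty$ will be used to evaluate the $\liminf$. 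The resolution, as in \cite{HLY14}, is to fix a nested sequence of ``Rohlin-type'' towers / a refining sequence of partitions adapted to $P_\mu(T)$, run the two subsequences (one for $\liminf$, one for $\limsup$) along disjoint blocks of integers, and invoke Theorem~\ref{thm:Pinsker-constant-atom} to guarantee that on each atom the relevant frequencies are the prescribed constants; the positivity of $h_\mu(T)$ re-enters to ensure the tower data genuinely separates points inside $\supp(\mu_x)$. Finally, passing from the countable union of Cantor sets given by Theorem~\ref{Myc} to a single Cantor set, and from the a.e.\ statement about $x$ to the full conclusion, is routine and identical to \cite{HLY14}; combining with Theorem~\ref{thm-factor-Pinkser} and the variational principle then yields Theorem~\ref{thm-main-result}.
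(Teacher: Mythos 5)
Your outline of the $\liminf$ half (stable/unstable sets dense in $\supp(\mu_x)$ via Lemma \ref{lem:HYL14}, $a_k\to\infty$, so tuples drawn from one stable set have vanishing averaged $\max$-distances) is sound and matches the paper, which records this as the statement that the set $MP_n(X,T)$ is a dense $G_\delta$ in $\supp(\mu_x)^n$. The genuine gap is in the $\limsup$ half. Your mechanism -- each orbit $T^{a_k}x_i$ visiting two fixed disjoint sets $U_0,U_1\subset\supp(\mu_x)$ of positive $\mu_x$-measure with the ``ambient'' frequency -- only controls each coordinate separately; Theorems \ref{thm:tau-x} and \ref{thm:Pinsker-constant-atom} applied to $(X,\mu,T)$ alone give no lower bound on the frequency of the \emph{simultaneous} event $T^{a_k}x_i\in U_0$, $T^{a_k}x_j\in U_1$. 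The visits could be perfectly synchronized (e.g.\ $x_j$ asymptotic to $x_i$), in which case $\min_{i<j}d(T^{a_k}x_i,T^{a_k}x_j)$ averages to $0$ even though every single point equidistributes exactly as prescribed. Your proposed repair (Rohlin-type towers adapted to $P_\mu(T)$, running the two requirements along disjoint blocks, uniformity over the $2^m$ branches) is exactly the step you flag as the main obstacle, and it is not carried out; as stated it does not produce the constant $\eta_{x,n}$ valid for \emph{all} pairs of distinct points of $K_x$.

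What closes this gap in the paper is a different and cleaner device: pass to the $n$-fold product and the conditionally independent self-joining $\lambda_n=\int_X\mu_x^{(n)}\,d\mu(x)$ over the Pinsker $\sigma$-algebra. Since $\mu$ is ergodic with $h_\mu(T)>0$, the fibers $\mu_x$ are non-atomic, $\lambda_n$ is $T^{(n)}$-ergodic and $\lambda_n(\Delta^{(n)})=0$ (\cite[Lemma 5.4]{HY06}). Applying Theorem \ref{thm:tau-x} to $(X^n,T^{(n)},\lambda_n)$ and $f=\min_{1\le i<j\le n}d(x_i,x_j)$ gives $\lambda_n$-a.e.\ convergence of the averages along $\{a_k\}$ to $\int f\,d\tau_{(x_1,\dots,x_n)}>0$; the decisive input is \cite[Theorem 4]{GTW00}, which identifies the Pinsker $\sigma$-algebra of $(X^n,\lambda_n,T^{(n)})$ with $\pi^{-1}(P_\mu(T))$, so that $x\mapsto\mu_x^{(n)}$ is precisely the Pinsker disintegration of $\lambda_n$ and Theorem \ref{thm:Pinsker-constant-atom}, applied on the product, makes the limit a constant $\eta_{x,n}>0$ for $\mu_x^{(n)}$-a.e.\ tuple. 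Consequently the set $D_{n,\eta_{x,n}}(X,T)$ of tuples whose $\limsup$ of $\min$-averages is at least $\eta_{x,n}$ has full $\mu_x^{(n)}$-measure and is $G_\delta$, hence dense $G_\delta$ in $\supp(\mu_x)^n$; intersecting with $MP_n(X,T)$ over all $n$ and invoking Mycielski's theorem (Theorem \ref{Myc}) inside the perfect set $\supp(\mu_x)$ yields $K_x$ at once -- no branch-by-branch inductive Cantor construction or block decomposition of the time axis is needed. Without this passage to $\lambda_n$ and the Pinsker identification on $X^n$ (or some substitute giving joint, not marginal, control), the positive lower bound in the theorem does not follow from your argument.
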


\begin{proof}
Let \[\mu=\int_X\mu_xd\mu(x)\] be
the disintegration of $\mu$ over $P_\mu(T)$.
By Lemma~\ref{lem:HYL14}, there exists a Borel subset $X_1$ of $X$ with $\mu(X_1)=1$
such that for any $x\in X_1$,
\begin{align*}
\overline{W^s(x,T)\cap \supp(\mu_x)}=\supp(\mu_x) \,\text{ and }\,
\overline{W^u(x,T)\cap\supp(\mu_x)}=\supp(\mu_x).
\end{align*}
Fix an integer $n\geq 2$.
Set
\[MP_n(X,T)=
\Bigl\{(x_1,x_2,\dotsc,x_n)\in X^n\colon
\liminf_{N\to\infty}\frac{1}{N}\sum_{k=1}^N\max_{1\leq i<j\leq n} d(T^{a_k}x_i,T^{a_k}x_j)=0\Bigr\}.\]
By a similar argument as in \cite[Lemma 2.4]{HLY14}, $MP_n(X,T)$ is a $G_\delta$ subset of $X^n$.
It is clear that $(x_1,x_2,\dotsc,x_n)\in MP_n(X,T)$ for any $x_1,x_2,\dotsc,x_n$ in $W^s(x,T)$.
Thus, for each $x\in X_1$, $MP_n(X,T)\cap\supp(\mu_x)^n$ is a dense $G_\delta$ subset of $\supp(\mu_x)^n$.

Define a measure $\lambda_n$ on $(X^n,T^{(n)})$ by
\[\lambda_n=\int_X \mu_x^{(n)} d \mu (x),\]
where $\mu_x^{(n)}=\mu_x\times\mu_x\times\dotsb\times\mu_x$ ($n$-times) and
$T^{(n)}=T\times T\times\cdots\times T$ ($n$-times).
As $\mu$ is ergodic and has positive entropy, ${\mu}_x$ is non-atomic for $\mu$-a.e. $x\in X$
and
$\lambda_n$ is a $T^{(n)}$-invariant ergodic measure on $X^{n}$
(see \cite[Lemma 5.4]{HY06}).
Since $\mu_x$ is non-atomic for $\mu$-a.e. $x\in X$,
by the Fubini theorem, $\lambda_n(\Delta^{(n)})=0$.
By Theorem~\ref{thm:tau-x}, there exists a disintegration of $\lambda_n$
with respect to the sequence $\{a_k\}$:
\[\lambda_n=\int \tau_{(x_1,x_2,\dotsc,x_n)}d\lambda_n(x_1,x_2,\dotsc,x_n).\]
Consider the continuous function
\[f\colon X^n\to \mathbb{R},\quad (x_1,x_2,\dotsc,x_n)\mapsto \min\{d(x_i,x_j)\colon 1\leq i<j\leq n\}.\]
By Theorem \ref{thm:tau-x}, we have
\begin{align*}
\lim_{N\rightarrow \infty}
\frac{1}{N}\sum_{k=1}^{N} f\Bigl((T^{(n)})^{a_k}(x_1,x_2,\dotsc,x_n)\Bigr)&=
\lim_{N\to\infty}\frac{1}{N}\sum_{k=1}^N\min_{1\leq i<j\leq n} d(T^{a_k}x_i,T^{a_k}x_j)\\
&=\int f d\tau_{(x_1,x_2,\dotsc,x_n)}.
\end{align*}
Since $\lambda_n(\Delta^{(n)})=0$, $\tau_{(x_1,x_2,\dotsc,x_n)}(\Delta^{(n)})=0$
for $\lambda_n$-a.e. $(x_1,x_2,\dotsc,x_n)$.
Note that for any $(x_1,x_2,\dotsc,x_n)\not\in\Delta^{(n)}$,
$f(x_1,x_2,\dotsc,x_n)>0$.
Thus, we have
$\int fd\tau_{(x_1,x_2,\dotsc,x_n)}>0$
for $\lambda_n$-a.e. $(x_1,x_2,\dotsc,x_n)$.

Let $\pi\colon X^n\to X$, $(x_1,x_2,\dotsc,x_n)\mapsto x_1$ be the canonical projection to
the first coordinate.
By \cite[Theorem~4]{GTW00} (see also \cite[Lemma 4.2]{HXY15}), we know that
the Pinsker $\sigma$-algebra of $(X^n,\lambda_n,T^{(n)})$ equals $\pi^{-1}(P_\mu(T))\pmod {\lambda_n}$.
So
\[\lambda_n=\int_X \mu_x^{(n)} d \mu (x)\]
can be also regarded as the disintegration of $\lambda$ over the Pinsker $\sigma$-algebra
of $(X^n,\lambda_n,T^{(n)})$.
By Theorem~\ref{thm:Pinsker-constant-atom}, we have that for $\mu$-a.e. $x\in X$,
$\int f d\tau_{(x_1,x_2,\dotsc,x_n)}$ is a constant for
$\mu_x^{(n)}$-a.e. $(x_1,x_2,\dotsc,x_n)\in X^n$.
More specifically, there exists a Borel subset $X_2$ of $X$ with $\mu(X_2)=1$ such
that for any $x\in X_2$,
\[\lim_{N\to\infty}\frac{1}{N}\sum_{k=1}^N\min_{1\leq i<j\leq n} d(T^{a_k}x_i,T^{a_k}x_j)=\eta_{x,n}\]
for $\mu_x^{(n)}$-a.e. $(x_1,x_2,\dotsc,x_n)\in X^n$ and some constant $\eta_{x,n}>0$.
Put
\[D_{n,\eta}(X,T)=\Bigl\{(x_1,x_2,\dotsc,x_n)\in X^n\colon
\limsup_{N\to\infty}\frac{1}{N}\sum_{k=1}^N\min_{1\leq i<j\leq n} d(T^{a_k}x_i,T^{a_k}x_j)\geq\eta\Bigr\}.\]
Similar to \cite[Lemma 2.4]{HLY14}, $D_{n,\eta}(X,T)$ is a $G_\delta$ subset of $X^n$.
Then for each $x\in X_2$,
$D_{n,\eta_{x,n}}(X,T) \cap \supp(\mu_x)^n$ is a dense $G_\delta$ subset of $\supp(\mu_x)^n$.

As $n\geq2$ is arbitrary, there exists a Borel subset $X_0$ of $X$ with $\mu(X_0)=1$ such that
for every integer $n\ge2$ and $x\in X_0$,
$MP_n(X,T)\cap D_{n,\eta_{x,n}}(X,T) \cap \supp(\mu_x)^n$ is a dense $G_\delta$ subset of $\supp(\mu_x)^n$.
Since $\mu_x$ is non-atomic for $\mu$-a.e.
$x\in X$, we can also require that $\mu_x$ is non-atomic
for every $x\in X_0$. Then for each $x\in X_0$,
$\supp(\mu_x)$ is a perfect closed subset of $X$.
By Mycielski's theorem, there exists a Cantor subset $K_x$ of $\supp(\mu_x)$
such that for every integer $n\geq2$,
$$K_x^n\subset (MP_n(X,T)\cap D_{n,\eta_{x,n}}(X,T))\cup\Delta^{(n)}.$$
Thus, $K_x$ is as required.
\end{proof}

\medskip

Recall that the Pinsker factor of a Kolmogorov system is trivial.
Theorem~\ref{thm-1} yields the following.

\begin{cor}
Let $\{a_k\}$ be a pointwise good sequence satisfying Condition ($*$).
Suppose that $(X,T)$ is a topological dynamical system and
there is an invariant measure $\mu$ such that
$\supp(\mu)=X$ and $(X,\mathcal{B},\mu,T)$ is a Kolmogorov system.
Then there exists a Cantor subset $K$ of $X$ and a sequence $\{\eta_n\}$ of positive numbers
such that for every integer $n\geq2$ and pairwise distinct points $x_1,x_2,\dotsc,x_n$ in $K$,
it holds that
\[\liminf_{N\to\infty}\frac{1}{N}\sum_{k=1}^N\max_{1\leq i<j\leq n} d(T^{a_k}x_i,T^{a_k}x_j)=0\]
and
\[\limsup_{N\to\infty}\frac{1}{N}\sum_{k=1}^N\min_{1\leq i<j\leq n} d(T^{a_k}x_i,T^{a_k}x_j)
\geq \eta_{n}.\]
\end{cor}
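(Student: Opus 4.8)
The plan is to obtain this corollary directly from Theorem~\ref{thm-1}, exploiting that a Kolmogorov system has trivial Pinsker $\sigma$-algebra. First I would unwind the Kolmogorov hypothesis: since $(X,\mathcal{B},\mu,T)$ is Kolmogorov, $P_\mu(T)$ is trivial $\pmod{\mu}$, so $\mu$ is ergodic, and (discarding the degenerate case where $X$ is a single point, for which there is nothing to say) a nontrivial Kolmogorov system has $h_\mu(T)>0$ and $\mu$ non-atomic. Triviality of $P_\mu(T)$ also makes the disintegration $\mu=\int_X\mu_x\,d\mu(x)$ over $P_\mu(T)$ trivial, i.e.\ $\mu_x=\mu$ for $\mu$-a.e.\ $x$, so $\supp(\mu_x)=\supp(\mu)=X$ for $\mu$-a.e.\ $x$. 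Since $\{a_k\}$ satisfies Condition~($*$), Theorem~\ref{thm-factor-Pinkser} shows that $P_\mu(T)$ is a characteristic $\sigma$-algebra for $\{a_k\}$, so all the hypotheses of Theorem~\ref{thm-1} hold; it yields, for $\mu$-a.e.\ $x\in X$, a Cantor set $K_x\subseteq\overline{W^s(x,T)}\cap\overline{W^u(x,T)}\subseteq X$ such that for every $n\geq2$ and pairwise distinct $x_1,\dots,x_n\in K_x$ one has $\liminf_{N\to\infty}\frac1N\sum_{k=1}^N\max_{1\leq i<j\leq n}d(T^{a_k}x_i,T^{a_k}x_j)=0$ and $\limsup_{N\to\infty}\frac1N\sum_{k=1}^N\min_{1\leq i<j\leq n}d(T^{a_k}x_i,T^{a_k}x_j)\geq\eta_{x,n}$.

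The only substantive point is that the constants $\eta_{x,n}$ produced by Theorem~\ref{thm-1} can be chosen independent of $x$; everything else is bookkeeping. To see this I would reopen the proof of Theorem~\ref{thm-1}: there the relevant measure on $X^n$ is $\lambda_n=\int_X\mu_x^{(n)}\,d\mu(x)$, which here equals the product measure $\mu^{n}=\mu\times\dots\times\mu$ because $\mu_x=\mu$ for $\mu$-a.e.\ $x$, and the Pinsker $\sigma$-algebra of $(X^n,\mu^n,T^{(n)})$ equals $\pi^{-1}(P_\mu(T))\pmod{\mu^n}$ and is therefore trivial. With $f(x_1,\dots,x_n)=\min_{1\leq i<j\leq n}d(x_i,x_j)$, Theorem~\ref{thm:Pinsker-constant-atom} then forces $\int f\,d\tau_{(x_1,\dots,x_n)}$ to be $\mu^n$-a.e.\ equal to the single constant $\eta_n:=\int f\,d\mu^n$, which is positive since $\mu$ is non-atomic (so $\mu^n(\Delta^{(n)})=0$, while $f>0$ off $\Delta^{(n)}$). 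Hence in Theorem~\ref{thm-1} one may take $\eta_{x,n}=\eta_n$ for $\mu$-a.e.\ $x$ and all $n\geq2$.

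To finish, I would fix a single $x$ in the (countable) intersection of the full-measure sets supplied above, set $K:=K_x$ (a Cantor subset of $\supp(\mu_x)=X$) and $\{\eta_n\}_{n\geq2}$ as constructed, and simply read off the two displayed conclusions. An equivalent shortcut bypassing Theorem~\ref{thm-1}: the $n$-fold product $(X^n,\mu^n,T^{(n)})$ is again Kolmogorov, so by the corollary at the end of Section~3 (ergodic averages along $\{a_k\}$ converge a.e.\ to the space mean for Kolmogorov systems) the averages of $f$ along $\{a_k\}$ over $(X^n,\mu^n,T^{(n)})$ converge $\mu^n$-a.e.\ to $\eta_n=\int f\,d\mu^n>0$; combined with Lemma~\ref{lem:HYL14} and Mycielski's theorem (Theorem~\ref{Myc}), this assembles $K$ exactly as in the proof of Theorem~\ref{thm-1}. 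Accordingly I expect no genuine obstacle in this corollary; the one thing to be careful about is tracking the various almost-everywhere statements so that a single good base point $x$ survives.
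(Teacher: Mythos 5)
Your proposal is correct and follows essentially the paper's own route: the paper derives this corollary directly from Theorem~\ref{thm-1} together with the triviality of the Pinsker $\sigma$-algebra of a Kolmogorov system (so $\mu_x=\mu$, $\supp(\mu_x)=X$, and the ergodicity/positive-entropy hypotheses hold). Your extra step showing $\eta_{x,n}$ can be taken to be the $x$-independent constant $\int f\,d\mu^n>0$ is exactly the detail the paper leaves implicit, and it is argued correctly via Theorem~\ref{thm:Pinsker-constant-atom} (or the Kolmogorov-product shortcut).
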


\begin{rem}
One of the referees asked the following interesting question:
Does there exist some subsequences
which are bad for mean Li-Yorke chaos in dynamical systems with
positive topological entropy?
In particular, how about the sequence $\{2^n\}$?

It is proved by Bellow in~\cite{B83} that very
fast growing sequences, known as lacunary sequences,
are bad for mean ergodic theorems.
Recall that a positive sequence $\{a_n\}$
 is called \emph{lacunary} if there exists $\lambda > 1$ such
that $\frac{a_{n+1}}{a_n}\geq \lambda$ for all $n\geq 1$.
Clearly the sequence $\{2^n\}$ is lacunary.
Note that the proof of Theorem~\ref{thm-1} strongly depends
on the sequences which are good for pointwise ergodic theorems.
To solve the above question, new technique should be involved.
\end{rem}

\subsection{Non-invertible case}
In this subsection, we generalize Theorem \ref{thm-1} to the non-invertible case.
Let $(X,T)$ be a non-invertible system, meaning that
$X$ is a compact metric space and the map
$T\colon X\to X$ is continuous surjective but not one-to-one.
\begin{thm} \label{thm:non-invertible-case}
Let $(X,T)$ be a non-invertible system and
$\mu\in\mathcal{M}^e(X,T)$ with $h_\mu(T)>0$.
If $\{a_k\}$ is a pointwise good sequence and
the Pinsker $\sigma$-algebra $P_\mu(T)$ is a characteristic $\sigma$-algebra
for the sequence $\{a_k\}$,
then for $\mu$-a.e. $x\in X$, there exists a Cantor subset
$K_x\subset\overline{W^s(x,T)}$ satisfying that
for every integer $n\geq2$ and
pairwise distinct points $x_1,x_2,\dotsc,x_n$ in $K_x$
we have
\[\liminf_{N\to\infty}\frac{1}{N}\sum_{k=1}^N\max_{1\leq i<j\leq n} d(T^{a_k}x_i,T^{a_k}x_j)=0\]
and
\[\limsup_{N\to\infty}\frac{1}{N}\sum_{k=1}^N\min_{1\leq i<j\leq n} d(T^{a_k}x_i,T^{a_k}x_j)
\geq \eta_{x,n},\]
where $\eta_{x,n}$ is a positive constant depending only on $x$ and $n$.
\end{thm}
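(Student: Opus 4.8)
The plan is to reduce the non-invertible case to the invertible one via the natural extension. Given the non-invertible system $(X,T)$ with $\mu\in\mathcal{M}^e(X,T)$ and $h_\mu(T)>0$, form the natural extension $(\widetilde X,\widetilde T)$, where $\widetilde X=\{(x_k)_{k\in\mathbb Z}:Tx_k=x_{k+1}\}$ with the shift $\widetilde T$, and let $\pi\colon\widetilde X\to X$ be projection to the $0$-th coordinate. This is invertible, and by the standard theory there is a unique ergodic $\widetilde\mu\in\mathcal{M}^e(\widetilde X,\widetilde T)$ with $\pi_*\widetilde\mu=\mu$ and $h_{\widetilde\mu}(\widetilde T)=h_\mu(T)>0$. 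Crucially, $\pi$ induces an isomorphism $P_{\widetilde\mu}(\widetilde T)\cong\pi^{-1}(P_\mu(T))$ on the Pinsker $\sigma$-algebras, so if $P_\mu(T)$ is characteristic for $\{a_k\}$, then $P_{\widetilde\mu}(\widetilde T)$ is characteristic for $\{a_k\}$ as well — this follows by pulling back the $L^2$-convergence in the definition of characteristic $\sigma$-algebra through the isomorphism $(\widetilde X,\widetilde{\mathcal B},\widetilde\mu,\widetilde T)$-factor structure.

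Next I would apply Theorem \ref{thm-1} to $(\widetilde X,\widetilde T)$ and $\widetilde\mu$: for $\widetilde\mu$-a.e. $\tilde x\in\widetilde X$ there is a Cantor set $\widetilde K_{\tilde x}\subset\overline{W^s(\tilde x,\widetilde T)}\cap\overline{W^u(\tilde x,\widetilde T)}$ with the stated mean-proximal and mean-asymptotic-separation properties along $\{a_k\}$. The key observation is that $\pi$ is a $1$-Lipschitz-type map in the right sense: if we equip $\widetilde X$ with a compatible metric $\widetilde d$ for which $\widetilde d\ge d\circ(\pi\times\pi)$ on the first coordinate (standard, e.g.\ $\widetilde d((x_k),(y_k))=\sum_{k\ge0}2^{-k}d(x_k,y_k)$ so that $d(\pi\tilde x,\pi\tilde y)\le\widetilde d(\tilde x,\tilde y)$), then $\pi$ maps $W^s(\tilde x,\widetilde T)$ into $W^s(\pi\tilde x,T)$, hence $\pi(\overline{W^s(\tilde x,\widetilde T)})\subset\overline{W^s(\pi\tilde x,T)}$. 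Moreover, because $\widetilde d$ dominates $d$ along the $0$-th coordinate, the inequality
\[\min_{1\le i<j\le n}d(T^{a_k}\pi\tilde x_i,T^{a_k}\pi\tilde x_j)\le\min_{1\le i<j\le n}\widetilde d(\widetilde T^{a_k}\tilde x_i,\widetilde T^{a_k}\tilde x_j)\]
need not hold pointwise, so instead I would read off the $\limsup$ lower bound directly from the disintegration argument inside the proof of Theorem \ref{thm-1} applied to the function $f(x_1,\dots,x_n)=\min_{i<j}d(\pi x_i,\pi x_j)$ on $\widetilde X^n$; this $f$ is still continuous, and the same Pinsker-constancy via Theorem \ref{thm:Pinsker-constant-atom} gives a positive constant $\eta_{\tilde x,n}$ for the push-forward separation, provided $\pi$ restricted to $\mathrm{supp}(\widetilde\mu_{\tilde x})$ is injective on a set of full $\widetilde\mu_{\tilde x}$-measure so that the image is non-atomic — which follows because $\mu_x$ non-atomic forces the conditional measures upstairs to be non-atomic and $\pi$-a.e.\ injective on fibers of the Pinsker factor.

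Concretely, the cleanest route is: rerun the proof of Theorem \ref{thm-1} verbatim on $(\widetilde X,\widetilde T,\widetilde\mu)$ but replace the test function by $f\circ\pi^{(n)}$ where $\pi^{(n)}=\pi\times\cdots\times\pi\colon\widetilde X^n\to X^n$ and $f(x_1,\dots,x_n)=\min_{i<j}d(x_i,x_j)$; the set $MP_n$ is defined with $\widetilde d$ replaced by $d\circ(\pi\times\pi)$ and is still $G_\delta$; Lemma \ref{lem:HYL14} gives $\overline{W^s(\tilde x,\widetilde T)\cap\mathrm{supp}(\widetilde\mu_{\tilde x})}=\mathrm{supp}(\widetilde\mu_{\tilde x})$; and since points of $W^s(\tilde x,\widetilde T)$ project to points whose $T$-orbits converge, they lie in the analogue of $MP_n$. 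Then Mycielski's theorem (Theorem \ref{Myc}) produces a Cantor set $\widetilde K_{\tilde x}\subset\mathrm{supp}(\widetilde\mu_{\tilde x})$ on which both properties hold for the $\pi$-image metric, and $K_{\pi\tilde x}:=\pi(\widetilde K_{\tilde x})$ is the desired Cantor subset of $\overline{W^s(\pi\tilde x,T)}$ — it is a Cantor set because $\pi$ is injective on $\widetilde K_{\tilde x}$ after shrinking (the only obstruction to injectivity is two sequences agreeing in all coordinates $k\ge0$, a condition we can exclude by a further $G_\delta$-genericity requirement, or simply by noting non-atomicity of the projected conditional measure lets us choose $\widetilde K_{\tilde x}$ inside a full-measure set on which $\pi$ is injective).

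The main obstacle I anticipate is the bookkeeping around injectivity of $\pi$ on the Cantor set and the transfer of the \emph{strict positivity} $\eta_{\tilde x,n}>0$ through the projection: one must be sure that $\pi^{(n)}_*\tau_{\tilde x_1,\dots,\tilde x_n}$ does not charge $\Delta^{(n)}\subset X^n$, equivalently that $\widetilde\mu_{\tilde x}$-generic tuples project to genuinely distinct points of $X$. This is exactly where one uses that the Pinsker $\sigma$-algebra upstairs is $\pi^{-1}$ of the one downstairs, so the conditional measures $\widetilde\mu_{\tilde x}$ are supported on single fibers of $P_\mu(T)$ and project to the non-atomic $\mu_x$; hence $\lambda_n=\int\widetilde\mu_{\tilde x}^{(n)}d\widetilde\mu$ satisfies $(\pi^{(n)})_*\lambda_n(\Delta^{(n)})=0$, and the argument of Theorem \ref{thm-1} goes through with $f\circ\pi^{(n)}$ in place of $f$. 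Everything else is a routine adaptation, so the proof can be written as ``repeat the proof of Theorem \ref{thm-1} for the natural extension, using the function $f\circ\pi^{(n)}$ and the identification of Pinsker $\sigma$-algebras,'' followed by the short verification that $\pi$ carries the resulting Cantor set faithfully into $\overline{W^s(x,T)}$.
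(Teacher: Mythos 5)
Your proposal is essentially correct and follows the paper's overall route through the natural extension: lift $\mu$ to an ergodic $\widetilde{\mu}$ with positive entropy, run the positive-entropy machinery upstairs, and push the resulting Cantor set down by $\pi$. Where you differ is the transfer step. The paper applies Theorem~\ref{thm-1} upstairs verbatim (with the extension metric $\tilde{d}$) and then passes to $K_x=\pi(K_{\tilde{x}})$ by an argument ``similar to \cite[Lemma~3.7]{HLY14}'', whereas you rerun the proof of Theorem~\ref{thm-1} on $(\widetilde{X},\widetilde{T},\widetilde{\mu})$ with the test function $f\circ\pi^{(n)}$, so the downstairs averages along $\{a_k\}$ come out directly; your instinct here is sound, since a pointwise inequality bounding the downstairs minimum from below by the upstairs one is indeed unavailable, and for a general sequence a purely metric transfer is delicate (the upstairs distance at time $a_k$ involves downstairs times $a_k-l$ that need not lie in the sequence). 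What your route buys is self-containedness; what it costs is that you must actually justify two assertions you only state: (i) $P_{\widetilde{\mu}}(\widetilde{T})=\pi^{-1}(P_\mu(T))\pmod{\widetilde{\mu}}$, which is true but rests on the classical facts that the Pinsker factor of the natural extension is the natural extension of the Pinsker factor and that the Pinsker factor of $(X,\mu,T)$, having zero entropy, is invertible mod $0$ (so that $\pi^{-1}(P_\mu(T))$ is $\widetilde{T}$-bi-invariant); and (ii) the characteristic property lifts from $(X,\mu,T)$ to $(\widetilde{X},\widetilde{\mu},\widetilde{T})$ (and is available for the product $(\widetilde{X}^n,\lambda_n)$), which needs an approximation of $L^2(\widetilde{\mu})$ by functions of the form $\widetilde{T}^m(g\circ\pi)$ rather than a mere pullback --- to be fair, the paper leaves the same point implicit, and under Condition~($*$) both are immediate from Theorem~\ref{thm-factor-Pinkser} applied upstairs. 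Granting (i), your non-atomicity claim for $\pi_*\widetilde{\mu}_{\tilde{x}}$, hence $\lambda_n\bigl((\pi^{(n)})^{-1}\Delta^{(n)}\bigr)=0$ and the positivity of $\eta_{x,n}$, is correct, although the phrase about $\pi$ being ``a.e.\ injective on fibers'' is neither needed nor implied. Finally, your bookkeeping about injectivity of $\pi$ on the Cantor set is unnecessary: if two distinct points of $\widetilde{K}_{\tilde{x}}$ had the same projection, their projected distances would vanish identically, contradicting the $\limsup$ lower bound they must satisfy, so $\pi$ is automatically injective on $\widetilde{K}_{\tilde{x}}$ and $K_x=\pi(\widetilde{K}_{\tilde{x}})$ is Cantor, exactly as in the paper's proof.
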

\begin{proof}
We consider the natural extension $(\tilde{X}, \tilde{T})$ of $(X, T)$;
that is,
\[\widetilde{X}=\{(x_{1}, x_{2}, \cdots)\in X^{\mathbb{N}}\colon Tx_{i+1}=x_{i},\ i\in \mathbb{N}\}\]
and $\widetilde{T}\colon\widetilde{X}\to \widetilde{X}$,
$(x_{1}, x_{2}, \cdots)\mapsto (Tx_{1}, x_{1}, x_{2},\cdots)$ is the shift homeomorphism.
A compatible metric $\tilde{d}$ on $\widetilde{X}$ is defined by
\[\tilde{d}((x_{1}, x_{2}, \cdots), (y_{1}, y_{2}, \cdots))=
\sum_{i=1}^{\infty}\frac{d(x_{i}, y_{i})}{2^{i}}.\]
Let $\pi\colon \widetilde{X}\to X$, $(x_1,x_2,\dotsc)\mapsto x_1$ be the projection to the first coordinate.
Then $\pi\colon(\widetilde{X},\widetilde{T})\to(X,T)$ is a factor map.
As $\mu$ is an ergodic invariant measure of $(X,T)$,
there exists an ergodic invariant measure $\widetilde{\mu}$ of $(\widetilde{X},\widetilde{T})$
such that $\pi(\widetilde{\mu})=\mu$. Clearly, $h_{\widetilde{\mu}}(\widetilde{T})\geq h_{\mu}(T)>0$.
By Theorem~\ref{thm-1}, there is a Borel subset $\widetilde{X}_{0}$ of $\widetilde{X}$
with $\widetilde{\mu}(\widetilde{X}_{0})=1$ such that for each $\tilde{x}\in\widetilde{X}_{0}$,
there exists a Cantor subset $K_{\tilde{x}}\subset\overline{W^s(\tilde{x},\widetilde{T})}$
satisfying that
for every integer $n\geq2$ and pairwise distinct points
$\tilde{x}_1,\tilde{x}_2,\dotsc,\tilde{x}_n$ in $K_{\tilde{x}}$ one has
\[\liminf_{N\to\infty}\frac{1}{N}\sum_{k=1}^N\max_{1\leq i<j\leq n}
 \tilde{d}(\widetilde{T}^{a_k}\tilde{x}_i,\widetilde{T}^{a_k}\tilde{x}_j)=0\]
and
\[\limsup_{N\to\infty}\frac{1}{N}\sum_{k=1}^N\min_{1\leq i<j\leq n}
\tilde{d}(\widetilde{T}^{a_k}\tilde{x}_i,\widetilde{T}^{a_k}\tilde{x}_j)
\geq \eta_{\tilde{x},n}.\]
Let $X_{0}=\pi(\widetilde{X}_{0})$.
Then $X_{0}$ is a $\mu$-measurable set with $\mu(X_{0})=1$.
For any $x\in X_{0}$,
there exists $\tilde{x}\in\widetilde{X}_{0}$ such that $\pi(\tilde{x})=x$.
Let $K_{x}=\pi(K_{\tilde{x}})$. Then $K_{x}\subset\overline{W^s(x,T)}$.
Similar to \cite[Lemma~3.7]{HLY14}, for every integer $n\geq2$ there is a positive constant $\eta_{x,n}$
such that for any pairwise distinct points $x_1,x_2,\dotsc,x_n$ in $K_x$ one has
\[\liminf_{N\to\infty}\frac{1}{N}\sum_{k=1}^N\max_{1\leq i<j\leq n} d(T^{a_k}x_i,T^{a_k}x_j)=0\]
and
\[\limsup_{N\to\infty}\frac{1}{N}\sum_{k=1}^N\min_{1\leq i<j\leq n} d(T^{a_k}x_i,T^{a_k}x_j)
\geq \eta_{x,n}.\]
So $K_x$ is as required.
\end{proof}

\medskip

\section*{Acknowledgments}
The first author was supported in part by NSF of China (grant numbers 11401362 and 11471125).
The second author was supported in part by NSF of China (grant numbers 11371339 and 11571335).
The authors would like to thank professor Wen Huang for numerous discussions
on the topics covered by the paper.
The authors would also like to thank
the anonymous referees for their helpful suggestions
concerning this paper.

\medskip\medskip

\begin{thebibliography}{999}
\bibitem{B83}
A. Bellow, \emph{On ``bad universal" sequences in ergodic theory (ii)},
Measure Theory and its Applications
(J. Belley, J. Dubois, and P. Morales, eds.), Lecture Notes in Math.,
vol. 1033, Springer Berlin Heidelberg, 1983, pp. 74--78.

\bibitem{BGKM} F.~Blanchard, E.~Glasner, S.~Kolyada and A.~Maass,
    \emph{On Li-Yorke pairs}, J. Reine Angew. Math.
     \textbf{547} (2002), 51--68.

\bibitem{BHS} F.~Blanchard, B.~Host and S.~Ruette,
        \emph{Asymptotic pairs in positive-entropy systems},
        Ergod. Th. \& Dynam. Sys.
        \textbf{22} (2002),
         no. 3, 671--686.

\bibitem{BH08} F.~Blanchard and W.~Huang, \emph{Entropy sets, weakly mixing sets and entropy capacity},
Discrete Contin. Dyn. Syst. \textbf{20} (2008), no. 2, 275--311.

\bibitem{BKQW05}
M.~Boshernitzan, G.~Kolesnik, A.~Quas and M.~Wierdl,
\emph{Ergodic averaging sequences},  J. Anal. Math. \textbf{95} (2005), 63--103.

\bibitem{B88a}
J.~Bourgain, \emph{An approach to pointwise ergodic theorems},
Geometric aspects of functional analysis (1986/87), Lect. Notes Math. 1317,
Springer, Berlin (1988), 204--223.

\bibitem{B88} J.~Bourgain,
\emph{On the maximal ergodic theorem for certain subsets of the integers},
Israel J. Math. \textbf{61} (1988), no. 1, 39--72.

\bibitem{B89} J.~Bourgain, \emph{Pointwise ergodic theorems for arithmetic sets},
With an appendix by the author, Harry Furstenberg, Yitzhak Katznelson and Donald S. Ornstein,
Inst. Hautes \'Etudes Sci. Publ. Math. No. \textbf{69} (1989), 5--45.

\bibitem{BR69} A.~Brunel and M.~Keane, \emph{Ergodic theorems for operator sequences},
 Z. Wahrscheinlichkeitstheorie und Verw. Gebiete
  \textbf{12} (1969), 231--240.

\bibitem{DL96} J.~Derrien and E.~Lesigne,
\emph{Un th\'{e}or\`{e}me ergodique polyn\^{o}mial ponctuel pour les endomorphismes exacts et les K-syst\`{e}mes},
 Annales de l'I.H.P. Probabilit\'{e}s et statistiques, \textbf{32} (1996), no. 6, 765--778.

\bibitem{D} T.~Downarowicz, \emph{Positive topological entropy implies chaos DC2},
 Proc. Amer. Math. Soc. \textbf{142} (2014), 137--149.

\bibitem{EW} M.~Einsiedler and T.~Ward,
\emph{Ergodic theory with a view towards number theory}, Graduate
Texts in Mathematics, 259. Springer-Verlag London, Ltd., London, 2011.

\bibitem{FHYZ12} C. Fang, W. Huang, Y. Yi and P. Zhang,
\emph{Dimension of stable sets and scrambled sets in positive finite entropy systems},
Ergod. Th. \& Dynam. Sys.
\textbf{32} (2012), no. 2, 599--628.

\bibitem{FW96} H. Furstenberg and B. Weiss,
\emph{A mean ergodic theorem for $(1/N)\sum\sp N\sb {n=1}f(T\sp nx)g(T\sp {n\sp 2}x)$},
Convergence in ergodic theory and probability (Columbus, OH, 1993), 193--227,
Ohio State Univ. Math. Res. Inst. Publ., 5, de Gruyter, Berlin, 1996.

\bibitem{GJ} F. Garcia-Ramos and L. Jin,
\emph{Mean proximality and mean Li-Yorke chaos},
Proc. Amer. Math. Soc. 2016, published online,
https://doi.org/10.1090/proc/13440

\bibitem{Gl1} E.~Glasner, \emph{Ergodic theory via joinings},
Mathematical Surveys and Monographs, 101. American Mathematical
Society, Providence, RI, 2003.

\bibitem{GTW00} E. Glasner, J.-P. Thouvenot and B. Weiss,
\emph{Entropy theory without a past}, Ergod. Th. \& Dynam. Sys. \textbf{20}
(2000), no. 5, 1355--1370.

\bibitem{H09} W. Huang, \emph{Stable sets and $\varepsilon$-stable sets in positive-entropy systems},
Comm. Math. Phys. \textbf{279} (2008), no. 2, 535--557.

\bibitem{HJ15} W. Huang and L. Jin, \emph{Stable sets and mean Li-Yorke chaos in
positive entropy actions of bi-orderable amenable groups},
Ergod. Th. \& Dynam. Sys. \textbf{36} (2016) no. 8, 2482--2497

\bibitem{HLY14} W.~Huang, J.~Li and X.~Ye,
\emph{Stable sets and mean Li-Yorke chaos in positive entropy systems},
 J. Funct. Anal. \textbf{266} (2014), 3377--3394.

\bibitem{HXY15}
W. Huang, L. Xu and Y. Yi, \emph{Asymptotic pairs, stable sets and chaos in positive entropy systems},
 J. Funct. Anal. \textbf{268} (2015), no. 4, 824--846.

\bibitem{HY02}
 W.~Huang and X.~Ye, \emph{Devaney's chaos or 2-scattering implies {L}i-{Y}orke's chaos},
  Topology Appl. \textbf{117} (2002), no. 3, 259--272.

\bibitem{HY06}
     W.~Huang and X.~Ye,
     \emph{A local variational relation and applications}, Israel J. of Math.
     \textbf{151}~(2006), 237--280.

\bibitem{I}
A.~Iwanik, \emph{Independence and scrambled sets for chaotic mappings},
The mathematical heritage of C. F.
Gauss, World Sci. Publ., River Edge, NJ, (1991), pp. 372--378.

\bibitem{J11} A.~del Junco, \emph{Ergodic theorems},
 Mathematics of complexity and dynamical systems. Vols. 1--3, 241--263, Springer, New York, 2012.

\bibitem{KL13} D. Kerr and H. Li, \emph{Combinatorial independence and sofic entropy},
Commun. Math. Stat. \textbf{1} (2013) 213--257.

\bibitem{KL07}  D. Kerr and H. Li, \emph{Independence in topological and C$^*$-dynamics},
Math. Ann. \textbf{338} (2007), no. 4, 869--926.

\bibitem{L92} E.~Lesigne,
\emph{Ergodic theorem along a return time sequence},
Ergodic theory and related topics, III (G\"ustrow, 1990), 146--152,
Lecture Notes in Math. 1514, Springer, Berlin, 1992.

\bibitem{LY16} J. Li and X. Ye, \emph{Recent development of chaos theory in topological dynamics},
Acta Mathematica Sinica \textbf{32} (2016), no. 1, 83--114.

\bibitem{My} J. Mycielski, \emph{Independent sets in topological algebras},
Fund. Math. \textbf{55} (1964), 139--147.

\bibitem{PS95} K.~Petersen and I.~Salama,
\emph{Ergodic theory and its connections with harmonic analysis},
London Mathematical Society Lecture Note Series 205, Cambridge University Press,
1995.

\bibitem{RS61}
V. A. Rokhlin and Ya. G. Sinai, \emph{The construction and properties of invariant measurable partitions},
Dokl. Akad. Nauk SSSR \textbf{141} (1961), no. 5, 1038--1041.

\bibitem{PW}
P. Walters, \emph{An Introduction to Ergodic Theory},
Graduate Texts in Mathematics, vol. 79. New York-Berlin:
Springer-Verlag, 1982


\end{thebibliography}
\end{document}